\numberwithin{equation}{section}
\newcommand{\ip}[1] {\langle #1 \rangle }
\newcommand{\norm}[1] {\left \| #1 \right \|}
\newcommand{\inclu}[0] {\ar@{^{(}->}}
\newcommand{\dist}{{\rm dist}}
\newcommand{\R}{\mathbb{R}}
\newcommand{\EE}{\mathbb{E}}
\newcommand{\PP}{\mathbb{P}}
\newcommand{\sign}{\mathrm{sign}}
\newcommand{\RR}{\mathbb{R}}
\newcommand{\pfail}{p_{\mathrm{fail}}}
\newcommand{\cX}{\mathcal{X}}
\newcommand{\cS}{\mathcal{S}}
\newcommand{\sphere}{\mathbb{S}}
\newcommand{\proj}{\mathrm{proj}}
\newcommand{\cT}{\mathcal{T}}
\newcommand{\lipsymb}{\mathsf{L}}
\newcommand{\cF}{\mathcal{F}}
\newcommand{\inneridx}{k}
\newcommand{\INNERIDX}{K}
\newcommand{\outeridx}{t}
\newcommand{\OUTERIDX}{T}
\newcommand{\quadapprox}{\eta}
\newcommand{\weakphase}{\tilde \eta}
\newcommand{\SMBM}{\mathrm{MBA}}
\newcommand{\SMBMSC}{\mathrm{PMBA}}
\newcommand{\SMBMSCE}{\mathrm{EPMBA}}
\newcommand{\RSMBMSCE}{\mathrm{RPMBA}}
\newcommand{\RSMBM}{\mathrm{RMBA}}
\newcommand{\TRIALCOUNT}{M}
\newcommand{\strongconvex}{\nu}
\newcommand{\isconv}{\texttt{is\_conv}}
\newcommand{\true}{\texttt{true}}
\newcommand{\false}{\texttt{false}}
\newcommand{\data}{z}
\newcommand{\initqual}{\delta_1}
\newcommand{\initquallem}{\delta}
\newcommand{\stepprob}{\delta_2}
\newcommand{\rank}{\mathrm{rank}}
\newcommand{\bern}{u}
\newcommand{\blindnorm}{D}
\newcommand{\allasumption}{\ref{assumption:sample}-\ref{assumption:lipschitz} }
\newcommand{\abs}[1]{\left| #1 \right|}
\newcommand{\argmin}{\operatornamewithlimits{argmin}}
\newcommand{\NN}{\mathbb{N}}
\newtheorem{thm}{Theorem}[section]
\newtheorem{definition}[thm]{Definition}
\newtheorem{proposition}[thm]{Proposition}
\newtheorem{lem}[thm]{Lemma}
\newtheorem{cor}[thm]{Corollary}
\newtheorem{assumption}{Assumption}
\newtheorem{remark}{Remark}
\newtheorem{example}{Example}[section]
\DeclarePairedDelimiter{\dotp}{\langle}{\rangle}
\begin{document}
	
	\title{Stochastic algorithms with geometric step decay converge linearly on sharp functions}
	%\subtitle{Do you have a subtitle?\\ If so, write it here}
	
	%\titlerunning{Short form of title}        % if too long for running head
	
\author{Damek Davis\thanks{School of ORIE, Cornell University,
		Ithaca, NY 14850, USA;
		\texttt{people.orie.cornell.edu/dsd95/}.}\qquad Dmitriy Drusvyatskiy\thanks{Department of Mathematics, U. Washington,
		Seattle, WA 98195; Microsoft Research, Redmond, WA 98052;  \texttt{www.math.washington.edu/{\raise.17ex\hbox{$\scriptstyle\sim$}}ddrusv}. Research of Drusvyatskiy was supported by the NSF DMS   1651851 and CCF 1740551 awards.} \qquad Vasileios Charisopoulos\thanks{School of ORIE, Cornell University, Ithaca, NY 14850,
		USA; \texttt{people.orie.cornell.edu/vc333/}}}

	\date{}
	\maketitle
%\section{Introduction}
%\input{abstract}

%%%%%%%%
%%Abstract 
\begin{abstract}
Stochastic (sub)gradient methods require step size schedule tuning to perform well in practice. Classical tuning strategies decay the step size polynomially and lead to optimal sublinear rates on (strongly) convex  problems.  An alternative schedule, popular in nonconvex optimization, is called \emph{geometric step decay} and proceeds by halving the step size after every few epochs. In recent work, geometric step decay was shown to improve exponentially upon classical sublinear rates for the class of \emph{sharp} convex functions. In this work, we ask whether geometric step decay similarly improves stochastic algorithms for the class of sharp nonconvex problems. Such losses feature in modern statistical recovery problems and lead to a new challenge not present in the convex setting: the region of convergence is local, so one must bound the probability of escape. Our main result shows that for a large class of stochastic, sharp, nonsmooth, and nonconvex problems a geometric step decay schedule endows well-known algorithms with a local linear rate of convergence to global minimizers. This guarantee applies to the stochastic projected subgradient, proximal point, and prox-linear algorithms. As an application of our main result, we analyze two statistical recovery tasks---phase retrieval and blind deconvolution---and match the best known guarantees under Gaussian measurement models and establish new guarantees under heavy-tailed distributions.
\end{abstract}
%%%%%%%%

%%%%%%%%
%%Introduction
\section{Introduction} 
Stochastic (sub)gradient methods form the algorithmic core of much of modern statistical and machine learning. Such algorithms are typically sensitive to algorithm parameters, and require extensive step size tuning to achieve adequate performance. Classical tuning strategies decay the step size polynomially and lead to optimal sublinear  rates of convergence on convex and strongly convex problems
\begin{equation}\label{eqn:stoch_approx_1}
\displaystyle\min_{x\in\cX}~ f(x)=\EE_{\data} [f(x,\data)], \tag{$\mathcal{SO}$}
\end{equation}  
where the loss functions $f(\cdot,z)$ are convex and 
$\cX$ is a closed convex set~\cite{complexity}. An alternative schedule, called \emph{geometric step decay}, decreases the step size geometrically by halving it after every few epochs. In recent work \cite{xu2016accelerated}, geometric step decay was shown to improve exponentially upon classical sublinear rates under the sharpness assumption:
\begin{align}\label{eq:sharp}
f(x)- \min_{\cX} f\geq \mu\cdot \dist(x,\cX^\ast) \qquad \forall x\in \cX,
\end{align}
where $\mu>0$ is some constant and $\cX^\ast =\argmin_{x\in \cX} f(x)$ is the solution set.  This result complements early works of Goffin~\cite{goffin} and Shor~\cite{shor2012minimization}, which show that deterministic subgradient methods converge linearly on sharp convex functions if their step sizes decay geometrically.  The work \cite{xu2016accelerated} also reveals a departure from the smooth strongly convex setting, where deterministic linear rates degrade to sublinear rates when the gradient is corrupted by noise~\cite{complexity}.\footnote{There are notable exceptions for highly structured problems, such as finite sums~\cite{SAG} or interpolation problems where all terms in the sum share the same minimizer~\cite{schmidt2013fast}.}

Beyond the convex setting, sharp problems appear often in nonconvex statistical recovery problems, for example, in robust matrix sensing \cite{li2018nonconvex}, phase retrieval \cite{eM,duchi_ruan_PR}, blind deconvolution \cite{charisopoulos2019composite}, and quadratic/bilinear sensing and matrix completion \cite{charisopoulos2019low}. For such problems, sharpness is surprisingly common and corresponds to strong identifiability of the statistical model. Despite this,  we do not know whether stochastic algorithms equipped with geometric step decay---or any other step size schedule---linearly converge on sharp nonconvex problems.\footnote{There are some exceptions for highly-structured problems, such as interpolation problems where all terms in the expectation share the same minimizer~\cite{asi2018stochastic,tan2018phase}.} Such algorithms, if available, could pave the way for new sample efficient strategies for these and other statistical recovery problems.

The main result of this work shows that for a large class of stochastic, sharp, nonsmooth, and nonconvex problems
\begin{quote}
\centering a geometric step decay schedule endows well-known algorithms \\  with a local linear rate of convergence to global minimizers.
\end{quote}
This guarantee applies, for example, to the stochastic projected subgradient, proximal point, and prox-linear algorithms.  Beyond sharp growth~\eqref{eq:sharp}, we also analyze losses that grow sharply away from some closed set $\cS$, which is strictly larger than $\cX^\ast$. Such sets $\cS$ are akin to ``active manifolds" in the sense of Lewis~\cite{lewis_active} and Wright \cite{MR1227547}. For example, the loss $f(x,y)=x^2+|y|$ is not sharp relative to its minimizer, but is sharp relative to 
the $x$-axis.  For these problems, our algorithms converge linearly to the $\cS$. %Such algorithms can approximately identify the support of the signal at a linear rate, when properly initialized.
 Finally, we illustrate the result with two statistical recovery problems: phase retrieval and blind deconvolution. For these recovery tasks, our results match the best known computational and sample complexity guarantees under Gaussian measurement models and establish new guarantees under heavy-tailed distributions. %We validate the algorithm results with numerical experiments.
%%%%%%%%

%%%%%%%%
%% Related Work
\subsection*{Related work}
Our paper is closely related to a number of  influential techniques in stochastic, convex, and nonlinear optimization. We now survey these related topics.

{\bf Stochastic model-based methods.} In this work, we use algorithms that iteratively sample and minimize simple stochastic convex models of the loss function. Throughout, we call these methods \emph{model-based algorithms.} Such algorithms include the stochastic projected subgradient, prox-linear, and proximal point methods.
Stochastic model based algorithms are known to converge globally to stationary points at a sublinear rate on a large class of nonsmooth and nonconvex problems~\cite{davis2019stochastic,duc_ruan_stoch_comp}. Some model-based algorithms also possess superior stability properties and can be less sensitive to step size choice than traditional stochastic subgradient methods~\cite{asi2019importance,asi2018stochastic}.

{\bf Geometrically decaying learning rate in deterministic optimization.}
polynomially decaying step-sizes  are common in stochastic optimization  \cite{MR1993642,MR0042668,MR1167814}. In contrast, we develop algorithms with step sizes that decay geometrically.  Geometrically decaying step sizes were first analyzed in convex optimization by Shor \cite[Thm 2.7, Sec. 2.3]{shor2012minimization} and Goffin \cite{goffin}. This step size schedule is also closely related to the step size rules of Eremin \cite{MR0175627} and Polyak \cite{MR0250452}. %, which assume knowledge of the minimal value of the convex objective.
Similar schedules are known to accelerate convex subgradient methods under H\"{o}lder growth as shown in \cite{Johnstone2019,yang2018rsg}.
Geometrically decaying step sizes for deterministic nonconvex subgradient methods were systematically studied in \cite{MR3869491}.

{\bf Geometric step decay in stochastic optimization.}
The geometric step decay schedule is common in practice: for example, see Krizhevsky et. al. \cite{krizhevsky2012imagenet} and He et al. \cite{he2016deep}. It is a standard option in the popular deep learning libraries, such as Pytorch \cite{paszke2017automatic} and TensorFlow \cite{abadi2015tensorflow}. Geometric step decay has been analyzed in a number of recent papers in stochastic convex optimization, including  \cite{ge2019step,kulunchakov2019generic, aybat2019universally,yang2018does,ghadimi2013optimal,xu2016accelerated}.
%In contrast, step-decay in nonconvex optimization are much less explored.
Among these papers, the work \cite{xu2016accelerated} relates most to ours. There, the authors propose two geometric step decay strategies that converge linearly on convex functions that are sharp and have bounded stochastic subgradients. These algorithms either use a moving ball constraint or follow a proximal-point type procedure. We follow the latter strategy, too, at least to obtain high-probability guarantees. The paper \cite{xu2016accelerated} differs from our work in that they assume convexity and a uniform bound on the stochastic subgradients. In contrast, we do not assume convexity and only assume that stochastic subgradients have a finite second moment.   We are aware of only one subgradient method for stochastic nonconvex problems that converges linearly \cite{asi2018stochastic} under favorable assumptions. In \cite{asi2018stochastic}, the authors develop a ``clipped" subgradient method, which resembles a safeguarded stochastic Polyak step. In contrast to our work, their algorithms converge linearly only under ``perfect interpolation," meaning that all terms in the expectation share a minimizer. We do not make this assume here.

{\bf Restarts in deterministic optimization.}
Restart techniques have a long history in nonlinear  programming, such as for
conjugate gradient and limited memory quasi-Newton methods. They have also been
used more recently to improve the complexity of algorithms in deterministic
convex optimization. For example, restart schemes can accelerate sublinear
convergence rates for convex problems that satisfy growth conditions as shown
by Nesterov \cite{nest_orig} and Nemirovskii and Nesterov
\cite{NEMIROVSKII198521}, Renegar \cite{renegar2018simple}, O'Donoghue and
Candes \cite{MR3348171}, Roulet and d'Aspremont \cite{NIPS2017_6712}, Freund and Lu~\cite{Freund2018}, and Fercoq and Qu \cite{fercoq2016restarting,fercoq2017adaptive} and others.
In the nonconvex setting, stochastic restart methods are challenging to analyze, since the region of linear convergence is local. To overcome this challenge, one must bound the probability that the iterates leave this region. One of our main technical contributions is a technique for bounding this probability.

{\bf Finite sums.}
For finite sums, stochastic algorithms that converge linearly are more common.
For example, for finite sums that are sharp and convex, Bertsekas and Nedi{\'c}~\cite{nedic2001convergence} prove that an incremental Polyak-type algorithm converges linearly. For finite sums that are smooth and strongly convex, variance reduced methods, such as SAG \cite{SAG}, SAGA \cite{SAGA2}, SDCA \cite{sdca},  SVRG \cite{svrg}, MISO/Finito \cite{pmlr-v32-defazio14,MR3335503}, SMART \cite{davis2016smart} and their proximal extensions converge linearly. %These algorithms converge linearly on the problem of minimizing  smooth and strongly convex finite sums of functions.
The algorithms we develop here do not assume a finite sum structure.

{\bf Verifying sharpness.}
Sharp growth is a central assumption in this work. This property is surprisingly common in statistical recovery problems. For example, sharp growth has been established for robust matrix sensing \cite{li2018nonconvex}, phase retrieval \cite{eM,duchi_ruan_PR}, blind deconvolution \cite{charisopoulos2019composite}, quadratic and bilinear sensing and matrix completion \cite{charisopoulos2019low} problems. Consequently, the results of this paper apply in these settings.
%%%%%%%%

%%%%%%%%
%%Notation
\subsection*{Notation}
We will mostly follow standard notation used in convex analysis and stochastic optimization. Throughout, the symbol $\R^d$ will denote a $d$-dimensional Euclidean space with the inner product $\langle \cdot,\cdot\rangle$ and the induced norm $\|x\|=\sqrt{\langle x,x\rangle}$. We denote the open ball of radius $\varepsilon>0$ around a point $x\in \R^d$ by the symbol $B_{\varepsilon}(x)$.
For any set $Q\subset\R^d$, the  {\em distance function} and the {\em projection map} are defined by
\begin{equation*}
\dist(x,Q):=\inf_{y\in Q} \|y-x\|\qquad \textrm{and}\qquad
\proj_Q(x):=\argmin_{y\in Q} \|y-x\|,
\end{equation*}
respectively.  Consider a function $f\colon\R^d\to\R\cup\{\pm\infty\}$ and a point $x$, with $f(x)$ finite. The {\em subdifferential} of $f$ at $x$, denoted by $\partial f(x)$, consists of all vectors $v\in\R^d$ satisfying
$$f(y)\geq f(x)+\langle v,y-x\rangle+o(\|y-x\|)\qquad \textrm{as }y\to x.$$
A function $f$ is called {\em $\rho$-weakly convex} on an open convex set $U$ if the perturbed function $f+\frac{\rho}{2}\|\cdot\|^2$ is convex on $U$. The subgradients of such functions automatically satisfy the uniform approximation property:
$$f(y)\geq f(x)+\langle v,y-x\rangle-\frac{\rho}{2}\|y-x\|^2\qquad \textrm{for all }x,y\in U, v\in \partial f(x).$$
%%%%%%%%
%\input{dima_stuff}

%%%%%%%%
%% Algorithms, Assumptions, and Main Results
\section{Algorithms, assumptions, and main results}

In this section, we formalize our target problem and introduce algorithms to solve it. We then outline our main results. The complete theorem statements and proofs appear in Section~\ref{sec:formal}.
Throughout, we consider the minimization problem
\begin{align}\label{eqn:stoch_approx}
\min_{x \in \cX}~ f (x).
\end{align}
for some function $f\colon\R^d\to\R$ and a
closed convex set $\cX \subseteq \RR^d$. We define the set  $\cX^*:=\argmin_{x\in \cX} f(x)$ and assume it to be nonempty. We also fix a probability space $(\Omega,\mathcal{F},\PP)$ and equip $\R^d$ with the Borel $\sigma$-algebra and make the following assumption.
\begin{enumerate}[label=$\mathrm{(A\arabic*)}$]
	\item \label{assumption:sample} {\bf (Sampling)} It is possible to generate i.i.d.\ realizations $\data_1,\data_2, \ldots \sim \PP$.
\end{enumerate} 

The algorithms we develop rely on a stochastic oracle model for Problem~\eqref{eqn:stoch_approx_1} that was recently introduced in~\cite{davis2019stochastic}. These algorithms assume access to a family of functions ${f_x(\cdot,\data)}$---called stochastic models---indexed by basepoints $x\in\R^d$ and random elements $\data\sim \PP$. Given these models, the generic \emph{stochastic model-based algorithm} of~\cite{davis2019stochastic} iterates the steps:
\begin{equation}\label{eq:MBA}
\begin{aligned}
&\textrm{Sample } \data_\inneridx \sim \PP\\
& \textrm{Set } y_{\inneridx+1} = \argmin_{y\in\cX}~ \left\{f_{y_\inneridx}(y,\data_\inneridx) + \frac{1}{2\alpha} \|y - y_\inneridx\|^2\right\}
\end{aligned}
\end{equation}
In this work, model-based algorithms form the core of the following restart strategy: given inner and outer loop sizes $\INNERIDX$ and $\OUTERIDX$, respectively, as well as initial stepsize $\alpha_0 > 0 $, perform:
\begin{equation}\label{eq:RMBA}
\begin{aligned}
&\textrm{For $\outeridx = 0, \ldots, \OUTERIDX-1$:} \\
&\hspace{20pt} \textrm{Initialize~\eqref{eq:MBA} at $y_0 = x_\outeridx$, set $\alpha = 2^{-\outeridx}\alpha_0$, and run  $K$ iterations;}\\
&\hspace{20pt} \textrm{Sample $x_{\outeridx + 1}$ uniformly from $y_0, \ldots, y_\INNERIDX$.}
\end{aligned}
\end{equation} 
This restart strategy is common in machine learning practice and is called \emph{geometric step decay.} Restart schemes date back to the fundamental work of Nesterov \cite{nest_orig} and Nemirovskii and Nesterov \cite{NEMIROVSKII198521} and more recently appear in~\cite{renegar2018simple,MR3348171,NIPS2017_6712,fercoq2016restarting,fercoq2017adaptive,xu2016accelerated,ge2019step}. These strategies often improve the convergence guarantees of the algorithm they restart under growth assumptions, for example, by boosting an algorithm that converges sublinearly to one that converges linearly. In this work, we will show that restart scheme~\eqref{eq:RMBA}  similarly improves~\eqref{eq:MBA} for a large class of nonconvex stochastic optimization problems. 

\subsection{Assumptions} 

In this section, we formalize our assumptions on sharp growth of~\eqref{eqn:stoch_approx} as well as on accuracy, regularity, and Lipschitz continuity of the models.

\subsubsection*{Sharp Growth}

We assume that $f(\cdot)$ grows sharply as $x$ moves in the direction normal to a closed set $\cS$.
\begin{enumerate}[label=$\mathrm{(A\arabic*)}$]
\setcounter{enumi}{1}
	\item\label{assumption:sharp} {\bf (Sharpness)} There exists a constant $\mu > 0$ and a closed set $\cS \subseteq \cX$ satisfying $ \cX^\ast \subseteq \cS  $ such that the following bound holds: 
	\begin{align}\label{eq:sharp_S}
	f(x) - \inf_{y \in \proj_{\cS}(x)}f(y) \geq \mu \cdot \dist(x, \cS)  \qquad \forall x \in \cX.
	\end{align}
\end{enumerate}
This property generalizes the classical sharp growth condition~\eqref{eq:sharp}, where $\cS=\cX^*$. The setting $\cS = \cX^\ast$ is well-studied in nonlinear programming and often underlies rapid convergence guarantees for deterministic local search algorithms. Beyond the classical setting, $\cS$ could be a sublevel set of $f$ or an ``active manifold''  in the sense of Lewis~\cite{lewis_active}; see Section~\ref{sec:sharpness_identif}. When $\cS \neq \cX^\ast$, we design stochastic algorithms that do not necessarily converge to global minimizers, but instead linearly converge to $\cS$.

\subsubsection*{Accuracy and Regularity} We assume that the models are convex and under-approximate $f$ up to quadratic error.
\begin{enumerate}[label=$\mathrm{(A\arabic*)}$]
\setcounter{enumi}{2}
	\item \label{assumption:oneside} {\bf (One-sided accuracy)} There exists $\quadapprox > 0$ and an open convex set $U$ containing $\cX$ and a measurable function $(x,y,\data)\mapsto f_x(y,\data)$, defined on $U\times U\times\Omega$, satisfying  $$\EE_{\data}\left[f_x(x,\data)\right]=f(x)
	\qquad \forall x\in U,$$ and 
	$$\EE_{\data}\left[f_x(y,\data)-f(y)\right]
	\leq\frac{\quadapprox}{2}\|y-x\|^2\qquad \forall x,y\in U.$$
	\item\label{assumption:convex} {\bf (Convex models)} The function $f_x(\cdot,\data)$ is convex $\forall x\in U$ and a.e. $\data\in \Omega$.
\end{enumerate}
%\end{assumption}

%A minor variation of the class of one-sided models detailed in Assumption~\ref{assump:main} was originally introduced in~\cite[Assumption B]{davis2019stochastic}. 

Models satisfying \ref{assumption:oneside} and \ref{assumption:convex}, and their algorithmic implications, were analyzed in~\cite[Assumption B]{davis2019stochastic}; a closely related family of models was investigated in \cite{asi2019importance,asi2018stochastic}. Assumptions~\ref{assumption:oneside} and~\ref{assumption:convex} imply that $f$ is $\quadapprox$-weakly convex on $U$, meaning that the assignment $x \mapsto f(x) + \frac{\quadapprox}{2}\|x\|^2$ is convex \cite[Lemma 4.1]{davis2019stochastic}. While we assume~\ref{assumption:oneside} throughout the paper, we show in Remark~\ref{remark:beyondweak} that our results hold under an even weaker assumption. For certain losses, models that satisfy~\ref{assumption:oneside} and~\ref{assumption:convex} are easy to construct, as the following example shows. 

\begin{example}[Convex Composite Class]\label{example:composite}
{\rm  Stochastic convex composite losses take the form 
$$f(x)=\EE_{\data} f(x,\data)\quad \textrm{with}\quad f(x,\data)=h(c(x,\data),\data),$$
where $h(\cdot, \data)$ are Lipschitz and convex and the nonlinear maps $c(\cdot,\data)$ are $C^1$-smooth with Lipschitz Jacobian. Such losses appear often in data science and signal processing (see \cite{duc_ruan_stoch_comp,charisopoulos2019low,proxpoint_rev_dima} and references therein). For this problem class, natural models  include
\begin{itemize}
	\item[] {\bf (subgradient)}\quad \hspace{15pt} $f_x(y,\data)=f(x,\data)+\langle \nabla c(x, \data)^\top v,y-x\rangle $ for any $v\in \partial  h(c(x, \data), \data)$.
	\item[] {\bf (prox-linear)}\quad \hspace{20pt} $f_x(y,\data)=h(c(x,\data)+\nabla c(x,\data)(y-x),\data).$
	\item[] {\bf (proximal point)}\quad $f_x(y,\data)=f(y,\data) +  \frac{\quadapprox}{2} \|x - y \|^2$,
\end{itemize}
where $\quadapprox$ is large enough to guarantee the proximal model is convex.\footnote{One could choose the product of the Lipschitz constants of $h$ and $\nabla c$.} If a lower bound $\ell(\data)$ on $\inf f(\cdot,\data)$ is known, one can also choose a clipped model 
\begin{itemize}
	\item[] {\bf (clipped)}\quad \hspace{40pt} $\tilde f_x(y, \data) = \max\{  f_x(y,\data), \ell(\data)\} $, 
\end{itemize} for any of the models $ f_x(\cdot,\data)$ above, as was suggested by  \cite{asi2018stochastic}. Intuitively, models that better approximate $f(x, z)$ are likely to perform better in practice; see~\cite{asi2018stochastic} for theoretical evidence supporting this claim. \qed }
\end{example}

\begin{figure}[h!]
		\begin{center}
			\begin{tikzpicture}[scale=0.6]
			\pgfplotsset{every tick label/.append style={font=\large}}
			\begin{axis}[%
			domain = -2:2,
			samples = 200,
			axis x line = center,
			axis y line = center,
			xtick={0.5},
			ytick=\empty
			%        ticks = none
			]		
			\addplot[black, ultra thick] {abs(x^2-1)} [yshift=3pt] node[pos=.95,left] {\huge{$f$}};
			{         \addplot[purple, very thick] {abs(0.75-(x-0.5))} [yshift=6pt] node[pos=.95,left] {\huge{$f_x$}};      }
			%		       {            \addplot[purple, very thick] [domain=-1:2]{abs(0.75-(x-0.5))+1.2*(x-0.5)^2} [yshift=3pt] node[pos=.95,left] {\footnotesize{$f_x+(x-0.5)^2$}};    }         
			%           \only<3>       {            \addplot[violet,dashed, very thick] [domain=-1.5:1.7]{abs(0.75-(x-0.5))-1.2*(x-0.5)^2} [yshift=3pt] node[pos=.95,left] {\footnotesize{$f_x-(x-0.5)^2$}};    }        
			%      \draw[fill] (1,1) circle [radius=5];
			
			\addplot [only marks,mark=*] coordinates { (0.5,0.75) };
			%		{   \addplot [only marks, mark=*, mark size=1.5] coordinates { (11/12,0) };}
			\end{axis}		
			\end{tikzpicture}
			\qquad\qquad\qquad
		%	\caption{$f(x)=|x^2-1|$}
			\begin{tikzpicture}[scale=0.6]
			\pgfplotsset{every tick label/.append style={font=\large}}
			\begin{axis}[%
			domain = -2:2,
			samples = 200,
			axis x line = center,
			axis y line = center,
			xtick={0.5},
			ytick=\empty
			%        ticks = none
			]		
			\addplot[black, ultra thick] {abs(x^2-1)+(x-0.5)^2} [yshift=3pt] node[pos=.95,left] {\large{$f+(x-0.5)^2$}};      
			{         \addplot[purple, very thick] {abs(0.75-(x-0.5))} [yshift=10pt] node[pos=.95,left] {\huge{$f_x$}};      }
			%		       {            \addplot[purple, very thick] [domain=-1:2]{abs(0.75-(x-0.5))+1.2*(x-0.5)^2} [yshift=3pt] node[pos=.95,left] {\footnotesize{$f_x+(x-0.5)^2$}};    }         
			%           \only<3>       {            \addplot[violet,dashed, very thick] [domain=-1.5:1.7]{abs(0.75-(x-0.5))-1.2*(x-0.5)^2} [yshift=3pt] node[pos=.95,left] {\footnotesize{$f_x-(x-0.5)^2$}};    }        
			%      \draw[fill] (1,1) circle [radius=5];
			
			\addplot [only marks,mark=*] coordinates { (0.5,0.75) };
			%		{   \addplot [only marks, mark=*, mark size=1.5] coordinates { (11/12,0) };}
			\end{axis}		
			\end{tikzpicture}
		\end{center}
		%	\caption{$f(x)=|x^2-1|$}

	%\caption{plots of....}
	%\label{fig:fig}
	%  \includegraphics[scale=0.3]{moreau.pdf}\qquad
	%  \includegraphics[scale=0.6]{near_stats_fig.pdf}
	\vspace{-20pt}
	\caption{Illustration of a one-sided model: $f(x)=|x^2-1|$, $f_{0.5}(y)=|1.25-y|$}
	\label{fig:illustr_lower_model}
\end{figure}
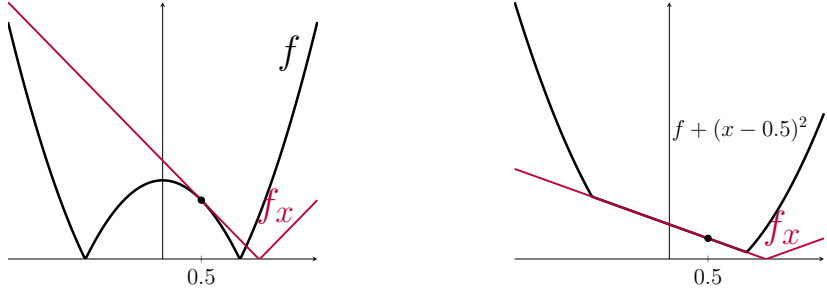

\subsubsection*{Lipschitz continuity} 

We assume that the models are Lipschitz on a tube surrounding $\cS$.
\begin{enumerate}[label=$\mathrm{(A\arabic*)}$]
\setcounter{enumi}{4}
	\item\label{assumption:lipschitz} {\bf (Lipschitz property)} Define the tube
$$
\cT_{\gamma} := \left\{ x \in \cX\mid \dist(x, \cS) \leq \frac{\gamma \mu}{\quadapprox }\right\} \qquad \forall \gamma > 0.
$$
We assume that there exists a measurable function $L \colon \RR^d \times \Omega\to \R_+$  
 such that 
	\begin{equation}\label{eqn:lip_mod}
	\min_{v\in \partial f_x(x,z)}\|v\|\leq L(x,z)
%	\limsup_{y \rightarrow x} \frac{f_x(y, \data) - f_x(x, z)}{\|x - y\|}  \le  L(x, \data), 
	\end{equation}
	for all $x\in\cT_2$ and a.e. $\data\in \Omega$.
Moreover, we assume there exists $\lipsymb > 0$ such that $$\sup_{x \in \cT_2} \sqrt{\EE_{\data}\left[L(x, \data)^2\right]} \leq \lipsymb.$$
%	For all $x,y,z\in U$ and a.e. $\data\in \Omega$, the inequalities hold:
%	\begin{align}
%	|f(y)-f(z)|&\leq L\|y-z\|,\label{eqn:lip_func}\\
%	|f_x(y,\data)-f_x(z,\data)|&\leq L\|y-z\|.\label{eqn:lip_mod}
%	\end{align}
\end{enumerate}
This Lipschitz property is local and differs from the global assumption of~\cite[Assumption B4]{davis2019stochastic}. The property holds only in $\cT_2$ since our algorithms will be initialized in this tube and will never leave it.\footnote{The set $\cT_2$ is a natural region to initialize in since it is provably the widest tube around the solution set containing no extraneous stationary points; see \cite[Lemma 2.1]{MR3869491} and the discussion following  \cite[Lemma 3.1]{charisopoulos2019composite}.} The local nature of this property is crucial to signal recovery applications, for example, blind deconvolution and phase retrieval. In these problems, global Lipschitz continuity does not hold; see Section~\ref{sec:examples}.

\subsection{Algorithms and results}\label{sec:algorithmandresults}

Stochastic model-based algorithms (Algorithm~\ref{alg:stoc_prox}) iteratively sample and minimize stochastic convex models of the loss function. When equipped with models satisfying~\ref{assumption:oneside}, \ref{assumption:convex} and a global Lipschitz condition, these algorithms converge to stationary points of~\eqref{eqn:stoch_approx} at a sublinear rate~\cite{davis2019stochastic,duc_ruan_stoch_comp}. In this section, we show that such sublinear rates can be improved to local linear rates by using a simple restart strategy. We introduce two such strategies that succeed with probability $1-\delta$, for any $\delta > 0$. The first (Algorithm~\ref{alg:stoc_prox_outer}) allows for arbitrary sets $\cS$ in Assumption~\ref{assumption:sharp}, but its sample complexity and initialization region scale poorly in $\delta$. The second (Algorithm~\ref{alg:stoc_prox_outer_sc}) assumes $\cS = \cX^\ast$, but has much better dependence on $\delta$. %

\begin{algorithm}[h!]
	{\bf Input:} $y_0\in \R^d$, $\alpha\geq 0$, iteration count $\INNERIDX$,  flag $\isconv \in \{\true, \false\}$ \\
	{\bf Step } $k=0,\ldots,K$:\\		
	\begin{equation*}\left\{
	\begin{aligned}
	&\textrm{Sample } \data_\inneridx \sim \PP\\
	& \textrm{Set } y_{\inneridx+1} = \argmin_{y \in \cX}~ \left\{f_{y_\inneridx}(y,\data_\inneridx) + \frac{1}{2\alpha} \|y - y_\inneridx\|^2\right\}
	\end{aligned}\right\}.
	\end{equation*}
	{\bf If } $\isconv$\\
	\hspace{20pt} {\bf Return } $\frac{1}{K+1} \sum_{\inneridx = 1}^{\INNERIDX+1}  y_{\inneridx}$;\\
	{\bf Else } \\

	\hspace{20pt} {\bf Return} $y_{\INNERIDX^*}$, where $\INNERIDX^*\in \{0,\ldots,\INNERIDX\}$ is selected uniformly at random;		
	\caption{$\SMBM(y_0, \alpha, \INNERIDX, \isconv)$
		%: PSSM($x_0,T,\{\alpha_t\}$)
	}
	\label{alg:stoc_prox}
\end{algorithm}

\smallskip

\begin{algorithm}[H]
	{\bf Input:} $x_0\in \R^d$,  $\alpha_0\geq 0$, counters  $\INNERIDX\in \NN,T\in \NN$, flag $\isconv \in \{\true, \false\}$\\
	{\bf Step } $\outeridx=0,\ldots,\OUTERIDX-1$:
\begin{align*}
	\alpha_t&:=2^{-t}\alpha_0\\
	x_{\outeridx+1} &:= \SMBM(x_\outeridx, \alpha_t,\INNERIDX, \isconv).	
	\end{align*}
	{\bf Return} $x_{\OUTERIDX}$		
	\caption{$\RSMBM(x_0, \alpha_0, \INNERIDX, \OUTERIDX, \isconv)$
		%: PSSM($x_0,T,\{\alpha_t\}$)
	}
	\label{alg:stoc_prox_outer}
\end{algorithm}
\smallskip

Given assumptions~\ref{assumption:sample}-\ref{assumption:lipschitz}, the following theorem shows that the first restart strategy (Algorithm~\ref{alg:stoc_prox_outer}) converges linearly to $\cS$. 

\begin{thm}[Informal]\label{thm:first_informal}
Fix a target accuracy $\varepsilon>0$, failure probability $\delta\in (0,\frac{1}{4})$, and a point  $x_0\in \cT_{\gamma\sqrt{\delta}}$ for some $\gamma\in (0,1)$. 
Then with appropriate parameter settings, the point $x=\RSMBM(x_0, \alpha_0, \INNERIDX, \OUTERIDX, \false)$ will satisfy $\dist(x,\cS)<\varepsilon$ with probability at least $1-4\delta$. Moreover, the number of samples $z_i\sim\PP$ generated by the algorithm is at most
\begin{equation*} %\label{eqn:sample_complex_inform}
 \mathcal{O}\left(\left( \frac{ \lipsymb}{\delta \mu}\right)^2 \log^3 \left(\frac{\gamma\mu/\quadapprox}{\varepsilon}\right)\right).
\end{equation*}
\end{thm}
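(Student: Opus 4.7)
The plan is to prove by induction on outer epochs that each call to $\SMBM$ halves the distance to $\cS$ with high probability, provided the inner iterates stay inside the tube $\cT_{\gamma'}$ for some $\gamma'\in(1,2)$. The starting point is a one-step descent analysis. Fix an epoch with step size $\alpha$, and let $p\in\proj_\cS(y_k)$ approximately achieve the infimum in the sharpness inequality~\eqref{eq:sharp_S}. Because $y_{k+1}$ minimizes the $1/\alpha$-strongly convex map $f_{y_k}(\cdot,\data_k)+\frac{1}{2\alpha}\|\cdot-y_k\|^2$ over $\cX$, strong convexity yields
\[
\|y_{k+1}-p\|^2 \leq \|y_k-p\|^2 + 2\alpha\bigl(f_{y_k}(p,\data_k)-f_{y_k}(y_{k+1},\data_k)\bigr) - \|y_{k+1}-y_k\|^2.
\]
Bounding $f_{y_k}(y_k,\data_k)-f_{y_k}(y_{k+1},\data_k)\leq L(y_k,\data_k)\|y_{k+1}-y_k\|$ via~\eqref{eqn:lip_mod} and absorbing the negative square by Young's inequality, then taking conditional expectations and invoking $\EE[f_x(x,\data)]=f(x)$, the one-sided accuracy bound, and sharpness $f(y_k)-f(p)\geq\mu\,\dist(y_k,\cS)$ produces the key recursion
\[
\EE\bigl[\dist(y_{k+1},\cS)^2 \mid \mathcal{F}_k\bigr] \leq (1+\quadapprox\alpha)\dist(y_k,\cS)^2 - 2\alpha\mu\,\dist(y_k,\cS) + \alpha^2 \lipsymb^2.
\]

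Next I would handle the locality of assumptions~\ref{assumption:lipschitz} and~\ref{assumption:sharp} by a stopping-time argument. Let $\tau:=\min\{k:y_k\notin\cT_{\gamma'}\}$ and set $D_k:=\dist(y_{k\wedge\tau},\cS)^2$. Inside $\cT_{\gamma'}$ one has $\quadapprox\dist(y_k,\cS)\leq\gamma'\mu$, so the weak-convexity term is partially absorbed and the recursion collapses to $\EE[D_{k+1}\mid\mathcal{F}_k]\leq D_k-(2-\gamma')\alpha\mu\sqrt{D_k}+\alpha^2\lipsymb^2$; in particular $D_k - k\alpha^2\lipsymb^2$ is a supermartingale. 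Applying optional stopping at the hitting time of a level $s$ yields the maximal inequality
\[
\PP\Bigl[\max_{0\leq k\leq K} \dist(y_{k\wedge\tau},\cS)^2 \geq s\Bigr] \leq \frac{\dist(y_0,\cS)^2 + K\alpha^2\lipsymb^2}{s}.
\]
Choosing $s=(\gamma'\mu/\quadapprox)^2$ bounds the escape probability by $\delta$ precisely when $\dist(y_0,\cS)\lesssim \sqrt{\delta}\,\mu/\quadapprox$, which is the initialization condition $x_0\in\cT_{\gamma\sqrt\delta}$ assumed in the theorem.

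On the non-escape event the descent recursion telescopes to $\alpha\mu\sum_{k=0}^K \EE[\dist(y_k,\cS)] \leq \dist(y_0,\cS)^2 + (K+1)\alpha^2\lipsymb^2$. Since $\SMBM$ with $\isconv=\false$ returns a uniformly sampled iterate $y_{K^\ast}$, setting $\alpha=\Theta(\delta r\mu/\lipsymb^2)$ and $K=\Theta(\lipsymb^2/(\delta\mu)^2)$ with $r=\dist(y_0,\cS)$ forces $\EE[\dist(y_{K^\ast},\cS)]\leq \delta r/4$ on the good event, so Markov's inequality gives $\dist(y_{K^\ast},\cS)\leq r/2$ with probability $\geq 1-O(\delta)$. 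The outer loop of $\RSMBM$ preserves the balance between $\alpha_t=2^{-t}\alpha_0$ and $r_t\sim 2^{-t}r_0$, so the per-epoch analysis reapplies at every scale; an induction with hypothesis ``$\dist(x_t,\cS)\leq 2^{-t}r_0$ and $x_t\in\cT_{\gamma\sqrt\delta}$'' achieves accuracy $\varepsilon$ after $T=\lceil\log_2(r_0/\varepsilon)\rceil$ epochs. A union bound forces the per-epoch failure probability to $\delta/T$, which inflates $K$ to $\Theta(T^2 \lipsymb^2/(\delta\mu)^2)$, for a total sample count $TK=\mathcal{O}\bigl(\log^3(\gamma\mu/(\quadapprox\varepsilon))\cdot \lipsymb^2/(\delta\mu)^2\bigr)$, matching the theorem.

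The main technical hurdle is the stopping-time argument in Paragraph 2. Unlike the convex analyses in~\cite{xu2016accelerated}, one-sided accuracy introduces a weak-convexity term $\quadapprox\alpha\dist^2$ that can only be absorbed while the iterate lies inside the tube, while the noise $\alpha^2\lipsymb^2$ produces a nonnegative drift at $\cS$ itself and so never forces the iterate to stay near $\cS$. Delivering the maximal inequality that controls the escape event and the Markov estimate that halves the distance under the \emph{same} $(\alpha,K)$ parameters is what forces the initialization radius to scale like $\sqrt{\delta}$ and is the source of the $(\lipsymb/(\delta\mu))^2$ prefactor.
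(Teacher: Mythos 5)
Your proposal is correct and follows essentially the same route as the paper: the same one-step recursion from strong convexity of the proximal subproblem combined with the Lipschitz bound, one-sided accuracy, and sharpness; the same stopped-process supermartingale and Markov bound on the tube-escape probability (the paper's Proposition~\ref{claim:event_bound}); the same telescoping-plus-Markov argument showing the uniformly sampled iterate halves the distance (Proposition~\ref{claim:exp_1}); and the same outer induction with $\alpha_t = 2^{-t}\alpha_0$ and a per-epoch failure budget of order $\delta/T$, which is exactly what produces the $T^2$ inflation of $K$ and the $\sqrt{\delta}$ initialization radius. No gaps.
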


Theorem~\ref{thm:first_informal} has interesting consequences not only for convergence to global minimizers, but also for ``active manifold identification." For example, when $\cS = \cX^*$, Theorem~\ref{thm:first_informal} shows that with constant probability, Algorithm~\ref{alg:stoc_prox_outer} converges linearly to the true solution set. When $\cS \neq \cX^\ast$ and is instead an  ``active manifold'' in the sense of Lewis~\cite{lewis_active}, Algorithm~\ref{alg:stoc_prox_outer} linearly converges to the active manifold. In our numerical evaluation, we illustrate this phenomenon for a sparse logistic regression problem. We empirically observe that the method converges linearly to the support of the solution, even though the overall convergence to the true solution may be sublinear. 

In our numerical experiments, we find that Algorithm~\ref{alg:stoc_prox_outer} almost always succeeds. Theorem~\ref{thm:first_informal}, on the other hand, only guarantees Algorithm~\ref{alg:stoc_prox_outer} succeeds with high probability when we greatly increase its sample complexity and initialize it close to $\cS$. We would like to boost Algorithm~\ref{alg:stoc_prox_outer} into a new algorithm whose sample complexity and initialization requirements scale only polylogarithmicaly in $1/\delta$. As a first attempt, we discuss the following two probabilistic techniques, both of which have limitations: 
\begin{itemize}
\item[] {\bf (Markov)} One approach is to call Algorithm~\ref{alg:stoc_prox_outer} multiple times for a moderately small value $\delta$ and pick out the ``best" iterate from the batch.
 This approach is flawed since, even in the convex setting, there is no procedure to test which iterate is ``best" without increasing sample complexity. 
 \item[] {\bf (Ensemble)} An alternative approach is based on a well-known resampling trick, which applies when $\cS=\{\bar x\}$ is a singleton set \cite[p. 243]{complexity}, \cite{hsu2016loss},\cite[Algorithm 1]{tan2018phase}: Run $m$ trials of Algorithm~\ref{alg:stoc_prox_outer}  with any fixed $\delta<1/4$, and denote the returned points by $\{ x_i\}_{i=1}^m$. Then with high probability, the majority of the points $\{ x_i\}_{i=1}^m$ will be  close to $\bar x$. Finally, to find an estimate near $\bar x$, choose any point that has at least $m/2$ other points close to it. 
\end{itemize}

The ensemble technique is promising, but it requires $\cS$ to be a singleton. This limits its applicability since many low-rank recovery problems (e.g. blind deconvolution, matrix completion, robust PCA) have uncountably many solutions. We overcome this issue by embedding Algorithm~\ref{alg:stoc_prox_outer} and the ensemble method within a proximal-point method. At each stage of this algorithm, we run multiple copies of a stochastic-model based method on a quadratically regularized problem that has a unique solution. Among those copies, we use the ensemble technique to pick out a ``successful" iterate. We summarize the resulting nested procedure in 
Algorithms~\ref{alg:stoc_prox_sc}-\ref{alg:stoc_prox_outer_sc}: Algorithm~\ref{alg:stoc_prox_sc} is a generic model-based algorithm applied on a quadratically regularized problem; Algorithm~\ref{alg:stoc_prox_high_prob} calls Algorithm~\ref{alg:stoc_prox_sc} as suggested by the ensemble technique; finally Algorithm~\ref{alg:stoc_prox_outer_sc} updates the regularization term, in the style of a proximal point method.

\smallskip
\begin{algorithm}[H]
	{\bf Input:} $y_0\in \R^d$, proximal parameter $\rho > \quadapprox$, scalar $\alpha > 0$, and iteration count $\INNERIDX$\\
	{\bf Step } $k=0,\ldots,K$:\\		
	\begin{equation*}\left\{
	\begin{aligned}
	&\textrm{Sample } \data_\inneridx \sim P\\
	%&\textrm{Set } \beta_\inneridx = \frac{(\inneridx +1) \strongconvex}{2} - \quadapprox + \sigma \\
	& \textrm{Set } y_{\inneridx+1} = \argmin_{y \in \cX}~ \left\{f_{y_\inneridx}(y,\data_\inneridx) + \frac{1}{2\alpha} \|y - y_\inneridx\|^2 + \frac{\rho}{2}\|y - y_0\|^2\right\}
	\end{aligned}\right\},
	\end{equation*}
	Sample $\INNERIDX^*\in \{0,\ldots,\INNERIDX\}$ uniformly at random.
	
	{\bf Return} $y_{\INNERIDX^*}$ 		
	\caption{$\SMBMSC(y_0, \rho, \alpha, \INNERIDX)$
		%: PSSM($x_0,T,\{\alpha_t\}$)
	}
	\label{alg:stoc_prox_sc}
\end{algorithm}

\smallskip
\begin{algorithm}[H]
	{\bf Input:} $y_0\in \R^d$, proximal parameter $\rho > \quadapprox$, scalar $\sigma > 0 $, iteration count $\INNERIDX$, trial count $m$, relative error tolerance $\epsilon$ \\
	{\bf Step } $j=1,\ldots,m$:\\
	\hspace{20pt} Set $y_j = \SMBMSC(y_0, \rho, \alpha, \INNERIDX)$.\\
	{\bf Step } $j=1,\ldots,m$:\\
	\hspace{20pt} \textbf{if } $|B_{2\epsilon}(y_j) \cap \{y_i\}_{i=1}^m| > \frac{m}{2}$\\
	\hspace{40pt} {\bf Return} $y_j$		\\
	\caption{$\SMBMSCE(y_0, \rho, \alpha , \INNERIDX, m, \epsilon)$
		%: PSSM($x_0,T,\{\alpha_t\}$)
	}
	\label{alg:stoc_prox_high_prob}
\end{algorithm}
\smallskip

\smallskip
\begin{algorithm}[H]
	{\bf Input:} $x_0\in \R^d$, proximal parameter $\rho_0  > \quadapprox$, initial accuracy $\epsilon_0 > 0$, stepsize $\alpha_0 > 0$, counts $\INNERIDX,\TRIALCOUNT,\OUTERIDX\in \NN$\\
	{\bf Step } $\outeridx=0,\ldots,\OUTERIDX-1$:\\
	\begin{align*}
	\rho_t &:= 2^\outeridx \rho_0 \\
	\epsilon_t &:= 2^{-t}\epsilon_0 \\
	\alpha_t &:= 2^{-\outeridx}\alpha_0\\
	x_{\outeridx+1} &:= \SMBMSCE(x_\outeridx, \rho_\outeridx, \alpha_t, \INNERIDX, \TRIALCOUNT, \epsilon_\outeridx).	
	\end{align*}
	{\bf Return} $x_{\OUTERIDX + 1}$		
	\caption{$\RSMBMSCE(x_0,\rho_0, \alpha_0, \INNERIDX,\epsilon_0 , \TRIALCOUNT, \OUTERIDX)$
		%: PSSM($x_0,T,\{\alpha_t\}$)
	}
	\label{alg:stoc_prox_outer_sc}
\end{algorithm}
\smallskip

We will establish the following guarantee. In the theorem, we assume $\cS=\cX^*$.

\begin{thm}[Informal]\label{thm:first_informal_high}
Fix a target accuracy $\varepsilon>0$, failure probability $\delta\in (0,1)$, and a point  $x_0\in \cT_{\gamma}$ for some $\gamma\in (0,\frac{1}{4})$. 
	Then with appropriate parameter settings, the point $x=\RSMBMSCE(x_0, \rho_0, \alpha_0, \INNERIDX,\varepsilon_0 , \TRIALCOUNT, \OUTERIDX)$ will satisfy $\dist(x,\cX^*)<\varepsilon$ with probability at least $1-\delta$. Moreover the total number of samples $z_i\sim\PP$ generated by the algorithm is at most
	\begin{equation*} %\label{eqn:sample_complex_inform}
	\mathcal{O}\left(\left( \frac{ \lipsymb}{ \mu}\right)^2 \log \left(\tfrac{\gamma\mu/\quadapprox}{\varepsilon}\right)\cdot \log\left(\frac{\log(\frac{\gamma\mu/\quadapprox}{\varepsilon})}{\delta}\right)\right).
	\end{equation*}
\end{thm}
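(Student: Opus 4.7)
The plan is to combine three nested convergence arguments---one for each of Algorithms~\ref{alg:stoc_prox_sc}, \ref{alg:stoc_prox_high_prob}, and \ref{alg:stoc_prox_outer_sc}---with a sharpness-driven contraction for the outer proximal-point iteration. At stage $\outeridx$, the regularized target is $F_\outeridx(y) := f(y) + \tfrac{\rho_\outeridx}{2}\|y-x_\outeridx\|^2$, which is $(\rho_\outeridx-\quadapprox)$-strongly convex on $U$ (since $\rho_\outeridx>\quadapprox$) and admits a unique minimizer $\bar y_\outeridx$. Comparing $F_\outeridx(\bar y_\outeridx)$ against $F_\outeridx(\proj_{\cX^\ast}(x_\outeridx))$ and invoking sharpness~\ref{assumption:sharp} yields
$$\dist(\bar y_\outeridx,\cX^\ast)\leq \tfrac{\rho_\outeridx}{2\mu}\,\dist(x_\outeridx,\cX^\ast)^2.$$
Under the schedule $\rho_\outeridx = 2^{\outeridx}\rho_0$ with $\rho_0 = \Theta(\mu/\dist(x_0,\cX^\ast))$, this is a factor-of-four contraction per outer stage, so that initialization inside $\cT_\gamma$ for $\gamma<1/4$ produces geometric decay of $\dist(x_\outeridx,\cX^\ast)$, provided the inner solver returns an approximate minimizer with $\|x_{\outeridx+1}-\bar y_\outeridx\|\leq O(\epsilon_\outeridx)$ at the matched rate $\epsilon_\outeridx = 2^{-\outeridx}\epsilon_0$.

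Next, I would show that Algorithm~\ref{alg:stoc_prox_sc} produces such an approximation in expectation. Because $f_{y_\inneridx}(\cdot,\data) + \tfrac{\rho}{2}\|\cdot - y_0\|^2$ is $\rho$-strongly convex by \ref{assumption:convex} and is one-sided accurate for $F_\outeridx$ by~\ref{assumption:oneside}, the strongly-convex variant of the analysis of~\cite{davis2019stochastic} gives
$$\EE\bigl[\|y_{\INNERIDX^\ast}-\bar y_\outeridx\|^2\bigr]\lesssim (1-c\rho_\outeridx\alpha_\outeridx)^{\INNERIDX} \|y_0-\bar y_\outeridx\|^2 + \tfrac{\lipsymb^2\alpha_\outeridx}{\rho_\outeridx}.$$
Choosing $\alpha_0 = \Theta(\epsilon_0^2\rho_0/\lipsymb^2)$ and $\INNERIDX = \Theta((\lipsymb/\mu)^2)$ simultaneously balances both terms against $\epsilon_\outeridx^2$ across all stages: the variance term scales like $\lipsymb^2\alpha_0/(4^\outeridx\rho_0) = O(\epsilon_\outeridx^2)$, and $\rho_\outeridx\alpha_\outeridx = \rho_0\alpha_0$ is constant, making $\INNERIDX$ stage-independent. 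Markov's inequality then gives $\|y_{\INNERIDX^\ast}-\bar y_\outeridx\|\leq \epsilon_\outeridx$ with some constant probability $p>1/2$.

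Boosting this to high probability is the role of Algorithm~\ref{alg:stoc_prox_high_prob}. With $\TRIALCOUNT = O(\log(\OUTERIDX/\delta))$ independent trials, Hoeffding's inequality implies that strictly more than $\TRIALCOUNT/2$ of the outputs $y_j$ lie in $B_{\epsilon_\outeridx}(\bar y_\outeridx)$ with probability at least $1-\delta/(2\OUTERIDX)$. On this event, any such ``good'' $y_j$ satisfies $|B_{2\epsilon_\outeridx}(y_j)\cap\{y_i\}_{i=1}^\TRIALCOUNT|>\TRIALCOUNT/2$, so the algorithm terminates; conversely any returned $y_j$ must have its $2\epsilon_\outeridx$-neighborhood intersect the majority of good trials, and the triangle inequality forces $\|y_j - \bar y_\outeridx\|\leq 3\epsilon_\outeridx$. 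A union bound over the $\OUTERIDX$ outer stages keeps the aggregate failure probability below $\delta$, and summing the per-stage cost $\TRIALCOUNT\cdot\INNERIDX$ over $\OUTERIDX = O(\log(\gamma\mu/(\quadapprox\varepsilon)))$ stages produces the advertised complexity.

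The hardest part will be ensuring that the inner iterates remain inside $\cT_2$, where the local Lipschitz bound~\ref{assumption:lipschitz} is valid---this is precisely the ``escape probability'' obstacle highlighted in the introduction. I would handle it by induction on $\outeridx$: assuming $x_\outeridx\in\cT_{\gamma}$, the quadratic regularization in Algorithm~\ref{alg:stoc_prox_sc} anchors every iterate near $y_0=x_\outeridx$, and a first-order optimality calculation combined with a Doob-type maximal inequality applied to the martingale driven by the stochastic subgradients (whose second moments are controlled by $\lipsymb$ via~\ref{assumption:lipschitz}) bounds $\max_\inneridx \|y_\inneridx - x_\outeridx\|$ by a constant multiple of $\mu/\quadapprox$ with probability at least $1-\delta/(4\OUTERIDX\TRIALCOUNT)$. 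A further union bound across stages and trials then guarantees the tube is never exited, closing the induction and yielding $\dist(x_{\OUTERIDX+1},\cX^\ast)\leq \varepsilon$ with probability $1-\delta$.
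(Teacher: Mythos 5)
Your overall architecture---an outer proximal-point loop with $\rho_t=2^t\rho_0$, a constant-probability inner solve, a median/ensemble boost, and a union bound over the $T$ stages---matches the paper's proof of Theorem~\ref{thm:high_prob}. One real difference in the outer loop: you treat the proximal point $\bar y_t$ as only quadratically closer to $\cX^\ast$ than $x_t$, via $\dist(\bar y_t,\cX^\ast)\le\tfrac{\rho_t}{2\mu}\dist(x_t,\cX^\ast)^2$, and drive the contraction through the schedule of $\rho_t$. The paper instead imposes the additional upper bound $\rho<\bigl(\tfrac{2-\gamma}{2\gamma}\bigr)\quadapprox$ and proves in Lemma~\ref{lem:prox_subproblem} that $\bar y_t$ lies \emph{exactly} in $\proj_{\cX^\ast}(x_t)$, so the inner solver's target is already a global solution and no extra term enters the distance recursion. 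Your route is viable and somewhat more robust to the choice of $\rho$, at the cost of carrying the extra $\dist(\bar y_t,\cX^\ast)$ term through the induction (with your stated constants the factor-of-two recursion does not quite close, but that is a matter of tuning $\epsilon_0$ and $\rho_0$).

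The genuine gap is in your treatment of the tube-escape probability. You propose to show that \emph{each individual} inner run stays in the tube with probability $1-\delta/(4TM)$ via a Doob-type maximal inequality, then union bound over all $TM$ runs. But Assumption~\ref{assumption:lipschitz} controls the stochastic subgradients only in second moment, so a Doob/Chebyshev bound on $\max_\inneridx\|y_\inneridx-x_\outeridx\|$ yields an escape probability of order $(\quadapprox/\gamma\mu)^2 K\lipsymb^2\alpha_\outeridx^2$, which under the stated parameter choices is an absolute \emph{constant} (of order $R_0^2(\quadapprox/\gamma\mu)^2$), not $\delta/(4TM)$. Forcing it below $\delta/(4TM)$ would require shrinking $\alpha$ or the initialization radius polynomially in $TM/\delta$, which is exactly the defect of Theorem~\ref{thm:first_informal} that this theorem is designed to remove; without sub-Gaussian tails there is no polylogarithmic route to per-trial high-probability control. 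The resolution the paper uses is to not demand per-trial high-probability control at all: the escape event is folded into the per-trial failure probability by a stopping-time/supermartingale argument (Proposition~\ref{claim:event_bound_sc}), each trial is only required to succeed---accuracy \emph{and} non-escape simultaneously---with probability $2/3$, and the ensemble's majority vote is what converts this constant success probability into $1-\delta'/T$ per stage. Your ensemble step already performs this conversion for the accuracy event; you should make non-escape part of the same per-trial success event rather than a separately union-bounded high-probability event.
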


Theorem~\ref{thm:first_informal_high} resolves the initialization and sample complexity issues of Theorem~\ref{thm:first_informal}. Incidentally, its claimed sample complexity also depends more favorably on $\varepsilon$ and on the problem parameters $\mu$ and $\quadapprox$. Theorem~\ref{thm:first_informal_high} is new in the nonconvex setting and also improves on prior work by Xu et al.~\cite{xu2016accelerated} for convex problems. There, the results require stochastic subgradients to be almost surely bounded, hence, sub-Gaussian. In contrast, Theorem~\ref{thm:first_informal_high} guarantees that $\dist(x, \cX^\ast) <\varepsilon$ with high-probability assuming only the local second moment bound~\ref{assumption:lipschitz}.

%%%%%%%%

%%%%%%%%
%%Proofs of Main Results
\section{Proofs of main results}\label{sec:formal}
In this section, we establish linear convergence guarantees for Algorithms~\ref{alg:stoc_prox_outer} and \ref{alg:stoc_prox_outer_sc} ---the main contributions of this work. Throughout this section, we assume that Assumptions~\ref{assumption:sample}-\ref{assumption:lipschitz} hold.

\subsection{Warm-up: convex setting}\label{sec:convex_setting}

We begin with a short proof of linear convergence for Algorithm~\ref{alg:stoc_prox_outer} in the convex setting. We use this simplified case to explain the general  proof strategy and point out the difficulty of extending the argument to the nonconvex setting.
Since we restrict ourselves to the convex setting, throughout this section (Section~\ref{sec:convex_setting}) we suppose:

\begin{itemize}
	\item Assumption~\ref{assumption:oneside} holds with $\quadapprox = 0$ and Assumption~\ref{assumption:sharp} holds with $\cS = \cX^\ast$. 
	\item The models $f_x(\cdot, \data)$ are $L(\data)$-Lipschitz on $\R^d$ for all $x$, where $L\colon\Omega\to\R$ is a measurable function satisfying $\sqrt{\EE_{\data}\left[L( \data)^2\right]} \leq \lipsymb$.
\end{itemize}
% \begin{quote}
%\centering  Assumption~\ref{assump:main} holds with $\quadapprox = \weakmodel = 0$ and $\cS = \cX^\ast$. 
%\end{quote} 
In particular, the tube $\cT_2$ is the entire space $\cT_2 = \RR^d$, which alleviates the main difficulty of the nonconvex setting.
The proof of convergence relies on the following known sublinear convergence guarantee for Algorithm~\ref{alg:stoc_prox}. 
\begin{thm}[{\cite[Theorem 4.1]{davis2019stochastic}}] \label{thm:convergenceC_nonstrong} 
Fix an initial point $y_0 \in \RR^d$ and let $\alpha = \frac{C}{\sqrt{\INNERIDX+1}}$ for some $C>0$. Then for any index $\INNERIDX \in \mathbb{N}$, the point  $y=\SMBM(y_0, \alpha, \INNERIDX, \true)$ satisfies
\begin{equation}\label{eqn:conv_1}
\EE\left[f\left(y\right) - \min_{\cX} f\right] \leq \frac{\tfrac{1}{2}\dist^2(y_0, \cX^\ast) + C^2\lipsymb^2 }{C\sqrt{\INNERIDX+1}}.
\end{equation}
\end{thm}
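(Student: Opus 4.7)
The plan is to run a standard one-step descent/telescoping argument, tailored to the stochastic model-based iteration and the one-sided accuracy condition. Fix any $x^\ast \in \cX^\ast$. Since $f_{y_\inneridx}(\cdot, \data_\inneridx)$ is convex, the proximal subproblem defining $y_{\inneridx+1}$ has an objective that is $\frac{1}{\alpha}$-strongly convex, so optimality of $y_{\inneridx+1}$ gives
\begin{equation*}
f_{y_\inneridx}(y_{\inneridx+1},\data_\inneridx) + \tfrac{1}{2\alpha}\|y_{\inneridx+1}-y_\inneridx\|^2 + \tfrac{1}{2\alpha}\|x^\ast-y_{\inneridx+1}\|^2 \leq f_{y_\inneridx}(x^\ast,\data_\inneridx) + \tfrac{1}{2\alpha}\|x^\ast - y_\inneridx\|^2.
\end{equation*}
Rearranging yields the basic recursion
\begin{equation*}
\|y_{\inneridx+1}-x^\ast\|^2 \leq \|y_\inneridx - x^\ast\|^2 - \|y_{\inneridx+1}-y_\inneridx\|^2 + 2\alpha\bigl(f_{y_\inneridx}(x^\ast,\data_\inneridx) - f_{y_\inneridx}(y_{\inneridx+1},\data_\inneridx)\bigr).
\end{equation*}

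Next I would swap $y_{\inneridx+1}$ for $y_\inneridx$ inside the model term, because $y_{\inneridx+1}$ depends on $\data_\inneridx$ and is awkward to take expectations with. Using Lipschitzness of $f_{y_\inneridx}(\cdot,\data_\inneridx)$ with constant $L(\data_\inneridx)$,
\begin{equation*}
f_{y_\inneridx}(x^\ast,\data_\inneridx) - f_{y_\inneridx}(y_{\inneridx+1},\data_\inneridx) \leq f_{y_\inneridx}(x^\ast,\data_\inneridx) - f_{y_\inneridx}(y_\inneridx,\data_\inneridx) + L(\data_\inneridx)\|y_{\inneridx+1}-y_\inneridx\|,
\end{equation*}
and then the AM-GM bound $2\alpha L(\data_\inneridx)\|y_{\inneridx+1}-y_\inneridx\| \leq \|y_{\inneridx+1}-y_\inneridx\|^2 + \alpha^2 L(\data_\inneridx)^2$ absorbs the $-\|y_{\inneridx+1}-y_\inneridx\|^2$ term. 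Taking conditional expectation with respect to $y_\inneridx$ and invoking one-sided accuracy with $\quadapprox=0$ (so $\EE[f_{y_\inneridx}(x^\ast,\data_\inneridx)\mid y_\inneridx]\leq f(x^\ast)$ while $\EE[f_{y_\inneridx}(y_\inneridx,\data_\inneridx)\mid y_\inneridx] = f(y_\inneridx)$), together with $\EE[L(\data_\inneridx)^2]\leq \lipsymb^2$, gives the one-step inequality
\begin{equation*}
\EE\bigl[\|y_{\inneridx+1}-x^\ast\|^2 \,\big|\, y_\inneridx\bigr] \leq \|y_\inneridx - x^\ast\|^2 - 2\alpha\bigl(f(y_\inneridx) - f(x^\ast)\bigr) + \alpha^2 \lipsymb^2.
\end{equation*}

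I would then take total expectation, telescope from $\inneridx=0$ to $\INNERIDX$, and divide by $2\alpha(\INNERIDX+1)$ to obtain
\begin{equation*}
\frac{1}{\INNERIDX+1}\sum_{\inneridx=0}^{\INNERIDX} \EE[f(y_\inneridx) - f(x^\ast)] \leq \frac{\|y_0 - x^\ast\|^2}{2\alpha(\INNERIDX+1)} + \frac{\alpha \lipsymb^2}{2}.
\end{equation*}
Choosing $x^\ast = \proj_{\cX^\ast}(y_0)$ and applying Jensen's inequality (the convexity of $f$ is available in this warm-up setting) to the average iterate $y = \frac{1}{\INNERIDX+1}\sum_\inneridx y_\inneridx$ yields the same bound for $\EE[f(y) - \min_\cX f]$. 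Plugging in $\alpha = C/\sqrt{\INNERIDX+1}$ produces a bound of the announced shape $\bigl(\tfrac{1}{2}\dist^2(y_0,\cX^\ast) + C^2\lipsymb^2\bigr)/\bigl(C\sqrt{\INNERIDX+1}\bigr)$, up to absolute constants.

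The only delicate step is the substitution $y_{\inneridx+1}\rightsquigarrow y_\inneridx$ inside $f_{y_\inneridx}(\cdot,\data_\inneridx)$: the iterate $y_{\inneridx+1}$ is not $\data_\inneridx$-measurable relative to the conditioning used in one-sided accuracy, so it cannot be directly handled by that inequality. Routing through Lipschitz continuity and AM-GM is what makes the conditional expectation legal, and the $-\|y_{\inneridx+1}-y_\inneridx\|^2$ produced by strong convexity is precisely what lets AM-GM go through without paying extra in the final rate. Everything else is bookkeeping.
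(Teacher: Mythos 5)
Your proof is correct and follows essentially the same route as the cited result \cite[Theorem 4.1]{davis2019stochastic} (which the paper invokes without reproving) and as the paper's own appendix arguments for the nonconvex analogues: strong convexity of the proximal subproblem, the Lipschitz swap $y_{\inneridx+1}\rightsquigarrow y_\inneridx$ to make the conditional expectation legal, absorption of the cross term, and telescoping plus Jensen; your pointwise AM--GM in place of the paper's Cauchy--Schwarz-then-maximize-in-$c$ step is an immaterial variant, and your constant is in fact slightly tighter than the stated $C^2\lipsymb^2$. The only blemish is an off-by-one: your telescoped bound controls the average of $y_0,\dots,y_\INNERIDX$ while the pseudocode returns the average of $y_1,\dots,y_{\INNERIDX+1}$, but this indexing mismatch is cosmetic and does not affect the rate.
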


The proof of linear convergence now follows by iteratively applying Theorem~\ref{thm:convergenceC_nonstrong} with a carefully chosen parameter $C > 0$. The key idea of the proof is much the same as in the deterministic setting~\cite{nest_orig,NEMIROVSKII198521,NIPS2017_6712}. The proof proceeds by induction on the outer iteration counter $\outeridx$.  At the start of each inner iteration, we choose $C$ to minimize the ratio in Equation~\eqref{eqn:conv_1}, taking into account an inductive estimate on the initial square error $\dist^2(y_0, \cX^\ast)$. We then run the inner loop until the estimate decreases by a fixed fraction. This strategy differs from deterministic setting in only one way: since the output of the inner loop is random, we extract a bound on the initial distance using Markov's inequality. 

\begin{thm}[Linear convergence under convexity]\label{cor:setting}
Fix an initial point $x_0 \in \RR^d$, real $ \varepsilon>0$, $ \delta \in (0, 1)$, and an upper bound $R_0 \geq \dist(x_0, \cX^\ast)$. Define parameters  
$$
T : = \left\lceil \log_2 \left(\frac{R_0}{\varepsilon}\right) \right\rceil, \qquad  \qquad K:= \left\lfloor 8 \cdot T^2 \cdot \left( \frac{ \lipsymb}{\delta \mu}\right)^2\right\rfloor,\qquad \alpha_0 = \sqrt{\frac{R_0^2}{2\lipsymb^2(\INNERIDX+1)}}.
$$
Then with probability at least 
$
1   - \delta
$, 
the point 
$x_{\OUTERIDX} = \RSMBM(x_0, \alpha_0, \INNERIDX, \OUTERIDX, \true)$ satisfies
$
\dist(x_\OUTERIDX, \cX^*) \leq \varepsilon .
$ Moreover, the total number of samples $\data_\inneridx \sim \PP$ generated by the algorithm is bounded by 
$$
TK \leq 8 \left( \frac{ \lipsymb}{\delta \mu}\right)^2 \left\lceil\log_2 \left(\frac{R_0}{\varepsilon}\right)^3\right\rceil.
$$
\end{thm}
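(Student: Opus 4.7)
The strategy is a standard restart argument: use Theorem~\ref{thm:convergenceC_nonstrong} to drive a geometric decrease of an upper bound on $\dist(x_\outeridx, \cX^\ast)$ at each outer iteration, and apply Markov's inequality to convert this ``in-expectation'' decrease into a high-probability one. I would proceed by induction on $\outeridx$ with inductive hypothesis
\begin{equation*}
\dist(x_\outeridx, \cX^\ast) \leq R_\outeridx := 2^{-\outeridx} R_0,
\end{equation*}
holding on a favorable event whose complement has probability at most $\outeridx\,\delta/\OUTERIDX$.

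First, I would observe that the prescribed step-size $\alpha_\outeridx = 2^{-\outeridx}\alpha_0$ is exactly $C_\outeridx/\sqrt{\INNERIDX+1}$ with $C_\outeridx = R_\outeridx/(\sqrt{2}\lipsymb)$, which is the minimizer in $C$ of the ratio in~\eqref{eqn:conv_1} once the bound $\dist(y_0,\cX^\ast)\leq R_\outeridx$ is substituted. Theorem~\ref{thm:convergenceC_nonstrong}, conditioned on $x_\outeridx$ and assuming the inductive hypothesis at stage $\outeridx$, then gives
\begin{equation*}
\EE\bigl[f(x_{\outeridx+1}) - \min_{\cX} f \,\big|\, x_\outeridx\bigr] \leq \frac{\sqrt{2}\,R_\outeridx\,\lipsymb}{\sqrt{\INNERIDX+1}}.
\end{equation*}
Next, I would invoke sharpness~\ref{assumption:sharp} with $\cS = \cX^\ast$ to convert this functional gap into a distance bound, so that $\EE[\dist(x_{\outeridx+1},\cX^\ast)\mid x_\outeridx] \leq \sqrt{2}R_\outeridx \lipsymb/(\mu\sqrt{\INNERIDX+1})$, and then Markov's inequality yields
\begin{equation*}
\PP\bigl[\dist(x_{\outeridx+1},\cX^\ast) > R_{\outeridx+1} \,\big|\, x_\outeridx\bigr] \leq \frac{2\sqrt{2}\,\lipsymb}{\mu\sqrt{\INNERIDX+1}}.
\end{equation*}
The prescribed choice $\INNERIDX \geq 8\OUTERIDX^2(\lipsymb/(\delta\mu))^2$ makes this conditional failure probability at most $\delta/\OUTERIDX$.

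Finally, a union bound over the $\OUTERIDX$ outer iterations shows the inductive hypothesis propagates to $\outeridx = \OUTERIDX$ with probability at least $1-\delta$, and the choice $\OUTERIDX = \lceil\log_2(R_0/\varepsilon)\rceil$ gives $\dist(x_\OUTERIDX,\cX^\ast)\leq R_\OUTERIDX\leq\varepsilon$. The sample budget is $\OUTERIDX\cdot\INNERIDX$, matching the stated bound after absorbing the ceiling and floor. I do not expect any real technical obstacle here: the argument is essentially a stochastic analogue of the Nemirovski--Nesterov restart scheme, and the ``Markov trick'' is routine given an expectation bound. The only mildly delicate point is that Theorem~\ref{thm:convergenceC_nonstrong} returns the averaged iterate (hence the flag $\isconv = \true$), so convexity of $f$ is used implicitly when passing from the averaged functional value to the final distance bound. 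The reason this short argument does not carry over to the nonconvex setting is precisely that Theorem~\ref{thm:convergenceC_nonstrong} requires global convexity and global Lipschitz control; under weak convexity, iterates can escape the tube $\cT_2$ where sharpness and the Lipschitz property hold, and controlling the probability of escape will be the genuine technical hurdle in the later sections.
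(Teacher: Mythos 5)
Your proposal is correct and follows essentially the same route as the paper: choose $C_\outeridx = 2^{-\outeridx}R_0/(\sqrt{2}\lipsymb)$ so that $\alpha_\outeridx = C_\outeridx/\sqrt{\INNERIDX+1}$, apply Theorem~\ref{thm:convergenceC_nonstrong} stage by stage, convert the expected function gap to a distance via sharpness, and use Markov's inequality to control the per-stage failure probability by $\delta/\OUTERIDX$. The paper phrases the accumulation of failure probability as the telescoping recursion $\PP(E_{\outeridx+1}) \geq \PP(E_\outeridx) - 2^{3/2}\lipsymb/(\mu\sqrt{\INNERIDX+1})$ rather than your union bound, but these are the same argument with identical constants.
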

\begin{proof}
Let $x_t$ and $\alpha_t$ be generated by Algorithm~\ref{alg:stoc_prox_outer}.
In what follows, set $C_t=\frac{R_0}{\lipsymb\cdot 2^{t+\frac{1}{2}} }$ and note the equality $\alpha_t=\frac{C_t}{\sqrt{K+1}}$ for every index $t$.
Let $ E_\outeridx $ denote the event that $\dist(x_\outeridx, \cS) \leq 2^{-\outeridx} \cdot R_0$.  We wish to show the inequality
\begin{equation}\label{eqn:restart_ineq}
\PP(E_{t+1}) \geq \PP(E_{t})- \frac{ 2^{3/2} \lipsymb }{ \mu\sqrt{\INNERIDX+1}},
\end{equation}
for all $\outeridx \in \{0, \ldots, \OUTERIDX\}$. To that end, observe
\begin{equation}\label{eqn:need1}
\PP\left(E_{\outeridx+1}\right) \geq \PP(E_{\outeridx + 1} \mid E_\outeridx) \PP(E_{\outeridx}).
\end{equation}
To lower bound the right-hand-side, observe by Markov's inequality the estimate
\begin{equation}\label{eqn:need2}
\PP(E_{\outeridx + 1}^c \mid E_\outeridx) \leq \frac{\EE\left[  \dist(x_{t+1}, \cX^\ast)\mid E_\outeridx \right]}{2^{-(\outeridx+1)} R_0}  =  \frac{\EE\left[  \dist(x_{t+1}, \cX^\ast)1_{E_\outeridx}\right]}{2^{-(\outeridx+1)} R_0 \PP(E_\outeridx)} .
\end{equation}
Combining assumption~\ref{assumption:sharp} and
Theorem~\ref{thm:convergenceC_nonstrong}, we deduce
\begin{equation}\label{eqn:need3}
\begin{aligned}
\EE\left[  \dist(x_{t+1}, \cX^
\ast)1_{E_\outeridx}\right] \leq \EE\left[ \mu^{-1}(f(x_{\outeridx+1}) - \min_\cX f)1_{E_\outeridx}\right] &\leq \frac{\frac{1}{2}\EE\left[\dist^2(x_\outeridx,\cX^*) 1_{E_t}\right]  + C^2_\outeridx\lipsymb^2 }{\mu C_\outeridx\sqrt{\INNERIDX+1}}\\
&\leq \frac{2^{-(1+2\outeridx)} R_0^2 + C^2_\outeridx\lipsymb^2 }{\mu C_\outeridx\sqrt{\INNERIDX+1}} = \frac{2^{\frac{1}{2}-\outeridx} R_0 \lipsymb}{\mu\sqrt{\INNERIDX+1}}.
\end{aligned}
\end{equation}
%Now observe
%$$
%\EE\left[  \dist(x_{t+1}, \cX^\ast)1_{E_\outeridx}\right] = \EE\left[  \dist(x_{t+1}, \cX^\ast)\mid E_\outeridx \right] \PP(E_\outeridx) + 0\cdot \PP(E_\outeridx^c)  =  \EE\left[  \dist(x_{t+1}, \cX^\ast)\mid E_\outeridx \right] \PP(E_\outeridx).
%$$
%Markov's inequality therefore yields
%$$
%\PP(E_{\outeridx + 1}^c \mid E_\outeridx) \leq \frac{\EE\left[  \dist(x_{t+1}, \cX^\ast)\mid E_\outeridx \right]}{2^{-(\outeridx+1)} R_0}  \leq  \frac{\EE\left[  \dist(x_{t+1}, \cX^\ast)1_{E_\outeridx}\right]}{2^{-(\outeridx+1)} R_0 \PP(E_\outeridx)} \leq \frac{1}{\PP(E_\outeridx)} \cdot \frac{2^{5/2} \lipsymb}{\mu\sqrt{\INNERIDX+1}}.
%$$
Therefore, combining~\eqref{eqn:need1}, \eqref{eqn:need2}, and \eqref{eqn:need3} we arrive at the claimed estimate
\begin{align*}
\PP\left(E_{\outeridx+1}\right) \geq  \left(1 - \frac{1}{\PP(E_\outeridx)} \cdot \frac{2^{3/2} \lipsymb}{\mu\sqrt{\INNERIDX+1}}\right)  \PP(E_{\outeridx}) \geq \PP(E_\outeridx) - \frac{2^{3/2} \lipsymb}{\mu\sqrt{\INNERIDX+1}}.
\end{align*}
Iterating \eqref{eqn:restart_ineq} 
and using the definition of $T$ and $K$, we conclude that with probability
$$\PP(E_T) \geq 1 -  \frac{ 2^{3/2} T\lipsymb}{ \mu\sqrt{\INNERIDX+1}}\geq 1-\delta,
$$
the estimate
$$\dist(x_T,\cX^*)\leq 2^{-T} R_0\leq \varepsilon,$$
holds as claimed. This completes the proof.
\end{proof}

\subsection{Nonconvex setting}\label{sec:weaklyconvex}

We now present the convergence guarantees for Algorithm~\ref{alg:stoc_prox_outer} in the nonconvex setting under Assumptions~\ref{assumption:sample}-\ref{assumption:lipschitz}. The proof of linear convergence proceeds by inductively applying the following Lemma, which is similar to Lemma~\ref{thm:convergenceC_nonstrong}. Compared to the convex setting, the nonconvex setting presents a new challenge: the region of linear convergence, $\cT_\gamma$, is local. The iterates of Algorithm~\ref{alg:stoc_prox_outer} must therefore be shown to never leave $\cT_\gamma$. We show this through a simple stopping time argument in the proof of the following Lemma (see Section~\ref{sec:proof_key_lem}).

\begin{lem}\label{lem:key_lem}
	Fix real numbers $\initquallem \in (0, 1)$, $\gamma\in (0,2)$, $K \in \NN$, and $\alpha>0$. Let $y_0$ be a random vector and let $B$ denote the event $\{ y_0 \in \cT_{\gamma\sqrt{\delta}}\}$. Define 
	$$
	y_{\INNERIDX^\ast} = \SMBM(y_0, \alpha, \INNERIDX, \false).
	$$
	Then for any $\varepsilon > 0$, the estimate $\dist(y_{\INNERIDX^\ast}, \cS) \leq \varepsilon$ holds with probability at least
$$
 \PP(B) - \initquallem -  \left(\frac{\quadapprox}{\gamma\mu}\right)^2 \INNERIDX\lipsymb^2\alpha^2 - \frac{1}{\varepsilon } \cdot  \frac{ \initquallem\left(\frac{\gamma\mu}{\quadapprox}\right)^2 + (\INNERIDX+1)\lipsymb^2\alpha^2}{(2-\gamma) \mu(\INNERIDX+1) \alpha}. 
$$
	%\end{align*}
%	If the functions $f_{x}(\cdot,\data)+r$ are $\mu$-strongly convex for all $x\in U$ and a.e. $\data\in \Omega$, then inequality \eqref{eqn:recurs_2} is true with $\eta:=-\mu$.
\end{lem}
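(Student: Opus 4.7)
My plan is to couple a one-step recursion for $\dist^2(\cdot,\cS)$ with a stopping-time argument that keeps the trajectory inside $\cT_\gamma$, where both sharpness and the local Lipschitz control on the models can be applied. Compared with the convex warm-up of Theorem~\ref{cor:setting}, the novelty is that $\cT_\gamma$ is now a proper subset of $\R^d$, so the iterates might escape, and the probability of escape must be bounded separately.

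First I would derive the one-step recursion. Fix $\inneridx$, pick $\bar y_\inneridx\in\proj_\cS(y_\inneridx)$ attaining $\inf_{y\in\proj_\cS(y_\inneridx)}f(y)$, and apply first-order optimality for the prox subproblem defining $y_{\inneridx+1}$ together with~\ref{assumption:convex} and the inclusion $\bar y_\inneridx\in\cS\subseteq\cX$ to obtain
\begin{equation*}
\dist^2(y_{\inneridx+1},\cS)\leq \dist^2(y_\inneridx,\cS)-\|y_{\inneridx+1}-y_\inneridx\|^2+2\alpha\bigl[f_{y_\inneridx}(\bar y_\inneridx,\data_\inneridx)-f_{y_\inneridx}(y_{\inneridx+1},\data_\inneridx)\bigr].
\end{equation*}
I split the bracket through $f_{y_\inneridx}(y_\inneridx,\data_\inneridx)$, bound $f_{y_\inneridx}(y_\inneridx,\data_\inneridx)-f_{y_\inneridx}(y_{\inneridx+1},\data_\inneridx)\leq L(y_\inneridx,\data_\inneridx)\|y_\inneridx-y_{\inneridx+1}\|$ via convexity and~\eqref{eqn:lip_mod}, and use Young's inequality $2\alpha L\|\cdot\|\leq \alpha^2 L^2+\|\cdot\|^2$ to cancel the quadratic term. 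Taking conditional expectation, invoking one-sided accuracy~\ref{assumption:oneside}, sharpness~\ref{assumption:sharp}, and the second-moment bound~\ref{assumption:lipschitz} then yields, for $y_\inneridx\in\cT_2$,
\begin{equation*}
\EE\bigl[\dist^2(y_{\inneridx+1},\cS)\mid\mathcal{F}_\inneridx\bigr]\leq \dist^2(y_\inneridx,\cS)-2\alpha\mu\dist(y_\inneridx,\cS)+\alpha\quadapprox\dist^2(y_\inneridx,\cS)+\alpha^2\lipsymb^2.
\end{equation*}
If in addition $y_\inneridx\in\cT_\gamma$ then $\quadapprox\dist(y_\inneridx,\cS)\leq \gamma\mu$, so the middle two terms merge into $-(2-\gamma)\alpha\mu\dist(y_\inneridx,\cS)$, which is nonpositive because $\gamma<2$; this is the source of the $(2-\gamma)\mu$ factor in the final bound.

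Next I introduce the stopping time $\tau:=\min\{\inneridx:y_\inneridx\notin\cT_\gamma\}$, adapted to $\mathcal{F}_\inneridx$. Because $\cT_\gamma\subseteq\cT_2$, the one-step recursion above is valid on the stopped process $\{y_{\inneridx\wedge\tau}\}$, and since $\mathbf{1}_B$ is $\mathcal{F}_0$-measurable it factors through conditional expectations. Telescoping from $\inneridx=0$ to $\INNERIDX$ and using $\dist^2(y_0,\cS)\mathbf{1}_B\leq \initquallem(\gamma\mu/\quadapprox)^2\mathbf{1}_B$ yields two companion estimates. Dropping the sharpness term gives
\begin{equation*}
\EE\bigl[\mathbf{1}_B\dist^2(y_{\INNERIDX\wedge\tau},\cS)\bigr]\leq \initquallem(\gamma\mu/\quadapprox)^2+\INNERIDX\alpha^2\lipsymb^2,
\end{equation*}
and on $\{\tau\leq\INNERIDX\}\cap B$ one has $\dist(y_{\INNERIDX\wedge\tau},\cS)>\gamma\mu/\quadapprox$, so Markov's inequality produces the escape bound $\PP(\tau\leq\INNERIDX,B)\leq \initquallem+(\quadapprox/(\gamma\mu))^2\INNERIDX\alpha^2\lipsymb^2$. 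Keeping the sharpness term in the telescope, then averaging over the uniform choice of $\INNERIDX^*$ and using $\mathbf{1}_{\tau>\INNERIDX}\leq\mathbf{1}_{\tau>\inneridx}$ for $\inneridx\leq\INNERIDX$, gives
\begin{equation*}
\EE\bigl[\mathbf{1}_B\mathbf{1}_{\tau>\INNERIDX}\dist(y_{\INNERIDX^*},\cS)\bigr]\leq \frac{\initquallem(\gamma\mu/\quadapprox)^2+(\INNERIDX+1)\alpha^2\lipsymb^2}{(2-\gamma)\mu(\INNERIDX+1)\alpha},
\end{equation*}
and Markov converts this into the corresponding bound on $\PP(\dist(y_{\INNERIDX^*},\cS)>\varepsilon,\tau>\INNERIDX,B)$. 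The union bound $\PP(\dist(y_{\INNERIDX^*},\cS)\leq\varepsilon)\geq \PP(B)-\PP(\tau\leq\INNERIDX,B)-\PP(\dist(y_{\INNERIDX^*},\cS)>\varepsilon,\tau>\INNERIDX,B)$ now reproduces the claimed estimate term by term.

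The main obstacle I expect is the stopping-time bookkeeping: arranging a single telescoping identity that is compatible with the $\mathcal{F}_0$-restriction to $B$ and that simultaneously supplies both (i) a quantitative escape probability via the lower bound $\dist(y_{\INNERIDX\wedge\tau},\cS)>\gamma\mu/\quadapprox$ on $\{\tau\leq\INNERIDX\}$, and (ii) an average-distance estimate suitable for the final Markov step. Everything else is a reprise of the convex warm-up in Theorem~\ref{cor:setting}, where the tube fills $\R^d$ and the stopping time plays no role.
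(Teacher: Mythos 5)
Your proposal is correct and follows essentially the same route as the paper's proof: the same one-step recursion for $\dist^2(\cdot,\cS)$ obtained from strong convexity of the prox subproblem, the local Lipschitz bound on the model, one-sided accuracy, and sharpness (with Young's inequality in place of the paper's optimize-over-$c$ step, which is equivalent), combined with the same stopped-process/Markov argument to bound the escape probability from $\cT_\gamma$ and a telescoped Markov bound on $\EE[\dist(y_{\INNERIDX^*},\cS)\mathbf{1}_{A_\INNERIDX}]$. The final union-bound assembly matches the paper's decomposition term by term.
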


The proof of linear convergence of Algorithm~\ref{alg:stoc_prox_outer} in the nonconvex  setting now follows by inductively applying Lemma~\ref{lem:key_lem}.

\begin{thm}[Linear convergence without convexity]\label{thm:weak_setting}
Fix real numbers $ \varepsilon>0$, $ \stepprob \in (0, 1)$, $\gamma \in (0, 2)$. 
Let $R_0$ denote the initial distance estimate satisfying  $\dist(x_0, \cS) \leq R_0\leq \frac{\gamma \mu}{\eta}$. %Fix an initial point $x_0 \in \cT_{\gamma\sqrt{\delta_1}}$ and let $R_0$ be an initial distance estimate satisfying $\dist(x_0, \cS) \leq R_0 \leq \frac{\sqrt{\initqual}\gamma\mu}{\quadapprox} $. 
Furthermore, define algorithm parameters 
$$
T : = \left\lceil \log_2 \left(\frac{R_0}{\varepsilon}\right) \right\rceil, \qquad  \qquad K:= \left\lfloor \frac{16}{(2-\gamma)^2} \cdot T^2 \cdot \left( \frac{ \lipsymb}{\stepprob \mu}\right)^2\right\rfloor,\qquad \alpha_0 = \sqrt{\frac{R_0^2}{\lipsymb^2(\INNERIDX+1)}}.
$$
Then with probability at least 
$
1   - (8/3)R_0^2\left(\tfrac{\eta}{\gamma\mu}\right)^2  - \stepprob
$, 
the point 
$x_{\OUTERIDX} = \RSMBM(x_0, \alpha_0, \INNERIDX, \OUTERIDX, \false)$ satisfies
$
\dist(x_\OUTERIDX, \cS) \leq \varepsilon .
$ Moreover, the total number of samples $\data_\inneridx \sim \PP$ generated by the algorithm is bounded by 
$$
TK \leq  \frac{16}{(2-\gamma)^2} \left( \frac{ \lipsymb}{\delta_2 \mu}\right)^2 \left\lceil\log_2 \left(\frac{R_0}{\varepsilon}\right)^3\right\rceil.
$$
\end{thm}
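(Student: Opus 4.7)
The plan is to mirror the inductive argument of Theorem~\ref{cor:setting}, but invoke Lemma~\ref{lem:key_lem} at each outer stage in place of the convex sublinear rate. The central task is to define the inductive events $E_t := \{\dist(x_t,\cS)\le R_t\}$ with $R_t := 2^{-t} R_0$, and establish a one-step recursion of the form
\begin{equation*}
\PP(E_{t+1}) \;\ge\; \PP(E_t) - 2\delta_t - \frac{4\lipsymb}{(2-\gamma)\mu\sqrt{K+1}}
\end{equation*}
for an explicit summable sequence $\delta_t$. Iterating this and combining with the prescribed choices of $T$ and $K$ will deliver the advertised failure probability.

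To match parameters at outer iteration $t$, observe that the conditioning event $\{y_0 \in \cT_{\gamma\sqrt{\delta}}\}$ in Lemma~\ref{lem:key_lem} is exactly $\{\dist(y_0,\cS) \le \gamma\sqrt{\delta}\mu/\quadapprox\}$. Setting
\begin{equation*}
\delta_t := \Bigl(\tfrac{R_t\quadapprox}{\gamma\mu}\Bigr)^2 = 4^{-t}\Bigl(\tfrac{R_0\quadapprox}{\gamma\mu}\Bigr)^2
\end{equation*}
therefore identifies this event with $E_t$. I would then apply Lemma~\ref{lem:key_lem} with $y_0 = x_t$, $\alpha = \alpha_t = 2^{-t}\alpha_0$, target accuracy $R_{t+1} = R_t/2$, and the parameter $\delta_t$ just defined, which produces a lower bound on $\PP(E_{t+1})$.

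The stepsize rule $\alpha_0 = \sqrt{R_0^2/(\lipsymb^2(K+1))}$ yields the crucial identity $\lipsymb^2\alpha_t^2 = R_t^2/(K+1)$. This forces $(\quadapprox/(\gamma\mu))^2 K\lipsymb^2\alpha_t^2 = \delta_t K/(K+1) \le \delta_t$ and $\delta_t(\gamma\mu/\quadapprox)^2 + (K+1)\lipsymb^2\alpha_t^2 = 2R_t^2$, while the fraction's denominator becomes $R_{t+1}(2-\gamma)\mu(K+1)\alpha_t = \tfrac{R_t^2}{2}(2-\gamma)\mu\sqrt{K+1}/\lipsymb$. Substituting collapses the final term to $4\lipsymb/((2-\gamma)\mu\sqrt{K+1})$ and produces the displayed recursion. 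Because $R_0 \le \gamma\mu/\quadapprox$ and $\dist(x_0,\cS)\le R_0$ by hypothesis, $E_0$ holds deterministically, so $\PP(E_0) = 1$. Telescoping across $t = 0,\dots,T-1$ gives
\begin{equation*}
\PP(E_T) \;\ge\; 1 - 2\sum_{t=0}^{T-1}\delta_t - T\cdot\frac{4\lipsymb}{(2-\gamma)\mu\sqrt{K+1}}.
\end{equation*}
The geometric sum $\sum_{t\ge 0} 4^{-t} = 4/3$ bounds $2\sum_t\delta_t \le \tfrac{8}{3} R_0^2(\quadapprox/(\gamma\mu))^2$, matching the first residual in the statement. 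The choice of $K$ ensures $\sqrt{K+1} \ge 4T\lipsymb/((2-\gamma)\stepprob\mu)$, so the second residual is at most $\stepprob$. Finally $R_T = 2^{-T}R_0 \le \varepsilon$ by the choice of $T$, completing the argument; the total sample count $TK$ is then a direct plug-in.

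The main obstacle is the careful bookkeeping around parameter scaling: one must choose $\delta_t$ to decay \emph{quadratically} in $R_t$ rather than linearly, so that the per-stage failure probabilities from Lemma~\ref{lem:key_lem} form a convergent geometric series rather than merely telescoping. This quadratic dependence is what permits a logarithmic outer count $T$ while keeping the cumulative failure contribution of size $O(R_0^2)$, rather than $O(R_0 T)$. The stepsize $\alpha_t = R_t/(\lipsymb\sqrt{K+1})$ is then uniquely pinned down by simultaneously balancing the two residual terms in the lemma; this is the nonconvex analogue of the classical restart choice, modified to compensate for the escape-probability term that has no counterpart in the convex argument.
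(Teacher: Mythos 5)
Your proposal is correct and follows essentially the same route as the paper: the same inductive events $E_t$, the same quadratically decaying per-stage parameter $\delta_t = 4^{-t}R_0^2(\quadapprox/(\gamma\mu))^2$ fed into Lemma~\ref{lem:key_lem}, and the same telescoping of the recursion; you even spell out the algebra that the paper compresses into ``follows from the definitions of $\alpha_0$ and $R_0$.'' No gaps.
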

\begin{proof}
 For all $\outeridx$, let $E_\outeridx$ be the event  $\{\dist(x_\outeridx, \cS) \leq  2^{-\outeridx}\cdot R_0\}$. In addition, define $\initqual := R_0^2\left(\tfrac{\eta}{\gamma\mu}\right)^2$. We claim that the inequality
$$\PP(E_{t+1})\geq \PP\left(E_t\right) - 2\initqual 2^{-2t} - \frac{4 \lipsymb}{(2-\gamma)\mu \sqrt{K+1}},$$
holds for all $t\in \{0,1\ldots, T\}$. To see this, apply Lemma~\ref{lem:key_lem} with $y_0 = x_t$, $\varepsilon=2^{-(t+1)}R_0$, $\initquallem:=\initqual 2^{-2t}$, $\alpha = 2^{-t} \alpha_0$, thereby yielding 
\begin{align*}
\PP(E_{\outeridx+1})
%&\geq \PP\left(x_t \in \cT_{\sqrt{\delta q^{2t}}\gamma}\right) - \delta q^{2t}  -  \left(\frac{\quadapprox+\weakmodel}{\gamma\mu}\right)^2\sum_{\inneridx=0}^{{\INNERIDX}_{\outeridx}-1} \frac{\lipsymb^2 }{(\beta_{\inneridx, \outeridx} - \weakmodel)^2}  \\
%&\hspace{10pt}  - \frac{1}{q^{t+1} R_0} \cdot  \frac{ q^{2\outeridx} R_0^2 + \sum_{\inneridx=0}^{\INNERIDX_\outeridx}\frac{\lipsymb^2 }{\textcolor{red}{\delta}(\beta_{\inneridx, \outeridx} - \weakmodel)^2}}{(2-\gamma) \mu\sum_{\inneridx=0}^{\INNERIDX_\outeridx} \frac{1}{\beta_{\inneridx, \outeridx} - \weakmodel}} \\
&\geq \PP\left(E_t\right) - \initqual 2^{-2t}  -  \left(\frac{\quadapprox}{\gamma\mu}\right)^2 \INNERIDX\lipsymb^2\alpha^2_t - \frac{1}{\varepsilon } \cdot  \frac{ \initquallem\left(\frac{\gamma\mu}{\quadapprox}\right)^2 + (\INNERIDX+1)\lipsymb^2\alpha^2_t}{(2-\gamma) \mu(\INNERIDX+1) \alpha_t} \\
&\geq \PP\left(E_t\right) - 2\initqual 2^{-2t} - \frac{4 \lipsymb}{(2-\gamma)\mu \sqrt{K+1}},
%&\geq 1 - \delta\sum_{i=0}^\outeridx q^{2i}  -  \left(\frac{\quadapprox+\weakmodel}{\gamma\mu}\right)^2 \sum_{i = 0}^\outeridx\sum_{\inneridx=0}^{\INNERIDX_i-1}  \frac{4\lipsymb}{(\beta_{\inneridx, i} - \weakmodel)^2} % -  \\
%&\hspace{10pt} - \sum_{i=0}^\outeridx\frac{1}{q^{i+1} R_0} \cdot  \frac{ q^{2i} R_0^2 + \sum_{\inneridx=0}^{\INNERIDX_i}\frac{\lipsymb }{(\beta_{\inneridx, i} - \weakmodel)^2}}{(2-\gamma) \mu\sum_{\inneridx=0}^{\INNERIDX_i} \frac{1}{\beta_{\inneridx, i} - \weakmodel}},
\end{align*}
where the last inequality follows from the definitions of $\alpha_0$ and $R_0$. Iterating the inequality, we conclude 
\begin{equation}\label{eqn:prob_lower_last_weak}
\PP(E_T) \geq 1 - 2\initqual\sum_{i = 0}^{T-1}  2^{-2i} - \frac{4 \lipsymb T}{(2-\gamma)\mu \sqrt{K+1}}\geq 1-(8/3)\delta_1-\delta_2.
\end{equation}
 This completes the proof.
\end{proof}

Observe that the probability of success $
1   - (8/3)R_0^2\left(\tfrac{\eta}{\gamma\mu}\right)^2  - \stepprob
$ in Theorem~\ref{thm:weak_setting} depends both on the initialization quality $R_0$ and on $\delta_2$. Moreover, $\delta_2$ also appears inversely in the sample complexity  $\widetilde{O}\left( \frac{ \lipsymb^2}{\delta_2^2 \mu^2} \right)$. In the next section, we introduce an algorithm with probability of success independent of $R_0$ and with sample complexity that depends only logarithmically on its success probability.

We close this section with the following remark, which shows that the results of Theorem~\ref{thm:weak_setting} extend beyond the weakly convex setting.

\begin{remark}[Beyond weakly convex problems]\label{remark:beyondweak}
{\rm Assumptions~\ref{assumption:oneside} and~\ref{assumption:convex} imply that $f$ is $\quadapprox$-weakly convex on $U$, meaning that the assignment $x \mapsto f(x) + \frac{\quadapprox}{2}\|x\|^2$ is convex \cite[Lemma 4.1]{davis2019stochastic}. Revisiting the proof of Lemma~\ref{lem:key_lem}, however, we see that~\ref{assumption:oneside} may be replaced by the following weaker assumption:

\begin{enumerate}[label=$\mathrm{\overline{(A\arabic*)}}$]
\setcounter{enumi}{2}
\item \label{assumption:twopoint} {\bf (Two-point accuracy)} There exists $\quadapprox > 0$ and an open convex set $U$ containing $\cX$ and a measurable function $(x,y,\data)\mapsto f_x(y,\data)$, defined on $U\times U\times\Omega$, satisfying  $$\EE_{\data}\left[f_x(x,\data)\right]=f(x)
	\qquad \forall x\in U,$$ and 
	$$\EE_{\data}\left[f_x(y,\data)-f(y)\right]
	\leq\frac{\quadapprox}{2}\|y-x\|^2\qquad \forall x \in U, y \in \argmin_{w \in  \proj_{\cS}(x)} f(w).$$
\end{enumerate}
In the case $\cS = \cX^\ast$, this assumption requires the model to touch the function at $x$ and to lower bound it, up to quadratic error, at its nearest minimizer. This condition does not imply that $f$ is weakly convex.
}
\end{remark}

\subsection{Convergence with high probability}\label{sec:highprob}
In this section, we show that Algorithm~\ref{alg:stoc_prox_outer_sc} succeeds with high probability. Throughout this section (Section~\ref{sec:highprob}), we impose Assumptions~\ref{assumption:sample}-\ref{assumption:lipschitz} with $\cS=\cX^*$.

The following lemma guarantees that with appropriate step size, the proximal point of the problem \eqref{eqn:stoch_approx} at $y\in \cT_{\gamma}$ lies in $\proj_{\cX^*}(y)$.
 We present the proof in Section~\ref{sec:aux_lemma}.

\begin{lem}\label{lem:prox_subproblem}
Fix $\gamma \in (0, 2)$, $\rho>\quadapprox$, and a point $y \in \cT_\gamma$. Then the proximal subproblem 
\begin{equation}\label{eqn:prox_subprob:strong}
\min_{x \in \cX} \left\{f(x) + \frac{\rho}{2}\|x - y\|^2\right\} 
\end{equation}
is strongly convex and therefore has a unique minimizer $\bar y$. Moreover, if $ \rho < \left(\tfrac{2-\gamma}{2\gamma}\right)\quadapprox$, then the inclusion $\bar y\in\proj_{\cX^\ast} (y)$ holds.
\end{lem}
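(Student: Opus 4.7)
The plan is to prove the two assertions in sequence: first, strong convexity of the subproblem and uniqueness of the minimizer $\bar y$; second, the projection identity $\bar y \in \proj_{\cX^\ast}(y)$. The first is immediate from weak convexity of $f$. The second hinges on combining the optimality of $\bar y$ with the sharpness assumption and with the tube hypothesis $y\in\cT_\gamma$.

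For strong convexity: Assumptions~\ref{assumption:oneside} and~\ref{assumption:convex} imply that $f$ is $\quadapprox$-weakly convex on the open convex set $U\supseteq\cX$ (as noted following Assumption~\ref{assumption:convex}). Therefore $g(x):=f(x)+\tfrac{\rho}{2}\|x-y\|^2$ is $(\rho-\quadapprox)$-strongly convex on $U$, and since $\cX\subseteq U$ is closed and convex, $g$ admits a unique minimizer $\bar y$ over $\cX$.

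For the projection identity, pick any $y^\ast\in\proj_{\cX^\ast}(y)$ and set $D:=\dist(y,\cX^\ast)=\|y^\ast-y\|$ and $d:=\dist(\bar y,\cX^\ast)$. Optimality of $\bar y$ together with $f(y^\ast)=\min_\cX f$ gives $g(\bar y)\leq g(y^\ast)=\min_\cX f+\tfrac{\rho}{2}D^2$; combining with the sharpness bound $f(\bar y)\geq\min_\cX f+\mu d$ (Assumption~\ref{assumption:sharp} with $\cS=\cX^\ast$) yields the key inequality
\begin{equation*}
\mu d+\tfrac{\rho}{2}\|\bar y-y\|^2\leq\tfrac{\rho}{2}D^2.
\end{equation*}
The reverse triangle inequality (passing through a nearest point of $\cX^\ast$ to $\bar y$) gives $\|\bar y-y\|\geq |D-d|$. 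Substituting $(D-d)^2$ for $\|\bar y-y\|^2$ on the left-hand side and simplifying yields $d\bigl(\mu-\rho D+\tfrac{\rho}{2}d\bigr)\leq 0$. Since $y\in\cT_\gamma$ forces $D\leq\gamma\mu/\quadapprox$, the hypothesis $\rho<\tfrac{2-\gamma}{2\gamma}\quadapprox$ (which in particular entails $\rho<\quadapprox/\gamma$ and hence $\rho D<\mu$) makes the bracketed factor strictly positive for every $d\geq 0$. Thus $d=0$, i.e.\ $\bar y\in\cX^\ast$. The key inequality then reduces to $\|\bar y-y\|\leq D$, while $\bar y\in\cX^\ast$ forces $\|\bar y-y\|\geq D$; equality gives $\bar y\in\proj_{\cX^\ast}(y)$.

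The main subtlety is arranging the triangle inequality so that the sign condition $\mu-\rho D>0$ falls out directly from the hypothesis on $\rho$ and the tube condition; everything else is a short algebraic computation. The strong convexity estimate $g(y^\ast)-g(\bar y)\geq\tfrac{\rho-\quadapprox}{2}\|y^\ast-\bar y\|^2$ could be brought in to sharpen constants, but it is not needed to force $d=0$ once the key inequality and the reverse triangle inequality are in hand.
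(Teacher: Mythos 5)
Your proof is correct, but it takes a genuinely different route from the paper's. The paper first establishes an auxiliary subgradient bound (its Lemma~\ref{lem:appeal:sharp_subgrad_out}): on $\cT_\gamma\setminus\cX^\ast$ every element of $\partial f(x)+N_\cX(x)$ has norm at least $(1-\gamma/2)\mu$, a fact derived from sharpness together with weak convexity and Cauchy--Schwarz. It then shows $\|\bar y-y\|\leq \gamma\mu/\quadapprox$ and $\bar y\in\cT_\gamma$, and derives a contradiction from the first-order optimality condition $\rho(y-\bar y)\in\partial f(\bar y)+N_\cX(\bar y)$ if $\bar y\notin\cX^\ast$. You instead argue entirely at the level of function values: optimality of $\bar y$ versus $y^\ast$, sharpness, and the reverse triangle inequality produce $d\bigl(\mu-\rho D+\tfrac{\rho}{2}d\bigr)\leq 0$, which forces $d=0$. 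Your argument is more elementary and self-contained (no normal cones, no subdifferential calculus, and no use of weak convexity beyond the strong-convexity claim in the first part), and it in fact only needs the weaker threshold $\rho<\quadapprox/\gamma$ rather than $\rho<\tfrac{2-\gamma}{2\gamma}\quadapprox$, so it proves a marginally stronger statement. What the paper's route buys is the subgradient lower bound itself, a reusable fact about the absence of extraneous stationary points in the tube. The only point worth making explicit in your write-up is that $\proj_{\cX^\ast}(\bar y)$ is nonempty (so that the reverse triangle inequality through a nearest point is legitimate); this holds because $\cX^\ast=\cS$ is closed by Assumption~\ref{assumption:sharp}. Both proofs also share the same closing observation that $\bar y\in\cX^\ast$ together with $g(\bar y)\leq g(y^\ast)$ upgrades membership in $\cX^\ast$ to membership in $\proj_{\cX^\ast}(y)$.
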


Lemma~\ref{lem:prox_subproblem} shows that, unlike Algorithm~\ref{alg:stoc_prox}, we can expect the output of Algorithm~\ref{alg:stoc_prox_sc} to be near the minimizer $\bar y_0 \in \cX^\ast$ of the proximal subproblem $f(y) + \frac{\rho}{2} \|y - y_0\|^2$, at least with constant probability. This lemma underlies the validity of Lemma~\ref{lem:key_lem_sc}. We present its proof in Section~\ref{sec:proof_key_lem_sc}.

\begin{lem}\label{lem:key_lem_sc}
	Fix real numbers $\initquallem \in (0, 1)$, $\gamma \in (0, 2)$, $\alpha>0$ $K \in \NN$, and $\rho$ satisfying $\eta < \rho < \left(\frac{2-\gamma\sqrt{\initquallem} }{2\gamma\sqrt{\initquallem}}\right)\quadapprox$.  Choose any point $y_0 \in \cT_{\gamma\sqrt{\initquallem}}$
	and set 
	$
	\bar y_0 := \argmin_{x \in \cX}  \left\{ f(x) + \frac{\rho}{2} \| x - y_0\|^2\right\}.
	$
	Define 
	$$
	y_{\INNERIDX^\ast} = \SMBMSC(y_0, \rho, \alpha,  \INNERIDX).
	$$
	%In addition, define 
	%$
	%R:=12\lipsymb^2 + \strongconvex^2  \initqual \left(\frac{\gamma\mu}{\quadapprox }\right)^2.
	%$
	Then for all $\varepsilon > 0$, with probability at least
\begin{align*}
P(\varepsilon) &:= 1 - \initquallem -  \left(\frac{\quadapprox}{\gamma\mu}\right)^2\INNERIDX\lipsymb^2 \alpha^2 - \frac{1}{\varepsilon^2} \cdot \frac{(K+1) \lipsymb^2 \alpha + (\alpha^{-1} + \quadapprox) \cdot\initquallem \left(\frac{\gamma \mu}{\quadapprox}\right)^2}{(\rho - \quadapprox)(\INNERIDX + 1)}
\end{align*}
we have 
	%\begin{align*}
	$\|y_{\INNERIDX^\ast} - \bar y_0\| \leq \varepsilon$.
	%\end{align*}
%	If the functions $f_{x}(\cdot,\data)+r$ are $\mu$-strongly convex for all $x\in U$ and a.e. $\data\in \Omega$, then inequality \eqref{eqn:recurs_2} is true with $\eta:=-\mu$.
\end{lem}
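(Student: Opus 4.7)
The strategy is to view Algorithm~\ref{alg:stoc_prox_sc} as a stochastic model-based method applied to the regularized loss $\phi(y):=f(y)+\tfrac{\rho}{2}\|y-y_0\|^2$. By the weak convexity of $f$ and the hypothesis $\rho>\quadapprox$, the function $\phi$ is $(\rho-\quadapprox)$-strongly convex on $U$ with unique minimizer $\bar y_0$ over $\cX$; moreover, the upper bound on $\rho$ triggers Lemma~\ref{lem:prox_subproblem}, giving $\bar y_0\in\cX^{\ast}$ and deterministically $\|y_0-\bar y_0\|^2 \le \initquallem\,(\gamma\mu/\quadapprox)^2$. The plan is to show that $\EE[\|y_{\INNERIDX^\ast}-\bar y_0\|^2\mathbf{1}_A]$ is small on a high-probability event $A$, and then apply Markov's inequality. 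The three error terms in $1-P(\varepsilon)$ will correspond, in order, to the slack between the initialization tube $\cT_{\gamma\sqrt{\initquallem}}$ and the working tube $\cT_\gamma$, the probability that the iterates leave $\cT_\gamma$, and the Markov tail conditional on staying inside.

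First, I localize the analysis via the stopping time $\tau:=\inf\{k\ge 0 : y_k\notin\cT_\gamma\}$, which is needed because~\ref{assumption:lipschitz} only holds inside $\cT_2$. Each one-step displacement is controlled by the optimality condition of the prox-subproblem as $\|y_{k+1}-y_k\|\lesssim\alpha\|\tilde v_k\|$, with $\tilde v_k\in\partial f_{y_k}(y_k,z_k)+\rho(y_k-y_0)$, whose second moment is bounded by $\lipsymb^2$ plus a $\rho$-dependent constant on $\{y_k\in\cT_2\}$. A Doob-style maximal inequality applied to the partial sums $\sum_{j\le k}\|y_{j+1}-y_j\|$ before $\tau$ then yields $\PP(\tau\le\INNERIDX)\le\initquallem+\bigl(\tfrac{\quadapprox}{\gamma\mu}\bigr)^{2}\INNERIDX\lipsymb^2\alpha^2$, where the additive $\initquallem$ absorbs the initial gap between $\cT_{\gamma\sqrt{\initquallem}}$ and $\cT_\gamma$.

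Second, I derive a one-step recursion. Strong convexity of the prox-subproblem (with modulus $\alpha^{-1}+\rho$), evaluated at $y=\bar y_0$, gives
\[
\tfrac{\alpha^{-1}+\rho}{2}\|y_{k+1}-\bar y_0\|^2 + \tfrac{\rho}{2}\|y_{k+1}-y_0\|^2 \le f_{y_k}(\bar y_0,z_k)-f_{y_k}(y_{k+1},z_k) + \tfrac{1}{2\alpha}\|y_k-\bar y_0\|^2 + \tfrac{\rho}{2}\|y_0-\bar y_0\|^2 - \tfrac{1}{2\alpha}\|y_{k+1}-y_k\|^2.
\]
Bounding the model difference by convexity plus Young's inequality (which cancels the $\tfrac{1}{2\alpha}\|y_{k+1}-y_k\|^2$ term), then taking conditional expectation and using~\ref{assumption:oneside} to replace $\EE[f_{y_k}(\bar y_0,z_k)-f_{y_k}(y_k,z_k)]$ by $f(\bar y_0)-f(y_k)+\tfrac{\quadapprox}{2}\|\bar y_0-y_k\|^2$, followed by the $(\rho-\quadapprox)$-strong convexity of $\phi$ to upper bound $f(\bar y_0)-f(y_k)$ by $\tfrac{\rho}{2}\|y_k-y_0\|^2-\tfrac{\rho}{2}\|y_0-\bar y_0\|^2-\tfrac{\rho-\quadapprox}{2}\|y_k-\bar y_0\|^2$, produces a clean form where summing over $k$ causes the $\rho\|y_k-y_0\|^2$ contributions to telescope (using $\|y_0-y_0\|=0$) and leaves a $2(\rho-\quadapprox)\sum_k\EE\|y_k-\bar y_0\|^2$ on the left. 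Rearranging yields the cumulative bound
\[
2(\rho-\quadapprox)\sum_{k=0}^{\INNERIDX}\EE\bigl[\|y_k-\bar y_0\|^2\mathbf{1}_{\tau>\INNERIDX}\bigr] \le (\alpha^{-1}+\rho)\|y_0-\bar y_0\|^2 + (\INNERIDX+1)\alpha\lipsymb^2,
\]
which matches the numerator of the last error term in $P(\varepsilon)$ up to absolute constants.

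Finally, since $\INNERIDX^\ast$ is uniform on $\{0,\dots,\INNERIDX\}$, dividing by $\INNERIDX+1$ gives a bound on $\EE[\|y_{\INNERIDX^\ast}-\bar y_0\|^2\mathbf{1}_{\tau>\INNERIDX}]$; Markov's inequality contributes the $\varepsilon^{-2}$ factor, substituting $\|y_0-\bar y_0\|^2\le\initquallem(\gamma\mu/\quadapprox)^2$ recovers the last term of $1-P(\varepsilon)$, and a union bound with the stopping-time estimate closes the argument. I expect the main obstacle to be the telescoping in the second stage: because the regularization $\tfrac{\rho}{2}\|\cdot-y_0\|^2$ is centered at $y_0$ rather than at $\bar y_0$, the cross terms $\|y_k-y_0\|^2$ and $\|y_0-\bar y_0\|^2$ must cancel exactly against the $\rho\|y_{k+1}-y_0\|^2$ produced by strong convexity on the left and the sharpness-driven gap generated by $\phi$; otherwise a growing $\rho(\INNERIDX+1)\|y_0-\bar y_0\|^2$ bias would spoil the $(\rho-\quadapprox)^{-1}(\INNERIDX+1)^{-1}$ rate.
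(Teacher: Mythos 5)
Your overall architecture matches the paper's: stop the process when it exits $\cT_\gamma$, run a Lyapunov-style one-step recursion for the regularized strongly convex objective $g(y)=f(y)+\tfrac{\rho}{2}\|y-y_0\|^2$, average over the uniformly chosen index, and finish with Markov's inequality plus a union bound. Your one-step recursion and its telescoping (which the paper packages into the Lyapunov function $E_k=\tfrac{\alpha^{-1}+\quadapprox}{2}\|y_k-\bar y_0\|^2+\tfrac{\rho}{2}\|y_k-y_0\|^2$) are essentially the paper's Proposition bounding $\EE[D_{K^\ast}^2 1_{A_K}]$, and the centering issue you flag is resolved exactly as you suggest: the $\tfrac{\rho}{2}\|y_k-y_0\|^2$ terms are carried along inside $E_k$ rather than telescoped away, so no growing bias appears.

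The genuine gap is in your bound on the exit probability $\PP(\tau\le K)$. You propose a Doob-type maximal inequality on the cumulative path length $\sum_{j\le k}\|y_{j+1}-y_j\|$. This cannot deliver the claimed term $\left(\tfrac{\quadapprox}{\gamma\mu}\right)^2 K\lipsymb^2\alpha^2$: the one-step displacements have expected norm of order $\alpha\lipsymb$ and do not cancel (they are not martingale increments), so $\EE\sum_j\|y_{j+1}-y_j\|$ is of order $K\alpha\lipsymb$, and a first-moment Markov bound on the path length scales like $K\alpha\lipsymb\cdot\quadapprox/(\gamma\mu)$; with the intended step size $\alpha\asymp 1/(\lipsymb\sqrt{K})$ this diverges like $\sqrt{K}$, while a second-moment version via Cauchy--Schwarz gives $K^2\alpha^2\lipsymb^2$, off by a factor of $K$. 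The correct argument (the paper's Proposition bounding $\PP(A_K)$) applies Markov's inequality at level $(\gamma\mu/\quadapprox)^2$ to the \emph{stopped squared-distance process} $Y_k=rE_{k\wedge\tau}$ with $r=2(\alpha^{-1}+\quadapprox)^{-1}$: because the one-step recursion has a nonpositive drift, $Y_k$ is a supermartingale up to an additive $O(\alpha^2\lipsymb^2)$ per step, whence $\EE[Y_K]\le \initquallem(\gamma\mu/\quadapprox)^2+K\alpha^2\lipsymb^2$, which is exactly what the claimed exit bound requires. You need to replace the path-length argument with this stopped-supermartingale argument. A minor additional point: the initial term in your cumulative bound should be $(\alpha^{-1}+\quadapprox)\|y_0-\bar y_0\|^2$, as in the lemma statement, not $(\alpha^{-1}+\rho)$; since $\rho$ may exceed $\quadapprox$ by a large factor within the permitted range, this is not merely an absolute constant.
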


The next lemma shows that we can boost the success probability of the inner loop arbitrarily high at only a logarithmic cost. This is an immediate application of Lemma~\ref{lem:key_lem_sc} and the ensemble technique described in Section~\ref{sec:algorithmandresults}, which is formally stated in Lemma~\ref{lem:ensemble}.

\begin{cor}\label{cor:ensemble}
Assume the setting of Lemma~\ref{lem:key_lem_sc} and suppose $P(\varepsilon/3) \geq 2/3$. Then for any $\delta' > 0$, in the regime $\TRIALCOUNT > 48 \log(1/\delta')$, the point
$$\hat y = \SMBMSCE(y_0, \rho, \alpha, \INNERIDX, \TRIALCOUNT, \varepsilon/3)$$
satisfies
$$\PP(\|\hat y - \bar y_0\| \leq \varepsilon)\geq 1-\delta'.$$
\end{cor}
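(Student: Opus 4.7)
The plan is to combine Lemma~\ref{lem:key_lem_sc} with a Chernoff bound and two applications of the triangle inequality. First, I would apply Lemma~\ref{lem:key_lem_sc} with target accuracy $\varepsilon/3$ to each of the $m$ independent calls to $\SMBMSC$ in the first loop of Algorithm~\ref{alg:stoc_prox_high_prob}. Letting $y_1,\ldots,y_m$ denote the outputs and $Z_j := \mathbf{1}\{\|y_j - \bar y_0\| \leq \varepsilon/3\}$, the hypothesis $P(\varepsilon/3) \geq 2/3$ yields i.i.d.\ Bernoulli variables with $\EE[Z_j] \geq 2/3$.

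Next, I would control the number of successful trials $S := \sum_{j=1}^m Z_j$ via the multiplicative Chernoff bound
\[
\PP\bigl(S \leq (1-\theta)\EE[S]\bigr) \leq \exp\bigl(-\theta^2\EE[S]/2\bigr),
\]
applied with $(1-\theta)\EE[S] = m/2$. Since $\EE[S] \geq 2m/3$, one verifies $\theta \geq 1/4$, so that $\PP(S \leq m/2) \leq \exp(-m/48)$. In the regime $m > 48 \log(1/\delta')$ this probability is strictly less than $\delta'$, so with probability at least $1-\delta'$ strictly more than half of the indices $j$ satisfy $\|y_j - \bar y_0\| \leq \varepsilon/3$. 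Call this event $E$; I work on $E$ for the rest of the argument.

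Finally, on $E$ I would argue that the point returned by Algorithm~\ref{alg:stoc_prox_high_prob} lies within $\varepsilon$ of $\bar y_0$, via two triangle inequality steps. For existence: if $j$ is any successful index, then every other successful index $i$ satisfies $\|y_j - y_i\| \leq 2\varepsilon/3$ (both lie within $\varepsilon/3$ of $\bar y_0$), so the test $|B_{2\varepsilon/3}(y_j) \cap \{y_i\}_{i=1}^m| > m/2$ passes and the algorithm does not vacuously fall through. For correctness, let $j$ be \emph{any} index selected by the algorithm: the set of indices within $2\varepsilon/3$ of $y_j$ has size strictly greater than $m/2$, and the set of successful indices also has size strictly greater than $m/2$, so these two subsets of $\{1,\ldots,m\}$ must intersect. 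Choosing any $i$ in the intersection yields $\|y_j - \bar y_0\| \leq \|y_j - y_i\| + \|y_i - \bar y_0\| \leq 2\varepsilon/3 + \varepsilon/3 = \varepsilon$, as desired.

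The main obstacle I anticipate is the pigeonhole step in the last paragraph: Algorithm~\ref{alg:stoc_prox_high_prob} may return \emph{any} index passing the threshold test, not necessarily a successful one, so one must show both that the test is satisfiable and that satisfying it is enough to certify $\varepsilon$-proximity to $\bar y_0$. Everything else---verifying independence of the trials, invoking the Chernoff constants, and bookkeeping $\varepsilon/3$ versus $2\varepsilon/3$---is routine.
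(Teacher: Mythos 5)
Your proposal is correct and follows essentially the same route as the paper: the paper simply packages your Chernoff-plus-pigeonhole-plus-triangle-inequality argument into a standalone statement (Lemma~\ref{lem:ensemble}, applied with $p=2/3$ and accuracy $\varepsilon/3$, whose failure probability $\exp(-\tfrac{1}{2p}m(p-\tfrac12)^2)=\exp(-m/48)$ matches the constant you derive from the multiplicative Chernoff bound) and then cites it. Your handling of the two-sided issue---that the returned index need only pass the threshold test, so one must separately establish satisfiability and then certify proximity via intersecting the two majority sets---is exactly the content of the paper's lemma.
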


Finally, we are ready to establish convergence guarantees of Algorithm~\ref{alg:stoc_prox_outer_sc}.

\begin{thm}[Linear convergence with high probability]\label{thm:high_prob}
Fix constants $\gamma \in (0, 2)$, $\varepsilon>0$, and $\delta' \in (0, 1)$. Let $R_0$ denote the initial distance estimate satisfying  $\dist(x_0, \cX^\ast) \leq R_0\leq \frac{\gamma \mu}{4\quadapprox}$. 
Furthermore, define algorithm parameters 
$$
\rho_0 = \frac{\mu}{2R_0}, \qquad\qquad \epsilon_0 = \frac{ R_0}{3}, \qquad\qquad \alpha_0 = \sqrt{\frac{R_0^2}{\lipsymb^2(\INNERIDX+1)}},  
$$
and 
$$
M = \left\lceil48 \log(\OUTERIDX/\delta')\right\rceil,\qquad\qquad T  = \left\lceil \log_2 \left(\frac{R_0}{\varepsilon}\right) \right\rceil, \qquad  \qquad K= \left\lfloor   \left( \frac{864 \lipsymb}{ \mu}\right)^2\right\rfloor.
$$
Then with probability at least $
1 -  \delta'$, the point 
$$x_{\OUTERIDX} = \RSMBMSCE(x_0, \rho_0, \alpha_0, \INNERIDX,\epsilon_0 , \TRIALCOUNT, \OUTERIDX)$$
satisfies 
$
\dist(x_\OUTERIDX, \cX^*) \leq \varepsilon.
$
Moreover, the total number of samples $\data_\inneridx \sim \PP$ generated by the algorithm is bounded by 
$$
 K T   M \leq \left(\frac{864 \lipsymb}{\mu}\right)^2\cdot\left\lceil\log_2\left(\frac{R_0}{\varepsilon}\right)\right\rceil\cdot\left\lceil48\log\left(\frac{\left\lceil\log_2\left(\frac{R_0}{\varepsilon}\right)\right\rceil}{\delta'}\right)\right\rceil.
$$ 
\end{thm}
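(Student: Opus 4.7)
The plan is to run an induction on the outer counter $t$, with inductive hypothesis $E_t := \{\dist(x_t,\cX^\ast)\le 2^{-t}R_0\}$, and to show $\PP(E_T)\ge 1-\delta'$ by propagating the event $E_t\Rightarrow E_{t+1}$ at each stage using Corollary~\ref{cor:ensemble}, followed by a union bound over $t$. The base case is trivial by the assumption $\dist(x_0,\cX^\ast)\le R_0$. The geometric rescalings $\rho_t=2^t\rho_0$, $\alpha_t=2^{-t}\alpha_0$, $\epsilon_t=2^{-t}\epsilon_0$ are chosen precisely so that the inner-loop guarantee, when conditioned on $E_t$, upgrades from accuracy $\epsilon_t$ to accuracy $\epsilon_{t+1}$, which corresponds to halving the distance to $\cX^\ast$.

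\textbf{Inductive step.} Condition on $E_t$, so $x_t\in\cT_{\gamma_t}$ with $\gamma_t:=2^{-t}R_0\quadapprox/\mu$. Two things must be checked. First, that Lemma~\ref{lem:prox_subproblem} applies to the subproblem at stage $t$, i.e. $\rho_t<\frac{2-\gamma_t}{2\gamma_t}\quadapprox$: plugging in $\rho_t=2^t\mu/(2R_0)$ gives $\rho_t\gamma_t=\quadapprox/2$, so the inequality reduces to $\rho_t>\quadapprox/2$, which holds uniformly in $t$ since the initialization bound $R_0\le\gamma\mu/(4\quadapprox)$ with $\gamma\in(0,2)$ already forces $\rho_0>\quadapprox$. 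Consequently the regularized minimizer $\bar x_t\in\cX^\ast$, so any bound of the form $\|x_{t+1}-\bar x_t\|\le\epsilon_t$ translates directly to $\dist(x_{t+1},\cX^\ast)\le\epsilon_t=2^{-t}R_0/3<2^{-(t+1)}R_0$, establishing $E_{t+1}$. Second, one must invoke Corollary~\ref{cor:ensemble} to get this bound with high probability; this requires verifying the precondition $P(\epsilon_t/3)\ge 2/3$, where $P$ is the probability expression from Lemma~\ref{lem:key_lem_sc}. The natural choice is $\gamma\sqrt{\initquallem_t}=\gamma_t$, i.e.\ $\initquallem_t=\gamma_t^2/\gamma^2=(2^{-t}R_0\quadapprox/(\gamma\mu))^2$, which is small thanks to the initialization bound.

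\textbf{Parameter verification.} Substituting $\alpha_t=2^{-t}R_0/(\lipsymb\sqrt{K+1})$, $\rho_t-\quadapprox\gtrsim\rho_t=2^t\mu/(2R_0)$, $\initquallem_t$ as above, and $\varepsilon=\epsilon_t/3=2^{-t}R_0/9$ into the expression for $P$, all three error terms reduce to harmless dimensionless quantities: the $\initquallem_t$ term is bounded by $(\gamma/4)^2$, the $K\lipsymb^2\alpha_t^2(\quadapprox/(\gamma\mu))^2$ term is bounded by the same, and the remaining fraction simplifies to a constant multiple of $\lipsymb/(\mu\sqrt{K+1})$ plus a lower-order piece, both controlled by the choice $K=\lfloor(864\lipsymb/\mu)^2\rfloor$. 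The constants $4$ and $864$ (and the restriction on $\gamma$ implicit in $R_0\le\gamma\mu/(4\quadapprox)$) are exactly what is needed to guarantee $P(\epsilon_t/3)\ge 2/3$. Corollary~\ref{cor:ensemble} with $M=\lceil 48\log(T/\delta')\rceil$ then gives $\PP(E_{t+1}\mid E_t)\ge 1-\delta'/T$. A union bound over $t=0,\ldots,T-1$ yields $\PP(E_T)\ge 1-\delta'$, and since $2^{-T}R_0\le\varepsilon$, the distance bound follows. The sample-count tally is immediate: $T$ outer stages, each running $M$ copies of Algorithm~\ref{alg:stoc_prox_sc} for $K$ inner iterations, giving $TMK$ samples as claimed.

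\textbf{Main obstacle.} The delicate part is the simultaneous bookkeeping in the inductive step: the radius $\gamma_t$ of the tube in which $x_t$ lives shrinks, the proximal weight $\rho_t$ grows, the stepsize $\alpha_t$ shrinks, and the required accuracy $\epsilon_t$ shrinks, all geometrically at the same rate, and one must check that the preconditions of both Lemma~\ref{lem:prox_subproblem} and Corollary~\ref{cor:ensemble} survive uniformly in $t$ with the same $K$ and the same $M$. Everything else—the base case, the union bound, and the sample count—is routine.
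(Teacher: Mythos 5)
Your proposal follows essentially the same route as the paper's proof: induction on the events $E_t=\{\dist(x_t,\cX^\ast)\le 2^{-t}R_0\}$, Lemma~\ref{lem:prox_subproblem} to place the regularized minimizer in $\cX^\ast$, Lemma~\ref{lem:key_lem_sc} combined with Corollary~\ref{cor:ensemble} for the per-stage high-probability guarantee, verification of the admissible range for $\rho_t$, and a union bound over the $T$ stages. The only discrepancy is constant-factor bookkeeping in the tolerance fed to the ensemble step (Algorithm~\ref{alg:stoc_prox_outer_sc} calls $\SMBMSCE$ with tolerance $\epsilon_t$, so Corollary~\ref{cor:ensemble} returns accuracy $3\epsilon_t$ rather than $\epsilon_t$), a looseness that the paper's own argument shares.
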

\begin{proof}
 For all $\outeridx$, let $E_\outeridx$ be the event  $\{\dist(x_\outeridx, \cX^\ast) \leq 2^{-t}\cdot R_0\}$. % Below, we will use the fact that in the event $E_t$, we have $x_t \in \cT_{\sqrt{\initqual q^{2\outeridx}} \gamma}$.  Let us now prove the result by induction. 
Our goal is to show for all $\outeridx \in \{0, \ldots, \OUTERIDX\}$ the estimate $\PP(E_\outeridx) \geq 1 - \outeridx\delta'/\OUTERIDX$ holds. 

We proceed by induction.
The base case follows since $\PP(E_0) = 1$ by definition of $R_0$. Now suppose that the claimed estimate $\PP(E_\outeridx) \geq 1 - \outeridx\delta'/\OUTERIDX$ is true for index $\outeridx$. We will show it remains true with $t$ replaced by $t+1$. We will apply Lemma~\ref{lem:key_lem_sc} conditionally with $y_0 = x_t$ and error tolerance $\epsilon_t$ in the event $E_\outeridx$. To this end, we  define $\initqual := R_0^2\left(\tfrac{\eta}{\gamma\mu}\right)^2$ and set $\rho = \rho_\outeridx$, $\initquallem:=\initqual2^{-2t}$, $\alpha = \alpha_\outeridx $. Before we apply the Lemma, we verify that $\rho$ meets the conditions of Lemma~\ref{lem:key_lem_sc}, namely that $\quadapprox < \rho <  \left(\frac{2- \gamma\sqrt{\initqual}2^{-t}}{2\gamma\sqrt{\initqual} 2^{-t}}\right) \quadapprox$. Indeed, given that $\rho = \frac{2^t}{2\gamma\sqrt{\initqual}}\cdot \quadapprox$ (by definition of $\initqual$), the bounds follow immediately from the restrictions $\delta_1<1/16$ and $\gamma \leq 2$. In particular, it is straightforward to verify the bound
%
%
%To that end, we have
%$$
%\rho =  \frac{2^\outeridx}{2\gamma\sqrt{\initqual}}\cdot \quadapprox  \geq  \frac{2^\outeridx}{4\gamma\sqrt{\initqual}}\cdot \quadapprox + \quadapprox %\frac{(1-\gamma \sqrt{\initqual q^{2t}})}{2\gamma\sqrt{\initqual q^{2t}}}\quadapprox =  \left(\frac{(1-4\gamma \sqrt{\initqual q^{2t}})}{2\gamma\sqrt{\initqual q^{2t}}} + 1\right)\quadapprox \geq \left(\frac{1}{4\gamma\sqrt{\initqual q^{2t}}} + 1\right)\quadapprox,
%$$
%since $\initqual < 1/16$. In particular, the following lower bound holds: 
\begin{align}\label{eq:rho_bounds}
\rho - \quadapprox \geq \frac{\quadapprox 2^{\outeridx-2}}{\gamma\sqrt{\initqual}} = \frac{2^{\outeridx}\mu}{4R_0}.
\end{align}
%Similarly, the bound $\initqual \leq 1/4$ implies that $1-\gamma\sqrt{\initqual }2^{-t} > 1/2$, so that we have 
%
Now Lemma~\ref{lem:key_lem_sc} yields that the random vector $ y_{\INNERIDX^\ast} = \SMBMSC(x_t, \rho_t, \alpha_\outeridx, \INNERIDX_t)$ satisfies 
\begin{align*}
 &\PP\left(\|y_{\INNERIDX^\ast} - \bar y_0\| \leq \epsilon_t \mid E_t, y_0 = x_t\right)\\
 &\geq 1 - \initqual 2^{-2\outeridx} -  \left(\frac{\quadapprox}{\gamma\mu}\right)^2\INNERIDX\lipsymb^2 \alpha^2 - \frac{1}{\epsilon_t^2} \cdot \frac{(K+1) \lipsymb^2 \alpha + (\alpha^{-1} + \quadapprox) \cdot\initquallem \left(\frac{\gamma \mu}{\quadapprox}\right)^2}{(\rho - \quadapprox)(\INNERIDX + 1)}\\
  &\geq 1 - 2\initqual 2^{-2\outeridx}  - \frac{9}{2^{-2(t+1)} R_0^2} \cdot \frac{\lipsymb 2^{-t} R_0}{(\rho - \quadapprox)\sqrt{\INNERIDX + 1}} - \frac{36\quadapprox }{(\rho - \quadapprox) (K+1)}\\
 &\geq 1 - 2\initqual 2^{-2\outeridx}  -  \frac{144(\lipsymb/\mu) }{\sqrt{\INNERIDX + 1}} - \frac{144\gamma\sqrt{\initqual 2^{-2t}} }{K+1} \geq 2/3,
 %&\geq 1 - \initqual q^{2\outeridx} -  4\left(\frac{\quadapprox}{\gamma\mu}\right)^2\sum_{\inneridx=0}^{\INNERIDX-1} \frac{6\lipsymb^2 + \strongconvex_\outeridx^2   q^{2\outeridx} R_0^2}{((\inneridx+2)\strongconvex_\outeridx + 2\sigma_\outeridx)^2} \\
% &\hspace{20pt}- \frac{9}{q^{2(\outeridx + 1)} R_0^2} \cdot \left(\frac{32 q^{2\outeridx}R_0^2(\strongconvex_\outeridx + \sigma_\outeridx)^2 }{ \strongconvex_\outeridx((\INNERIDX+1)^2 + 4\sigma_\outeridx(\INNERIDX + 1)) } + 8\cdot\frac{6\lipsymb^2 + \strongconvex_\outeridx^2   q^{2\outeridx} R_0^2}{\strongconvex_\outeridx ((K+1) + 4\sigma_\outeridx )}\right),
\end{align*}
where the second inequality follows from~\eqref{eq:rho_bounds}, while the third inequality uses the definition of $K$ and the bound $\delta_1 < 1/16$ and $\lipsymb \geq \mu$.  Therefore, since $M \geq 48\log(\OUTERIDX/\delta')$, we may apply  Corollary~\ref{cor:ensemble} (conditionally) to deduce 
$$
\PP\left(\|x_{\outeridx + 1} - \bar y_0\| \leq 3\epsilon_t \mid E_t, y_0 = x_\outeridx \right) \geq 1 - \delta'/\OUTERIDX.
$$
Consequently, 
\begin{align*}
\PP\left(\dist(x_{\outeridx + 1}, \cX^\ast) \leq 2^{-(\outeridx + 1)}R_0 \right) 
&\geq \PP\left(\dist(x_{\outeridx + 1}, \cX^\ast) \leq  2^{-(\outeridx + 1)}R_0\mid E_\outeridx \right) \PP(E_\outeridx)  \\
&\geq \EE_{y_0}\left[ \PP\left(\|x_{\outeridx + 1} - \bar y_0\| \leq  2^{-(\outeridx + 1)}R_0\mid E_\outeridx, y_0 = x_t \right) \right] \PP(E_\outeridx) \\
&\geq (1-\delta'/\OUTERIDX)(1-\outeridx\delta'/\OUTERIDX) \\
&\geq 1 - (\outeridx + 1)\delta'/\OUTERIDX,
\end{align*}
as desired. This completes the proof. 
\end{proof}

%%%%%%%%%

%\input{damek_stuff}

%%%%%%%%% 
%% Examples

\section{Consequences for statistical recovery problems}\label{sec:examples}

Recent work has shown that a variety of statistical recovery problems are both
sharp and weakly convex. Prominent examples include robust matrix sensing
\cite{li2018nonconvex}, phase retrieval \cite{duchi_ruan_PR}, blind
deconvolution \cite{charisopoulos2019composite}, quadratic and bilinear sensing
and matrix completion \cite{charisopoulos2019low}. In this section, we briefly
comment on how our current work leads to linearly convergent
streaming algorithms for robust phase retrieval and blind deconvolution problems.

\subsection{Robust Phase retrieval}
Phase retrieval is a common task in computational science, with numerous applications
including imaging, X-ray crystallography, and speech processing. In this section, we consider the real counterpart of this problem. For details and a historical account of the phase retrieval problem, see for example \cite{duchi_ruan_PR,goldstein2018phasemax,7078985,MR3069958}.
Throughout this section, we fix a signal $\bar x \in \RR^d$ and consider the following
measurement model.

\begin{assumption}[Robust Phase Retrieval]\label{assump:phase_assump} Consider random $a \in \RR^d$, $\xi \in \RR$, and $\bern \in \{0, 1\}$ and the measurement model
$$
b = (a^T \bar x)^2 + \bern \cdot \xi.
$$
We make the following assumptions on the random data.
\begin{enumerate}
\item The variable $\bern$ is independent of $\xi$ and $a$. The failure probability $\pfail$ satisfies $$\pfail := P(\bern \neq 0) < 1/2.$$
\item The first absolute moment of $\xi$ is finite, $\EE\left[ |\xi| \right] < \infty$.
\item There exist constants $\weakphase, \tilde \mu, \tilde \lipsymb > 0$ such that for all $v, w \in \sphere^{d-1}$, we have
%\begin{enumerate}
%\item $\EE\left[\sqrt{\dotp{a, v}^2\|a\|^2}\right] = \tilde \lipsymb  <
%\infty$ such that
%\item For all unit norm $v \in \RR^d$, we have
%%\begin{align*}
% $\EE\left[ \dotp{a, v}^2\right] \leq \weakphase.$
%% \end{align*}
%\item For all unit norm $v, w \in \RR^d$, we have
$$
\tilde \mu \leq \EE \left[|\dotp{a, v}\dotp{a, w} |\right],  \qquad \sqrt{\EE\left[\dotp{a, v}^2\|a\|^2\right]} \leq \tilde \lipsymb, \qquad \EE\left[ \dotp{a, v}^2\right] \leq \weakphase.
$$
%\end{enumerate}
\end{enumerate}
\end{assumption}

Based on the above assumptions, the following theorem develops three models for the robust phase retrieval problem. We defer the proof to Section~\ref{appendix:thm:phase_params}.
\begin{thm}[Phase retrieval parameters]\label{thm:phase_params}
Consider the population data $z = (a, \bern, \xi)$ and form the optimization problem 
$$
\min_{x}~f(x) = \EE_{z}\left[ f(x, z)\right]\quad  \text{ where } \quad f(x, z) :=  | (a^Tx)^2 - b|.
$$
Then the sharpness property~\ref{assumption:sharp} holds with $\cS=\cX^*=\{\pm \bar x\}$ and $\mu=(1-2\pfail)\tilde \mu \|\bar x\|$.  Moreover, given a measurable selection $G(x, z) \in \partial f(x, z)$, the models
\begin{itemize}
\item[] {\bf (subgradient)} $f^s_x(y, z) = f(x, z) + \dotp{G(x, z), y - x}$
\item[] {\bf (clipped subgradient)} $f^{cl}_x(y, z) = \max \{ f(x, z) + \dotp{G(x, z), y - x}, 0\}$
\item[] {\bf (prox-linear)} $f^{pl}_x(y, z) = | (a^Tx)^2 - b +  2(a^T x) a^T(y - x)| $
\end{itemize}
 satisfy Assumptions~\ref{assumption:sample}-\ref{assumption:lipschitz} with $\quadapprox = 2\weakphase$, $L(x,
 z) = 2|\dotp{a, x}|\|a\|$ and $\lipsymb \leq 2\tilde \lipsymb \|\bar x\|
 \left(1 + \frac{(1-2\pfail)\tilde \mu}{\weakphase}\right)$. %On the other
 %hand, the proximal point model $f_x(y, z) = f(y, z) + \frac{\|a\|^2}{2} \|y -
 %x\|^2$ satisfies Assumption~\ref{assump:main} with $\rho
\end{thm}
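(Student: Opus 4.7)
The plan is to address sharpness first and then verify Assumptions~\ref{assumption:oneside}--\ref{assumption:lipschitz} separately for each of the three models. All of these computations hinge on the algebraic identity $(a^Tx)^2 - (a^T\bar x)^2 = (a^T(x-\bar x))\cdot(a^T(x+\bar x))$ together with the fact that, as a function of $y$, the map $g(y,z)=(a^Ty)^2-b$ is quadratic with Hessian $2aa^T$, so the exact Taylor expansion $g(y,z)=g(x,z)+\langle 2(a^Tx)a,y-x\rangle+(a^T(y-x))^2$ holds.

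For sharpness, I would begin by observing $f(\pm\bar x,z)=u|\xi|$, so both $\pm\bar x$ yield the same expected loss $\pfail\EE|\xi|$. Setting $A=(a^Tx)^2-(a^T\bar x)^2$ and conditioning on $u$, I would write
\[
\EE[f(x,z)-f(\bar x,z)]=(1-\pfail)\EE|A|+\pfail\,\EE[|A-\xi|-|\xi|].
\]
Bounding the second term below by $-\EE|A|$ via the reverse triangle inequality $|A-\xi|\ge|\xi|-|A|$ yields $f(x)-f(\bar x)\ge(1-2\pfail)\EE|A|$. Using the factorization of $A$ and the lower bound $\tilde\mu\le\EE|\langle a,v\rangle\langle a,w\rangle|$ for unit $v,w$ gives $\EE|A|\ge\tilde\mu\|x-\bar x\|\|x+\bar x\|$. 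Finally, since $\|x-\bar x\|+\|x+\bar x\|\ge2\|\bar x\|$ forces $\max\{\|x\pm\bar x\|\}\ge\|\bar x\|$, one gets $\|x-\bar x\|\|x+\bar x\|\ge\|\bar x\|\dist(x,\{\pm\bar x\})$, which establishes~\ref{assumption:sharp} with $\mu=(1-2\pfail)\tilde\mu\|\bar x\|$ and forces $\cX^\ast=\{\pm\bar x\}$.

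The convexity assumption~\ref{assumption:convex} is immediate for all three models: the subgradient model is affine in $y$, the clipped model is the max of an affine function and $0$, and the prox-linear model is the absolute value of an affine function of $y$. For the one-sided accuracy~\ref{assumption:oneside}, the touching property $\EE[f_x(x,z)]=f(x)$ is straightforward for each model; for the quadratic upper bound I would proceed as follows. For the subgradient model, pick $G(x,z)=\mathrm{sign}(g(x,z))\cdot 2(a^Tx)a$ and use $|g(y,z)|\ge\mathrm{sign}(g(x,z))g(y,z)$ together with the exact second-order expansion of $g$ to obtain $f(y,z)\ge f^s_x(y,z)-(a^T(y-x))^2$ pointwise; taking expectations and using $\EE(a^T(y-x))^2\le\weakphase\|y-x\|^2$ gives the bound with $\quadapprox=2\weakphase$. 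For the prox-linear model, the identity $g(x,z)+\langle\nabla g(x,z),y-x\rangle=g(y,z)-(a^T(y-x))^2$ combined with the triangle inequality yields $f^{pl}_x(y,z)\le f(y,z)+(a^T(y-x))^2$ pointwise. For the clipped model, the above subgradient bound gives $f^s_x(y,z)\le f(y,z)+(a^T(y-x))^2$, and since $f(y,z)\ge0$, taking the positive part preserves this inequality, so the same expectation bound carries through.

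For~\ref{assumption:lipschitz}, in all three cases one can verify that $G(x,z)=\mathrm{sign}(g(x,z))\cdot 2(a^Tx)a$ lies in $\partial_y f_x(x,z)$ at $y=x$, with the clipped and prox-linear cases also admitting $0$ in the subdifferential when $g(x,z)=0$; thus the minimum norm subgradient is bounded by $L(x,z)=2|a^Tx|\|a\|$. Writing $\EE[L(x,z)^2]=4\|x\|^2\EE[\langle a,x/\|x\|\rangle^2\|a\|^2]\le4\|x\|^2\tilde\lipsymb^2$ and noting that $x\in\cT_2$ means $\dist(x,\{\pm\bar x\})\le 2\mu/\quadapprox=(1-2\pfail)\tilde\mu\|\bar x\|/\weakphase$, so $\|x\|\le\|\bar x\|\bigl(1+(1-2\pfail)\tilde\mu/\weakphase\bigr)$, yields the claimed bound on $\lipsymb$. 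The most delicate step in the plan is the sharpness argument — specifically, handling the heavy-tailed noise $\xi$ so as to recover a clean $(1-2\pfail)$ factor without requiring bounded moments beyond the first; the reverse triangle inequality trick is what makes this go through.
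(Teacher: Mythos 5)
Your proposal is correct and follows essentially the same route as the paper's proof: the same outlier-free decomposition with the reverse triangle inequality for sharpness, the same exact quadratic expansion of $(a^Ty)^2$ to get the pointwise bound $f_x(y,z)\le f(y,z)+\langle a,y-x\rangle^2$ for all three models, and the same identification of subgradients at the basepoint for the Lipschitz bound. In fact you supply one detail the paper leaves implicit, namely the explicit derivation of the bound on $\lipsymb$ from the definition of the tube $\cT_2$.
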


With this theorem in hand, we deduce that on the phase retrieval problem, Algorithm~\ref{alg:stoc_prox_outer_sc} with subgradient, clipped subgradient, and prox-linear models converges linearly to $\cX^\ast$ with high probability, whenever the method is initialized within constant relative error of the optimal solution.
\begin{thm}\label{thm:phase_convergence}
Fix constants $\gamma\in (0,2)$, $\varepsilon>0$, and $\delta'\in (0, 1)$.
Consider the subgradient, clipped subgradient, and prox-linear oracles
developed in Theorem~\ref{thm:phase_params}. Suppose we are given a point $x_0$
satisfying
$$
\dist(x_0, \{\pm x\}) \leq \gamma   \frac{(1-2\pfail)\tilde \mu }{8\weakphase} \cdot \|\bar x\|.
$$
Set parameters $ \rho_0, \alpha_0, \INNERIDX,\epsilon_0 , \TRIALCOUNT,
\OUTERIDX$ as in Theorem~\ref{thm:high_prob}. In addition, define the iterate
$x_\OUTERIDX = \RSMBMSCE(x_0, \rho_0, \alpha_0, \INNERIDX,\epsilon_0 ,
\TRIALCOUNT, \OUTERIDX)$. Then with probability $1-\delta'$, we have $\dist(x_\OUTERIDX,
\{\pm \bar x\}) \leq \varepsilon \|\bar x\|$ after
$$
O\left(  \left(\frac{\tilde \lipsymb \left(1 + \frac{(1-2\pfail)\tilde \mu}{2\weakphase}\right)}{ \tilde \mu(1-2\pfail) }\right)^2\log\left(\frac{(1-2\pfail)\tilde \mu /\weakphase}{\varepsilon}\right)\log\left(\frac{\log\left(\frac{(1-2\pfail)\tilde \mu /\weakphase}{\varepsilon}\right)}{\delta'}\right)\right)
$$
stochastic subgradient, stochastic clipped subgradient, or stochastic prox-linear iterations.
\end{thm}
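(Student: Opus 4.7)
The plan is to derive Theorem~\ref{thm:phase_convergence} as an immediate corollary of the general linear-convergence result Theorem~\ref{thm:high_prob}, with all problem parameters supplied by the identification in Theorem~\ref{thm:phase_params}. Since that earlier theorem has already verified Assumptions~\ref{assumption:sample}--\ref{assumption:lipschitz} for each of the three models simultaneously, with identical constants $\mu=(1-2\pfail)\tilde\mu\|\bar x\|$, $\quadapprox=2\weakphase$, and $\lipsymb\leq 2\tilde\lipsymb\|\bar x\|\bigl(1+(1-2\pfail)\tilde\mu/\weakphase\bigr)$, and with $\cS=\cX^\ast=\{\pm\bar x\}$, one single application of Theorem~\ref{thm:high_prob} should handle the subgradient, clipped subgradient, and prox-linear cases uniformly.

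First, I would verify that the initialization hypothesis $\dist(x_0,\cX^\ast)\leq R_0\leq \gamma\mu/(4\quadapprox)$ required by Theorem~\ref{thm:high_prob} matches the hypothesis stated in Theorem~\ref{thm:phase_convergence}. Substituting the identified constants yields
\[
\frac{\gamma\mu}{4\quadapprox}=\frac{\gamma(1-2\pfail)\tilde\mu\|\bar x\|}{8\weakphase},
\]
which is exactly the radius prescribed on $x_0$; I would therefore set $R_0$ equal to this quantity. Since the target tolerance in the statement is $\varepsilon\|\bar x\|$ rather than $\varepsilon$, I would also rescale the accuracy input to the black box accordingly.

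For the sample complexity, I would plug into the bound $O\bigl((\lipsymb/\mu)^2\log(R_0/(\varepsilon\|\bar x\|))\log(\log(R_0/(\varepsilon\|\bar x\|))/\delta')\bigr)$ supplied by Theorem~\ref{thm:high_prob}. The factor of $\|\bar x\|$ cancels in the ratio
\[
\frac{\lipsymb}{\mu}\leq \frac{2\tilde\lipsymb\bigl(1+(1-2\pfail)\tilde\mu/\weakphase\bigr)}{(1-2\pfail)\tilde\mu},
\]
and also in $R_0/(\varepsilon\|\bar x\|)$, which reduces to a constant multiple of $(1-2\pfail)\tilde\mu/(\weakphase\varepsilon)$; these two reductions together produce the logarithmic factors appearing in the claim. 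The constants such as $\tfrac{1}{8}$, the factor of $2$, and the cosmetic mismatch between $(1-2\pfail)\tilde\mu/\weakphase$ and $(1-2\pfail)\tilde\mu/(2\weakphase)$ all get absorbed into the hidden $O(\cdot)$.

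I do not expect any genuine obstacle here: the nontrivial work is already encapsulated in Theorems~\ref{thm:phase_params} and~\ref{thm:high_prob}, and what remains is essentially unit-tracking. The only item that requires a small amount of care is ensuring that $\|\bar x\|$ is propagated consistently through both $\lipsymb$ and $R_0$, so that the final complexity bound is correctly scale-invariant in the signal norm and the initialization radius is stated in the same relative-error units as the conclusion.
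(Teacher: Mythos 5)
Your proposal is correct and is exactly the route the paper takes: Theorem~\ref{thm:phase_convergence} is stated as a direct corollary of Theorem~\ref{thm:high_prob}, obtained by substituting $\mu=(1-2\pfail)\tilde\mu\|\bar x\|$, $\quadapprox=2\weakphase$, $\lipsymb\leq 2\tilde\lipsymb\|\bar x\|(1+(1-2\pfail)\tilde\mu/\weakphase)$ from Theorem~\ref{thm:phase_params}, taking $R_0=\gamma\mu/(4\quadapprox)$ and target accuracy $\varepsilon\|\bar x\|$, with the $\|\bar x\|$ factors cancelling in $\lipsymb/\mu$ and in $R_0/(\varepsilon\|\bar x\|)$ just as you describe. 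The bookkeeping, including absorbing the constant-factor discrepancies into the $O(\cdot)$, is all that is required.
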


We now examine Theorem~\ref{thm:phase_convergence} in the setting where the
measurement vectors $a$ follow a Gaussian distribution. We note, however, that
the results of this section extend far beyond the Gaussian setting to heavy
tailed distributions.

\begin{example}[Gaussian setting]\label{example:gaussianphase}
{\rm Let us analyze the population setting where $a \sim N(0, I_{d\times d})$.
In this case, it is straightforward to show by direct computation that
$$
\tilde \mu \gtrsim 1; \qquad \eta = 1; \qquad \lipsymb \lesssim \sqrt{d}.
$$
Consequently, if $x_0 \in \RR^d$ has error
$
\dist(x_0, \{\pm \bar x\}) \leq c(1-2\pfail) \cdot \|\bar x\|,
$ for some numerical constant $c$,
then with probability $1-\delta$, Algorithm~\ref{alg:stoc_prox_outer_sc} will
produce a point $x_\OUTERIDX$ satisfying $\dist(x_\OUTERIDX, \{\pm x\}) \leq
\varepsilon \| \bar x\|$ using only
$$
O\left(\frac{d}{(1-2\pfail)^2} \log\left(\frac{1}{\varepsilon}\right)\log\left(\frac{\log\left(\frac{1}{\varepsilon}\right)}{\delta}\right)\right)
$$
samples. We note that the spectral initialization of Duchi and Ruan~\cite[Proposition 3]{duchi_ruan_PR} produces such a point $x_0$ with sample complexity $O(d(1-2\pfail)^{-2})$ with high probability. Therefore, when taken together, combining this spectral initialization with Algorithm~\ref{alg:stoc_prox_outer_sc} produces a point $x_T$ satisfying $\dist(x_\OUTERIDX, \{\pm \bar x\}) \leq \varepsilon \| \bar x\|$ with $O\left(\frac{d}{(1-2\pfail)^2} \log\left(\frac{1}{\varepsilon}\right)\log\left(\log\left(\frac{1}{\varepsilon}\right)/\delta\right)\right)$ samples, which is the best known sample complexity for Gaussian robust phase retrieval, up to logarithmic factors. We note by leveraging standard concentration results, it is possible to prove similar results for empirical average minimization $\min_x \frac{1}{m}\sum_{i=1}^m f(x,z_i)$, provided $z_i$ are i.i.d samples of $z$ and the number of samples satisfies 
 $m \gtrsim d (1-2\pfail)^{-2}$. }
\end{example}

\subsection{Robust blind deconvolution}
We next apply the proposed algorithms to the blind deconvolution problem. For a detailed discussion of the the problem, see for example the papers \cite{ahmed2014blind,li2018rapid}.
Henceforth, fix integers $d_1, d_2 \in \NN$ and an underlying signal $(\bar x, \bar y) \in \RR^{d_1}\times \R^{d_2}$.
Define the quantity
$$
\blindnorm := \|\bar x\|\|\bar y\|.
$$
Without loss of generality, we will assume $\|\bar x\| = \|\bar y\|$.  We consider
the following measurement model:

\begin{assumption}[Robust Blind Deconvolution]\label{assump:blind_assump}
Consider random $\ell \in \RR^{d_1}$, $r \in \RR^{d_2}$, $\xi \in \RR$, and
$\bern \in \{0, 1\}$ and the measurement model
$$
b = \dotp{\ell, \bar x}\dotp{r, \bar y}  + \bern \cdot \xi.
$$
We make the following assumptions on the random data.
\begin{enumerate}
\item The variable $\bern$ is independent of $\xi$, $\ell$, and $r$. The failure probability $\pfail$ satisfies $$\pfail := P(\bern \neq 0) < 1/2.$$
\item We have $\EE\left[ |\xi| \right] < \infty$.
\item There exists constants $\weakphase, \tilde \mu, \tilde \lipsymb > 0$ such
that for all $M\in \RR^{d_1 \times d_2}$ with $\|M\|_F = 1$ and $\rank(M) \leq
2$, we have
$$
\tilde  \mu \leq \EE\left[ |\ell^T M r| \right] \leq \weakphase
$$
\item There exists constants $\tilde \lipsymb > 0$ such that for all $v \in \sphere^{d_1-1}, w \in \sphere^{d_2-1}$, we have
%\begin{enumerate}
%\item $\EE\left[\sqrt{\dotp{a, v}^2\|a\|^2}\right] = \tilde \lipsymb  <
%\infty$ such that
%\item For all unit norm $v \in \RR^d$, we have
%%\begin{align*}
% $\EE\left[ \dotp{a, v}^2\right] \leq \weakphase.$
%% \end{align*}
%\item For all unit norm $v, w \in \RR^d$, we have
$$
 \sqrt{\EE\left[\left(|\dotp{\ell, v}| \|r\| + |\dotp{r, w}|\|\ell\|\right)^2\right]} \leq \tilde \lipsymb.
$$
%\end{enumerate}
\end{enumerate}
\end{assumption}

Based on the above assumptions, the following theorem develops three models for the robust blind deconvolution problem. We defer the proof to Section~\ref{appendix:thm:blind_params}.
\begin{thm}[Blind deconvolution parameters]\label{thm:blind_params}
	Fix a real $\nu > 1$ and define the  set:
	$$
	\cX = \{ (x, y) \in \RR^{d_1 + d_2} \colon \|x\| \leq \nu \blindnorm, \| y\| \leq \nu \blindnorm\}.
	$$
Consider the population data $z = (a, \bern, \xi)$ and the form the optimization problem
$$
\min_{x,y} f(x, y) := \EE\left[ f((x,y), z)\right]\quad  \text{ where } \quad f((x,y), z) :=  | \dotp{\ell,  x}\dotp{r,  y} - b|.
$$
Then the optimal solution set is $\cX^\ast = \{(\alpha \bar x, (1/\alpha)\bar y) \mid (1/\nu) \leq |\alpha| \leq \nu \}$ and $f$ satisfies the sharpness assumption~\ref{assumption:sharp} with $\cS=\cX^*$ and 
$$
\mu =   \frac{\tilde \mu (1-2\pfail)\sqrt{\blindnorm}}{2\sqrt{2}(\nu + 1)}.%\dist((x, y), \cX^\ast)
$$
 Moreover, given a measurable selection $G((x,y), z) \in \partial f((x,y), z)$, the models
\begin{itemize}
\item[] {\bf (subgradient)} $f^s_{(x,y)}((\hat x, \hat y), z) = f((x, y), z) + \dotp{G((x, y), z), (\hat x, \hat y) - (x, y)}$
\item[] {\bf (clipped subgradient)} $$f^{cl}_{(x, y)}((\hat x, \hat y), z) = \max \{ f((x, y), z) + \dotp{G((x, y), z), (\hat x, \hat y) - (x, y)}, 0\}$$
%\item[] {\bf (clipped subgradient)} $f^{sc}_x(y, z) = \max \{ f(x, z) + \dotp{G(x, z), y - x}, 0\}$
\item[] {\bf (prox-linear)} $$f^{pl}_{(x, y)}((\hat x, \hat y), z) = | \dotp{\ell,  x}\dotp{r,  y} - (\dotp{\ell, \bar x}\dotp{r, \bar y}  + \bern \cdot \xi) +  \dotp{\ell, x}\dotp{r, \hat y - y} + \dotp{r, y} \dotp{\ell , \hat x -x}| .$$
\end{itemize}
 satisfy Assumptions~\ref{assumption:sample}-\ref{assumption:lipschitz} with $\quadapprox = \weakphase$, $L((x,
 y), z) = |\dotp{\ell, x}| \|r\| + |\dotp{r, y}|\|\ell\|$ and $\lipsymb
 =\nu\tilde \lipsymb \sqrt{D}  $. %On the other hand, the proximal point model
 %$f_x(y, z) = f(y, z) + \frac{\|a\|^2}{2} \|y - x\|^2$ satisfies
 %Assumption~\ref{assump:main} with $\rho
\end{thm}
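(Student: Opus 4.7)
The proof mirrors the one for Theorem~\ref{thm:phase_params} and splits into four parts: identifying $\cX^\ast$ and establishing sharpness~\ref{assumption:sharp}, verifying one-sided accuracy~\ref{assumption:oneside} for each of the three models, checking convexity~\ref{assumption:convex}, and bounding the local Lipschitz moment~\ref{assumption:lipschitz}. Throughout, I would work with $c(x, y, z) := \dotp{\ell, x}\dotp{r, y} - b$ and exploit the fact that $c(\cdot, \cdot, z)$ is \emph{bilinear} in the factor pair.

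To identify $\cX^\ast$ and prove sharpness, I would split the expectation over $\bern \in \{0,1\}$ and apply the reverse triangle inequality on the corrupted branch to obtain
\[
f(x,y) - \pfail\,\EE |\xi| \;\geq\; (1-2\pfail)\,\EE\bigl| \ell^\top (xy^\top - \bar x \bar y^\top) r \bigr|.
\]
Since $xy^\top - \bar x \bar y^\top$ has rank at most $2$, Assumption~\ref{assump:blind_assump}(3) yields the lower bound $(1-2\pfail)\tilde\mu\,\|xy^\top - \bar x \bar y^\top\|_F$, with equality (modulo the noise offset) exactly when $xy^\top = \bar x \bar y^\top$. Intersecting this equality constraint with the box in $\cX$ and using $\|\bar x\| = \|\bar y\| = \sqrt{\blindnorm}$ identifies the one-parameter orbit $\cX^\ast$. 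To convert the Frobenius distance to the factor distance, I would invoke the standard low-rank lemma
\[
\|xy^\top - \bar x \bar y^\top\|_F \;\geq\; \frac{\sqrt{\blindnorm}}{2\sqrt{2}(\nu+1)}\,\dist\bigl((x,y), \cX^\ast\bigr) \qquad \forall (x,y)\in \cX,
\]
in the spirit of~\cite[Lemma 3.1]{charisopoulos2019composite}, proved by picking $\alpha$ aligning $(x,y)$ with its closest element of $\cX^\ast$ and using the factor-norm constraint to track the $(\nu+1)$ dependence.

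For one-sided accuracy and convexity of all three models, bilinearity of $c$ yields the \emph{exact} identity
\[
c(\hat x, \hat y, z) - c(x,y,z) - \dotp{\nabla c(x,y,z), (\hat x - x, \hat y - y)} = \dotp{\ell, \hat x - x}\dotp{r, \hat y - y}.
\]
Combining this identity with the elementary bound $|a+b| \geq |a| + \sign(a)\,b$ shows pointwise, for each of the subgradient, clipped subgradient, and prox-linear models, that
\[
f_{(x,y)}\bigl((\hat x, \hat y), z\bigr) - f\bigl((\hat x, \hat y),z\bigr) \;\leq\; \bigl| \dotp{\ell, \hat x - x}\dotp{r, \hat y - y}\bigr|.
\]
Taking expectations and applying Assumption~\ref{assump:blind_assump}(3) to the rank-one outer product $(\hat x - x)(\hat y - y)^\top$ bounds the right-hand side by $\weakphase\,\|\hat x - x\|\|\hat y - y\| \leq \tfrac{\weakphase}{2}\|(\hat x, \hat y) - (x, y)\|^2$, giving~\ref{assumption:oneside} with $\eta = \weakphase$. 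Convexity~\ref{assumption:convex} is immediate since, as a function of $(\hat x, \hat y)$, the subgradient model is affine, the prox-linear model is the absolute value of an affine map, and the clipped subgradient model is a max of two affine functions.

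For~\ref{assumption:lipschitz}, I would note that at the base point the subgradient of each of the three models (up to a sign) equals $(\dotp{r,y}\ell,\, \dotp{\ell, x} r)$, whose norm is bounded by $L((x,y),z) := |\dotp{\ell, x}|\|r\| + |\dotp{r, y}|\|\ell\|$. On $\cT_2 \subseteq \cX$, factoring $\max\{\|x\|,\|y\|\} \leq \nu\sqrt{\blindnorm}$ out of $L$ and applying Assumption~\ref{assump:blind_assump}(4) to the unit vectors $v = x/\|x\|$ and $w = y/\|y\|$ yields $\sqrt{\EE L^2} \leq \nu \tilde\lipsymb \sqrt{\blindnorm}$. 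The main obstacle in the whole argument is the sharpness step: because $\cX^\ast$ is a one-dimensional rescaling orbit rather than a pair of isolated signals (as in the phase retrieval case), the polar-alignment argument must handle the mismatch between factor rescaling and matrix scaling carefully in order to produce the constant $1/(2\sqrt{2}(\nu+1))$.
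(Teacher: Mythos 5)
Your proposal follows essentially the same route as the paper's proof: the clean/corrupted split plus the rank-$2$ moment bound and the factor-distance lemma from \cite{charisopoulos2019composite} for sharpness, the exact bilinear second-order remainder $\dotp{\ell,\hat x - x}\dotp{r,\hat y - y}$ combined with the ordering subgradient $\leq$ prox-linear for the accuracy bound with $\quadapprox=\weakphase$, and factoring $\nu\sqrt{\blindnorm}$ out of $L$ on $\cT_2$ for the Lipschitz moment. The only differences are cosmetic (you additionally spell out the identification of $\cX^\ast$ and the convexity check, which the paper leaves implicit), so the argument is correct as proposed.
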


With this theorem in hand, we deduce that on the blind deconvolution problem, Algorithm~\ref{alg:stoc_prox_outer_sc} with subgradient, clipped subgradient, and prox-linear models converges linearly to $\cX^\ast$ with high probability, whenever the method is initialized within constant relative error of the solution set.
\begin{thm}\label{thm:blind_convergence}
Fix constants $\gamma\in (0,2)$, $\varepsilon>0$, and $\delta'\in (0, 1)$.
Consider the subgradient, clipped subgradient, and prox-linear oracles
developed in Theorem~\ref{thm:phase_params}. Suppose we are given a pair $(x_0,y_0)
\in \RR^{d_1 + d_2}$ satisfying
$$
\dist((x_0, y_0), \cX^\ast ) \leq  \gamma \frac{\tilde \mu (1-2\pfail)\sqrt{\blindnorm}}{8\sqrt{2}(\nu + 1)\weakphase}.
$$
Set parameters $\rho_0, \alpha_0, \INNERIDX,\epsilon_0 , \TRIALCOUNT,
\OUTERIDX$ as in Theorem~\ref{thm:high_prob}. In addition, we define the
iterate $(x_\OUTERIDX,y_\OUTERIDX)= \RSMBMSCE((x_0, y_0), \rho_0, \alpha_0, \INNERIDX,\epsilon_0
, \TRIALCOUNT, \OUTERIDX)$. Then with probability $1-\delta'$, we have
$\dist((x_\OUTERIDX,y_\OUTERIDX), \cX) \leq \varepsilon \sqrt{D}$ after
$$
O\left(  \left(\frac{\nu^2\tilde \lipsymb}{ \tilde \mu (1-2\pfail) }\right)^2\log\left(\frac{(1-2\pfail)\tilde \mu/\weakphase}{\varepsilon}\right)\log\left(\frac{\log\left(\frac{(1-2\pfail)\tilde \mu /\weakphase}{\varepsilon}\right)}{\delta'}\right)\right)
$$
 stochastic subgradient, stochastic clipped subgradient, or stochastic
 prox-linear iterations.
\end{thm}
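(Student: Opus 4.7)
The plan is to deduce Theorem~\ref{thm:blind_convergence} directly from the general high-probability guarantee of Theorem~\ref{thm:high_prob} by plugging in the problem-specific constants supplied by Theorem~\ref{thm:blind_params}. Concretely, I would first invoke Theorem~\ref{thm:blind_params} to certify that the blind deconvolution loss $f(x,y)=\EE_z[|\dotp{\ell,x}\dotp{r,y}-b|]$, restricted to the closed convex set $\cX$, together with any of the three model families (subgradient, clipped subgradient, or prox-linear), satisfies Assumptions~\ref{assumption:sample}-\ref{assumption:lipschitz} with $\cS=\cX^\ast$, sharpness constant $\mu=\tilde\mu(1-2\pfail)\sqrt{\blindnorm}/(2\sqrt{2}(\nu+1))$, weak-convexity constant $\quadapprox=\weakphase$, and local stochastic-subgradient bound $\lipsymb\le\nu\tilde\lipsymb\sqrt{\blindnorm}$.

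Next I would verify the initialization hypothesis of Theorem~\ref{thm:high_prob}. Substituting the above constants,
$$\frac{\gamma\mu}{4\quadapprox}=\frac{\gamma\tilde\mu(1-2\pfail)\sqrt{\blindnorm}}{8\sqrt{2}(\nu+1)\weakphase},$$
which is exactly the bound on $\dist((x_0,y_0),\cX^\ast)$ assumed in the statement. Setting $R_0$ equal to this quantity and choosing the algorithm parameters $\rho_0,\alpha_0,\INNERIDX,\epsilon_0,\TRIALCOUNT,\OUTERIDX$ as prescribed in Theorem~\ref{thm:high_prob} with target accuracy $\varepsilon\sqrt{\blindnorm}$ (in place of $\varepsilon$) and failure probability $\delta'$, the general theorem immediately yields $\dist((x_\OUTERIDX,y_\OUTERIDX),\cX^\ast)\le\varepsilon\sqrt{\blindnorm}$ with probability at least $1-\delta'$.

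Finally, I would translate the sample complexity bound of Theorem~\ref{thm:high_prob} into the stated form. A direct calculation yields
$$\left(\frac{\lipsymb}{\mu}\right)^2\le\Bigl(\tfrac{2\sqrt{2}\,\nu(\nu+1)\tilde\lipsymb}{\tilde\mu(1-2\pfail)}\Bigr)^2\lesssim\Bigl(\tfrac{\nu^2\tilde\lipsymb}{\tilde\mu(1-2\pfail)}\Bigr)^2,$$
after absorbing $(\nu+1)^2\le(2\nu)^2$ into $\nu^4$. The logarithmic ratio $R_0/(\varepsilon\sqrt{\blindnorm})$ collapses, up to the $(\nu+1)\weakphase$ factor hidden inside the $O(\cdot)$, to $(1-2\pfail)\tilde\mu/(\weakphase\varepsilon)$, matching the stated log factors. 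The one genuinely non-routine ingredient is buried inside Theorem~\ref{thm:blind_params}: because the stochastic subgradients $G((x,y),z)$ grow with $\|(x,y)\|$, the global Lipschitz condition fails, and one must certify the local bound~\ref{assumption:lipschitz} on the tube $\cT_2\subseteq\cX$ by combining Assumption~\ref{assump:blind_assump}(4) with the fact that every point of $\cT_2$ has norm comparable to $\sqrt{\blindnorm}$. This is exactly the reason the proof must pass through the local master theorem rather than a global one; once that local-Lipschitz verification is in hand, Theorem~\ref{thm:blind_convergence} is a direct corollary, and no further obstacles arise.
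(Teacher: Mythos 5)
Your proposal is correct and matches the paper's (implicit) argument exactly: the paper treats Theorem~\ref{thm:blind_convergence} as a direct corollary of Theorem~\ref{thm:high_prob} with the constants $\mu$, $\quadapprox$, $\lipsymb$ certified by Theorem~\ref{thm:blind_params}, and your substitutions (the identity $\gamma\mu/4\quadapprox=\gamma\tilde\mu(1-2\pfail)\sqrt{\blindnorm}/(8\sqrt{2}(\nu+1)\weakphase)$, the target accuracy $\varepsilon\sqrt{\blindnorm}$, and the absorption of $\nu(\nu+1)$ into $O(\nu^2)$) are all the right ones. Your closing remark correctly identifies that the only substantive verification, the local Lipschitz bound on $\cT_2$, lives inside Theorem~\ref{thm:blind_params}.
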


We now examine Theorem~\ref{thm:blind_convergence} in the setting where the
measurement vectors $\ell, r$ follow a Gaussian distribution. We note, however,
that the results of this section extend far beyond the Gaussian setting to
heavy tailed distributions.

\begin{example}[Gaussian setting]\label{example:gaussianblind}
{\rm Let us analyze the population setting where $(\ell, r) \sim N(0, I_{(d_1 +
d_2) \times (d_1 + d_2)})$. In this case, one can show by direct computation
that
$$
\tilde \mu \gtrsim 1; \qquad \eta \lesssim 1; \qquad \lipsymb \lesssim \sqrt{d_1 + d_2}.
$$
Consequently, if $(x_0, y_0) \in \RR^{d_1 + d_2}$ has error
$
\dist((x_0, y_0), \cX^\ast) \leq c(1-2\pfail) \cdot \sqrt{D}/\nu,
$ for some numerical constant $c>0$,
then with probability $1-\delta$, Algorithm~\ref{alg:stoc_prox_outer_sc} will
produce a pair $(x_\OUTERIDX, y_\OUTERIDX)$ satisfying $\dist((x_\OUTERIDX,
y_\OUTERIDX), \cX^\ast) \leq \varepsilon \sqrt{D}$ using only
$$
O\left(\frac{\nu^2(d_1 + d_2)}{(1-2\pfail)^2} \log\left(\frac{1}{\varepsilon}\right)\log\left(\frac{\log\left(\frac{1}{\varepsilon}\right)}{\delta}\right)\right)
$$
samples. We note that the spectral initialization of  Charisopoulos et al.~\cite[Theorem 5.4 and Corollary 5.5]{charisopoulos2019composite} can produce such a pair $(x_0, y_0)$ with sample complexity $O(\nu^2(d_1 + d_2) (1-2\pfail)^{-2})$ with high probability with $\nu \leq \sqrt{3}$. Therefore, when taken together, combining this spectral initialization with Algorithm~\ref{alg:stoc_prox_outer_sc} produces a pair $(x_T, y_T)$ satisfying $\dist((x_\OUTERIDX, y_\OUTERIDX), \cX^\ast) \leq \varepsilon \| \bar x\|$ with $O\left(\frac{d_1 + d_2}{(1-2\pfail)^2} \log\left(\frac{1}{\varepsilon}\right)\log\left(\log\left(\frac{1}{\varepsilon}\right)/\delta\right)\right)$ samples, which is the best known sample complexity for Gaussian robust blind deconvolution, up to logarithmic factors. We note by leveraging standard concentration results, it is possible to prove similar results for empirical average minimization $\min_{(x,y)\in \cX} \frac{1}{m}\sum_{i=1}^m f((x,y),z_i)$, provided $z_i$ are i.i.d samples of $z$ and the number of samples satisfies 
 $m \gtrsim (d_1 + d_2) (1-2\pfail)^{-2}$.}
\end{example}
%%%%%%%%%
%\input{examples}
%%%%%%%%

%%%%%%%%%
%%Numerics
\section{Numerical Experiments}\label{sec:numerics}

We now evaluate how Algorithm~\ref{alg:stoc_prox_outer} performs both on the
statistical recovery problems of Section~\ref{sec:examples} and on a sparse logistic regression problem. We test the convergence behavior, sensitivity to step size, and convergence to an active manifold. While testing the algorithms, we found that Algorithms~\ref{alg:stoc_prox_outer} and~\ref{alg:stoc_prox_outer_sc} perform similarly, despite Algorithm~\ref{alg:stoc_prox_outer_sc} having superior theoretical guarantees. Thus, we do not evaluate Algorithm~\ref{alg:stoc_prox_outer_sc}.
The problems of Section~\ref{sec:examples} are both convex composite losses of the form in Example~\ref{example:composite}. For these problems, we therefore implement all four models from Example~\ref{example:composite}, using the closed-form solutions developed in~\cite[Section 5]{davis2019stochastic}. For the sparse logistic regression problem, we implement the stochastic proximal gradient method and measure convergence to the optimal support pattern.
We provide a reference implementation~\cite{RefImpl} of the methods in \texttt{Julia}.

\subsection{Convergence behavior}\label{sec:convergencebehav}
In this section, we demonstrate that Algorithm~\ref{alg:stoc_prox_outer} converges linearly on the Gaussian
robust phase retrieval and blind deconvolution problems of Section~\ref{sec:examples} for a particular dimension, noise distribution, corruption frequency, and initialization quality. In phase retrieval, we set $d = 100$ and in blind deconvolution, we set $d_1 = d_2 = d. $ The measurements are corrupted independently with probability $\pfail$: for phase retrieval, the corruption obeys $\xi = \abs{g},
\; g \sim N(0, 100)$, while for blind deconvolution, it obeys $\xi \sim N(0, 100)$.
The algorithms are all randomly initialized at a fixed distance $R_0 > 0$ from the ground truth. The ground truth is normalized in all cases.
%
%at a point $x_0$ satisfying
%\[
%	x_0 = x_{\sharp} + R_0 \frac{\Delta}{\norm{\Delta}_2}, \quad
%	\Delta \sim N(0, I_d),
%\]
%o that $\norm{x - x_{\sharp}}= R_0$ where $x_\sharp$ is of unit norm..
We use Examples~\ref{example:gaussianphase} and~\ref{example:gaussianblind} to estimate $\lipsymb$, $\quadapprox$, and $\mu$, and we set $\gamma = 1$, $R_0 = 0.25$, $\delta_2 =
\frac{1}{\sqrt{10}}$, and target accuracy $\varepsilon = 10^{-5}$ to obtain
$T, K$ and $\alpha_0$ parameters as in Theorem~\ref{thm:weak_setting}% and Examples~\ref{example:gaussianphase} and~\ref{example:gaussianblind}:
%$\gamma = 1$, $\mu = 1 - 2 \pfail$, $R_0 = 0.25$, $\delta_2 =
%\frac{1}{\sqrt{10}}$, and target accuracy $\varepsilon = 10^{-5}$ to obtain
%$T, K$ and $\alpha_0$. %With these settings, the initial point $x_0$ satisfies $\mathrm{dist}(y_0, \cX^\ast) = R_0
%\leq \frac{\gamma \mu}{\eta}$ for all the settings of $\pfail$ used below.

%\textcolor{red}{Needs to be rewritten using the way the theorem is currently stated.}
%\textcolor{magenta}{- Should be consistent with theorem now.}

Figures \ref{fig:conv-finsample-pfail-2} and \ref{fig:conv-pfail-2} depict the convergence behavior of
Algorithm~\ref{alg:stoc_prox_outer} on robust
phase retrieval and blind deconvolution problems in finite sample and streaming settings, respectively. In these plots, solid lines with markers show the
mean behavior over $10$ runs, while the transparent overlays show one sample standard deviation above and below the mean.
In the finite-sample
instances,  we use $m = 8 \cdot d$ measurements and corrupt a fixed fraction $\pfail$ with large magnitude sparse noise; see Figure~\ref{fig:conv-finsample-pfail-2}. In the streaming instances, we draw a new i.i.d.\ sample at each iteration and corrupt it independently with probability $\pfail$; see Figure~\ref{fig:conv-pfail-2}. In both figures, we plot in red the rate guaranteed by Theorem~\ref{thm:weak_setting} and observe that the algorithms behave consistently with these guarantees.
In presence of noise, the algorithms all converge linearly at the rate predicted by Theorem~\ref{thm:weak_setting}, while
in the noiseless case, all except the
subgradient method converge to an exact solution (modulo numerical accuracy) within far fewer iterations.

\begin{figure}[h!]
	\centering
	\begin{minipage}{0.48 \textwidth}
		\includegraphics[width=\linewidth]{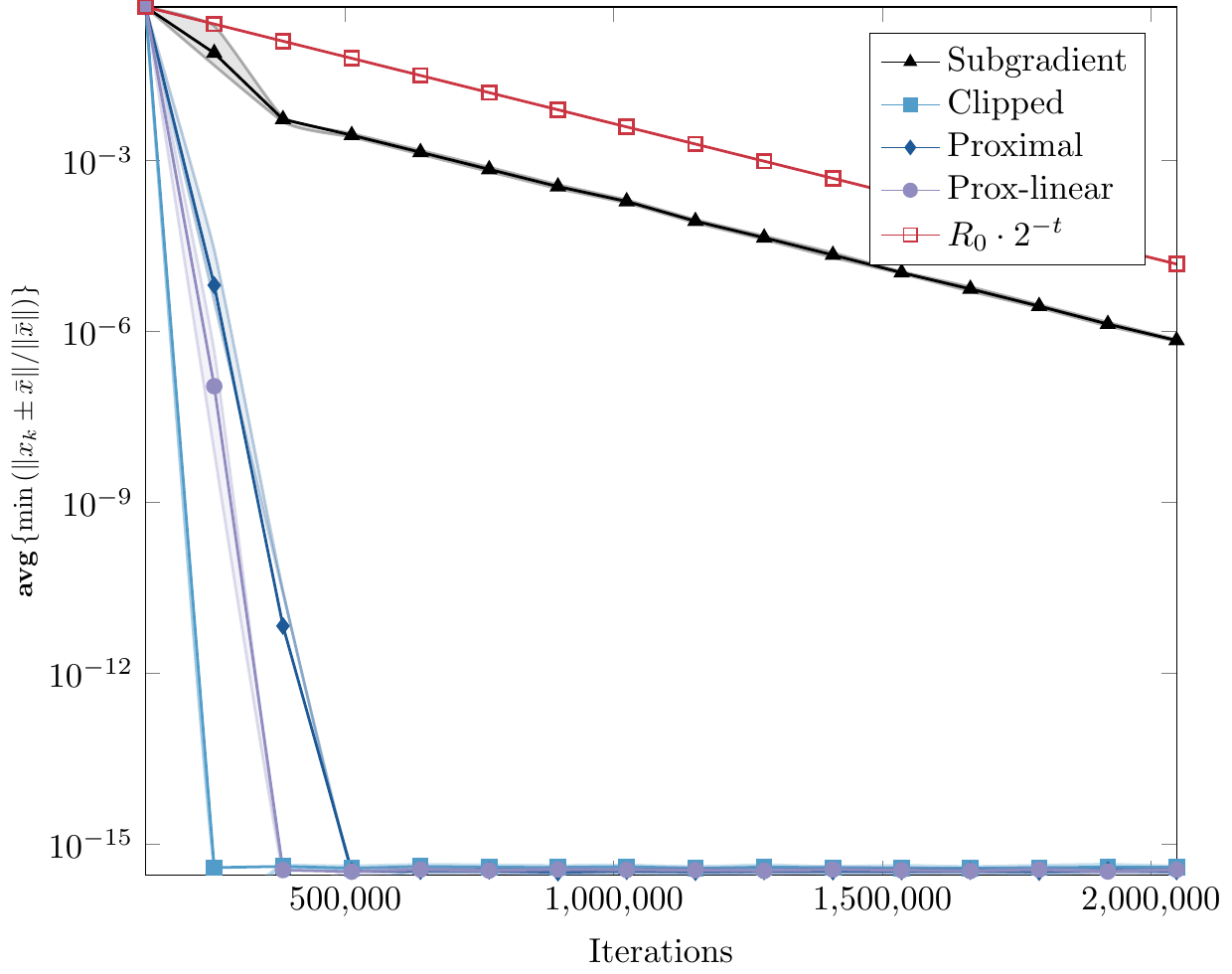}
	\end{minipage}
	\begin{minipage}{0.48 \textwidth}
		\includegraphics[width=\linewidth]{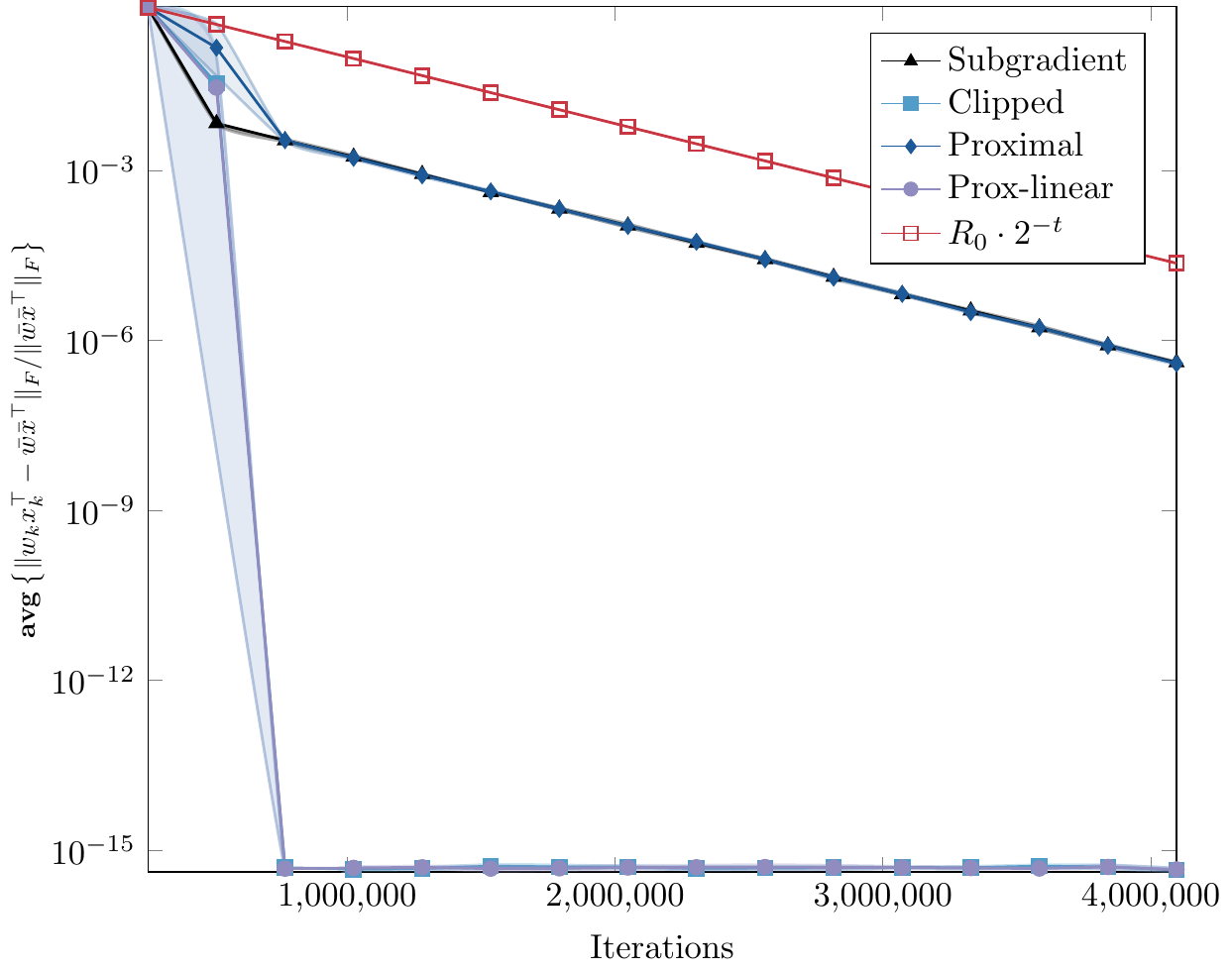}
	\end{minipage}

	\begin{minipage}{0.48 \textwidth}
		\includegraphics[width=\linewidth]{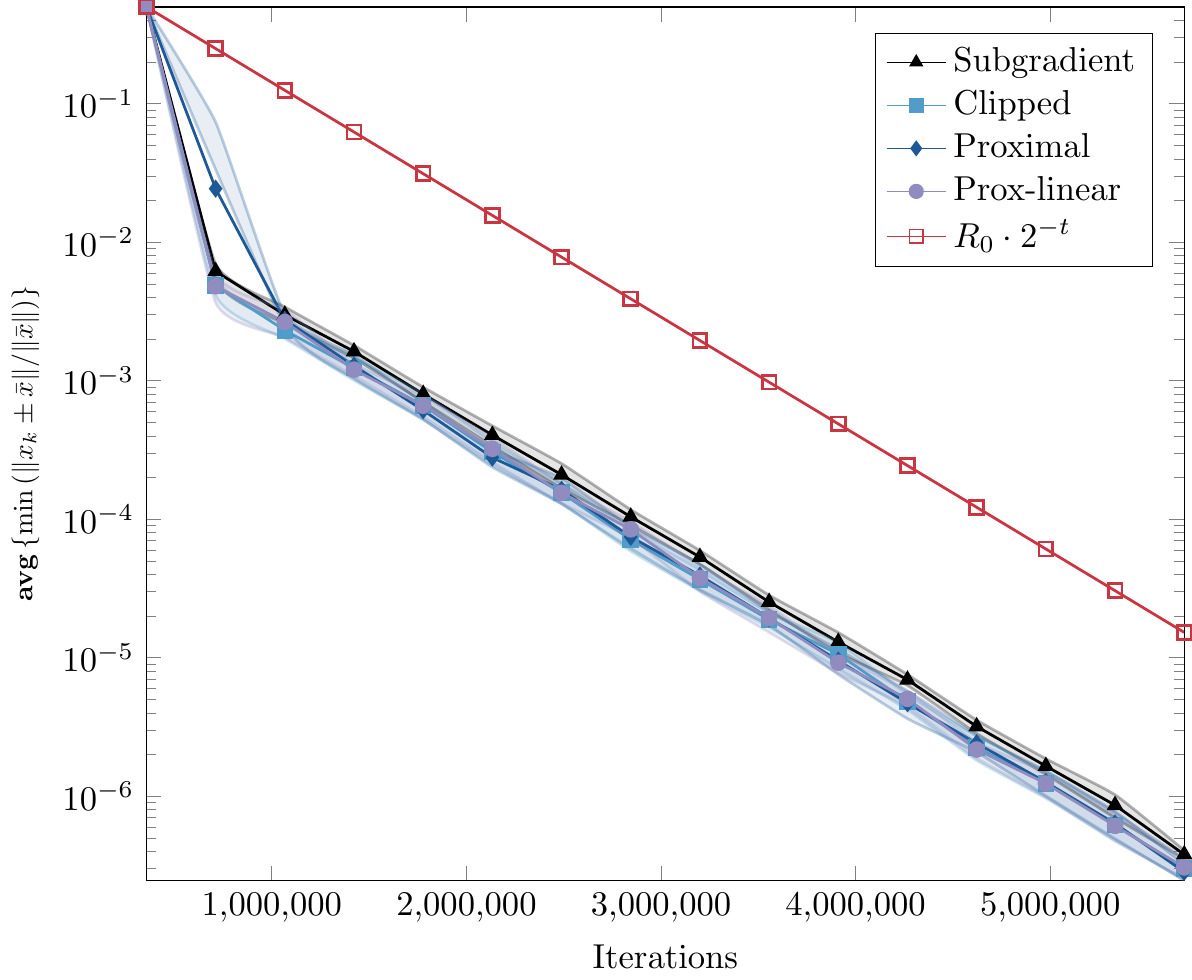}
	\end{minipage}
	\begin{minipage}{0.48 \textwidth}
	   \includegraphics[width=\linewidth]{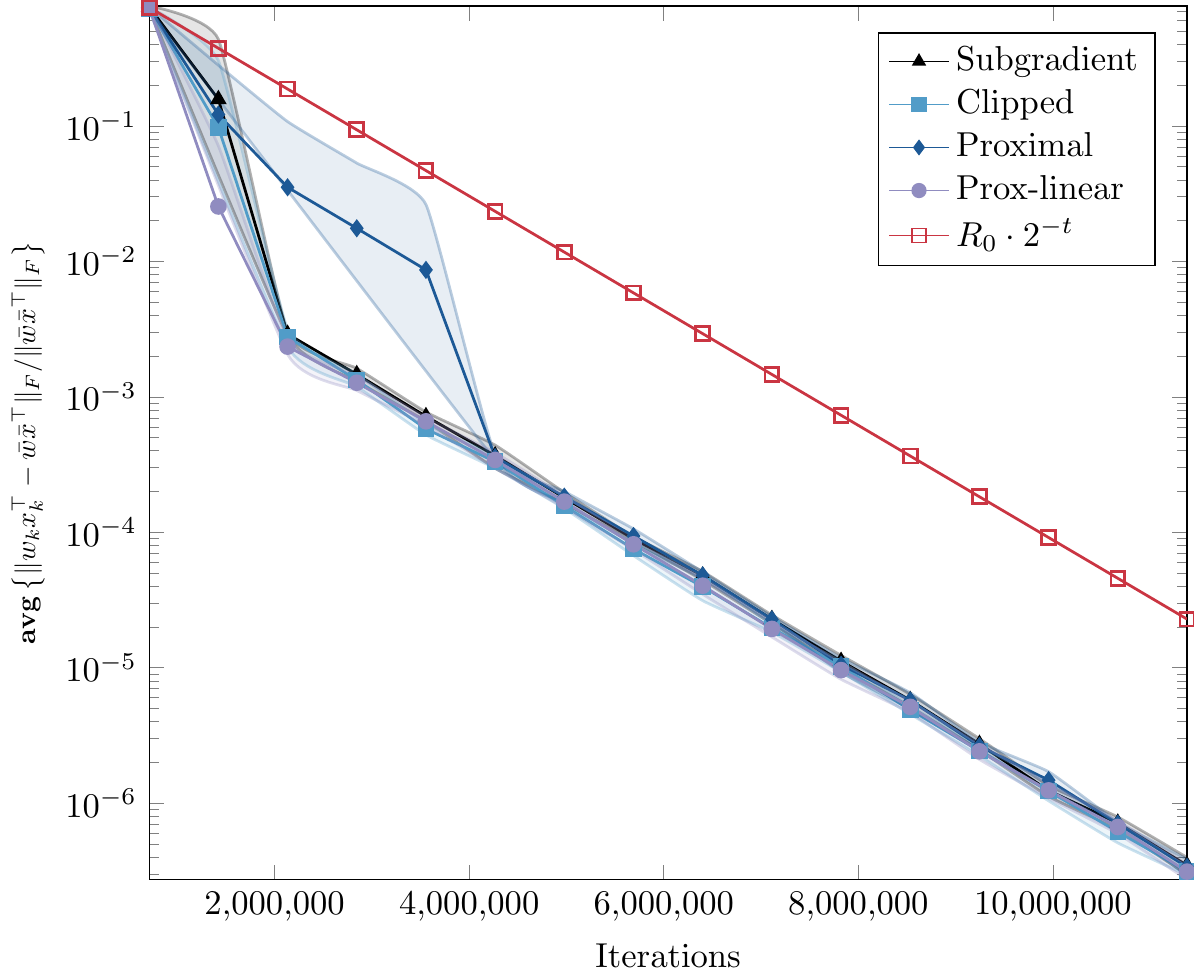}
	\end{minipage}
	\caption{Convergence behavior for $d = 100$ with finite
	sample size $m=8\cdot d$.  Phase Retrieval (left column), Blind Deconvolution (right column),   $\pfail = 0.0$ (top row), $\pfail =0.2$ (bottom row). Average over $10$ runs.}
	\label{fig:conv-finsample-pfail-2}
\end{figure}

\begin{figure}[h!]
	\begin{minipage}{0.48 \textwidth}
		\includegraphics[width=\linewidth]{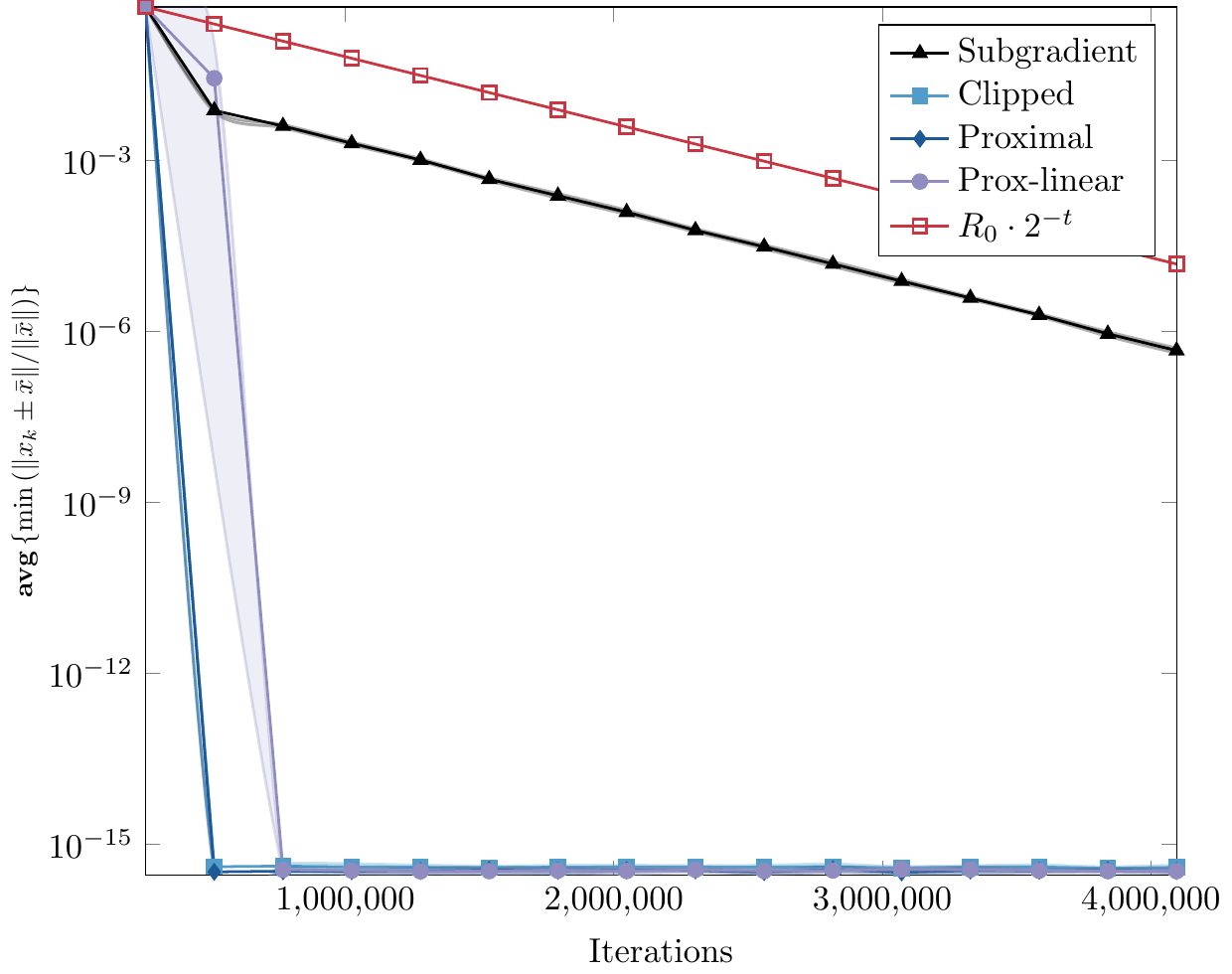}
	\end{minipage}
	\begin{minipage}{0.48 \textwidth}
		\includegraphics[width=\linewidth]{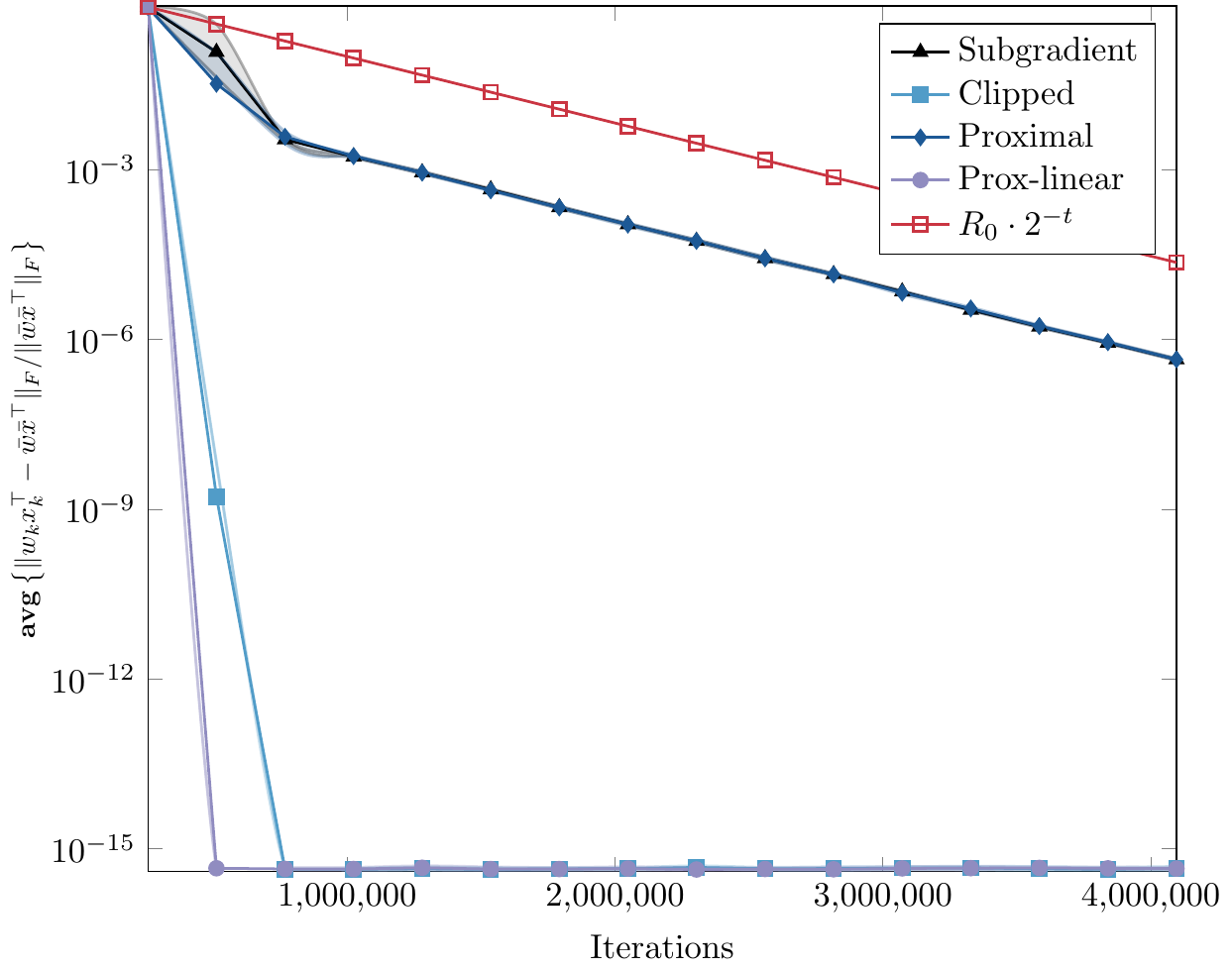}
	\end{minipage}

	\begin{minipage}{0.48 \textwidth}
		\includegraphics[width=\linewidth]{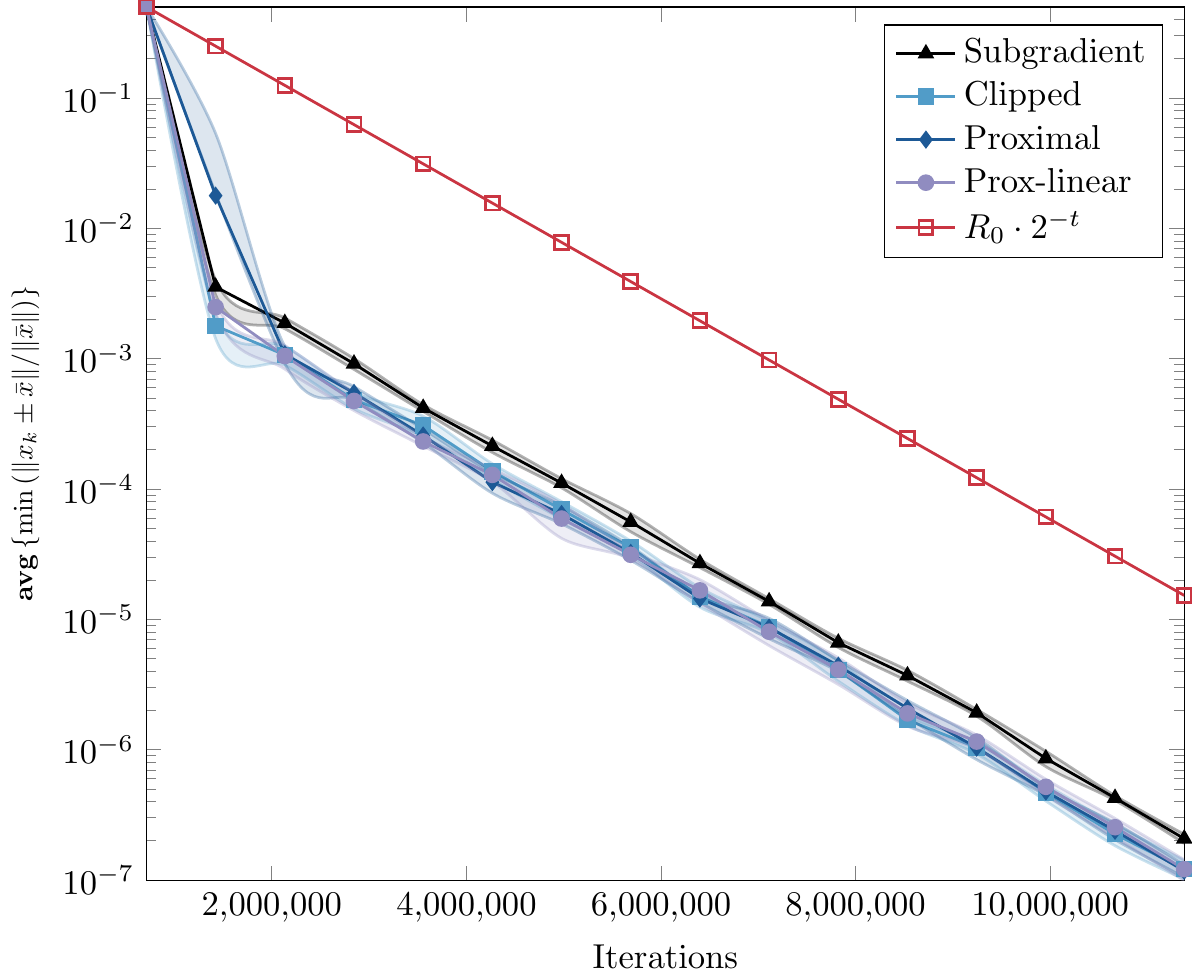}
	\end{minipage}
	\begin{minipage}{0.48 \textwidth}
		\includegraphics[width=\linewidth]{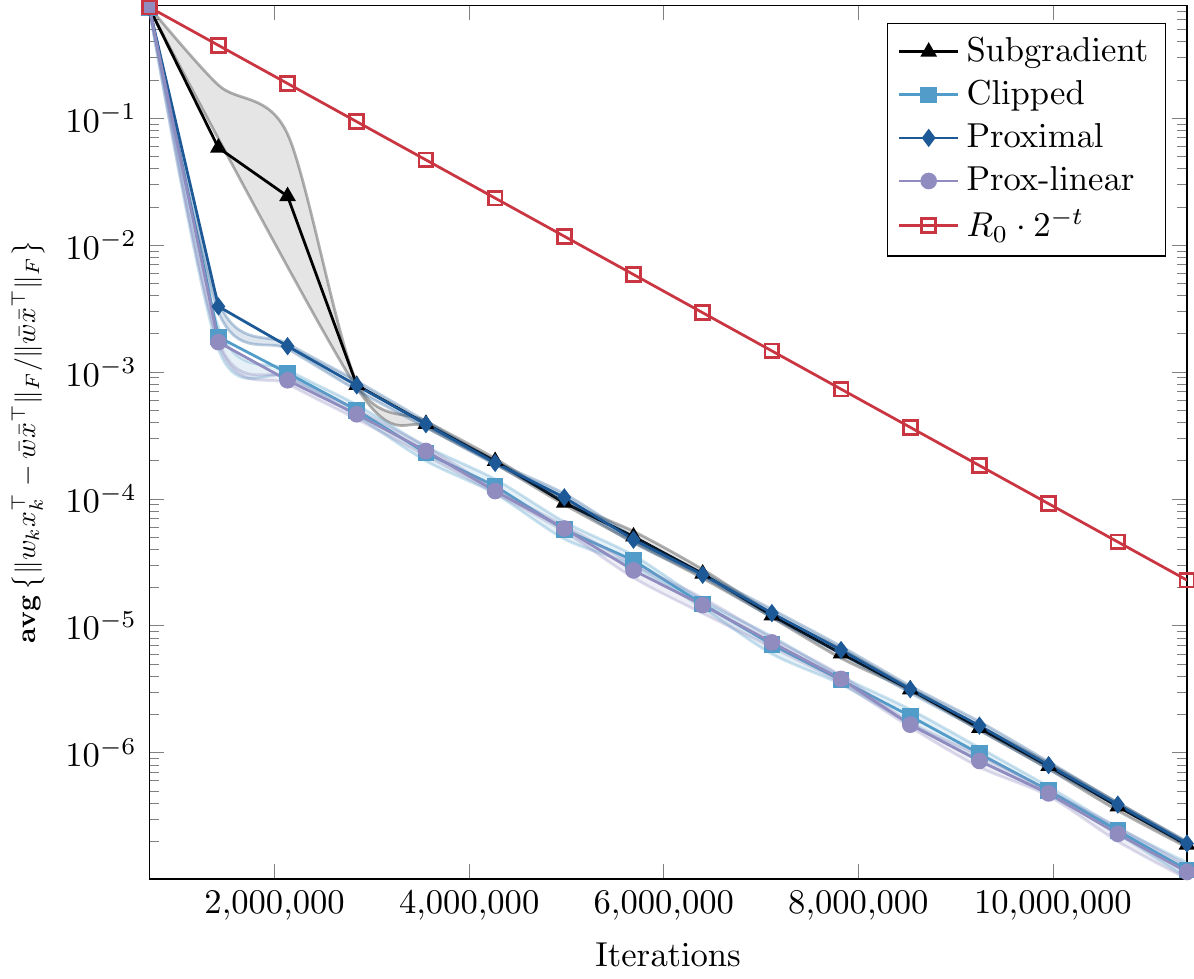}
	\end{minipage}
	\caption{Convergence behavior for $d = 100$ with streaming data.  Phase Retrieval (left column), Blind Deconvolution (right column),   $\pfail = 0.0$ (top row), $\pfail =0.2$ (bottom row). Average over $10$ runs.}
	\label{fig:conv-pfail-2}
\end{figure}

\subsection{Sensitivity to step size}
We next explore how Algorithm~\ref{alg:stoc_prox_outer} performs when $\alpha_0$ is misspecified. Throughout, we scale $\alpha_0$ by $\lambda := 2^p$ for integers $p $ between $-10$ and $10$. We run $25$ trials of the
algorithm and for each model and scalar $\lambda$, we report two different
metrics:
\begin{itemize}
\item We report the sample mean and standard deviation of the
number of ``inner'' loop iterations, or samples, needed to reach accuracy $\varepsilon = 10^{-5}$. We use the parameters of Section~\ref{sec:convergencebehav} to cap the number of total iterations by
$$
\frac{16}{(2-\gamma)^2} \left( \frac{ \lipsymb}{\delta_2 \mu}\right)^2 \left\lceil\log_2 \left(\frac{R_0}{\varepsilon}\right)^3\right\rceil
$$
as Theorem~\ref{thm:weak_setting} prescribes. This number is depicted as a dotted line in the figure.
\item We report the sample mean and standard deviation of the distance of the
final iterate to the solution set.
\end{itemize}

Figures~\ref{fig:stepsize-pr-clean} and \ref{fig:stepsize-bd-clean} show the
results for phase retrieval and blind deconvolution problems with $d = 100$ and $\pfail \in \{0, 0.2\}$. In these plots, solid lines with markers show the
mean behavior over $25$ runs, while the transparent overlays show one sample standard deviation above and below the mean. The plots show that Algorithm~\ref{alg:stoc_prox_outer} continues to perform as predicted by Theorem~\ref{thm:weak_setting} even if $\alpha_0$ is misspecified by a few orders of magnitude.

The prox-linear, proximal point, and clipped models perform similarly in all plots. As
reported in~\cite{asi2019importance}, the prox-linear and clipped methods
produce the same iterates. The iterates produced by the stochastic proximal point method and stochastic prox-linear are not identical, but they are
practically indistinguishable. This is due to two factors: the proximal and prox-linear models agree up to an error that increases quadratically as we move from the basepoint, and the proximal subproblems force iterates to remain near the basepoint.  Running the proximal point method for a much larger stepsize produces different iterates than the prox-linear method, though then the method fails to converge within the specified level of accuracy.

%Note that there seems to be a  sharp transition between initial step sizes that do
%not result in convergence and those that yield good performance.

\begin{figure}[h!]
    \centering
    \begin{minipage}{0.48 \textwidth}
        \includegraphics[width=\linewidth]{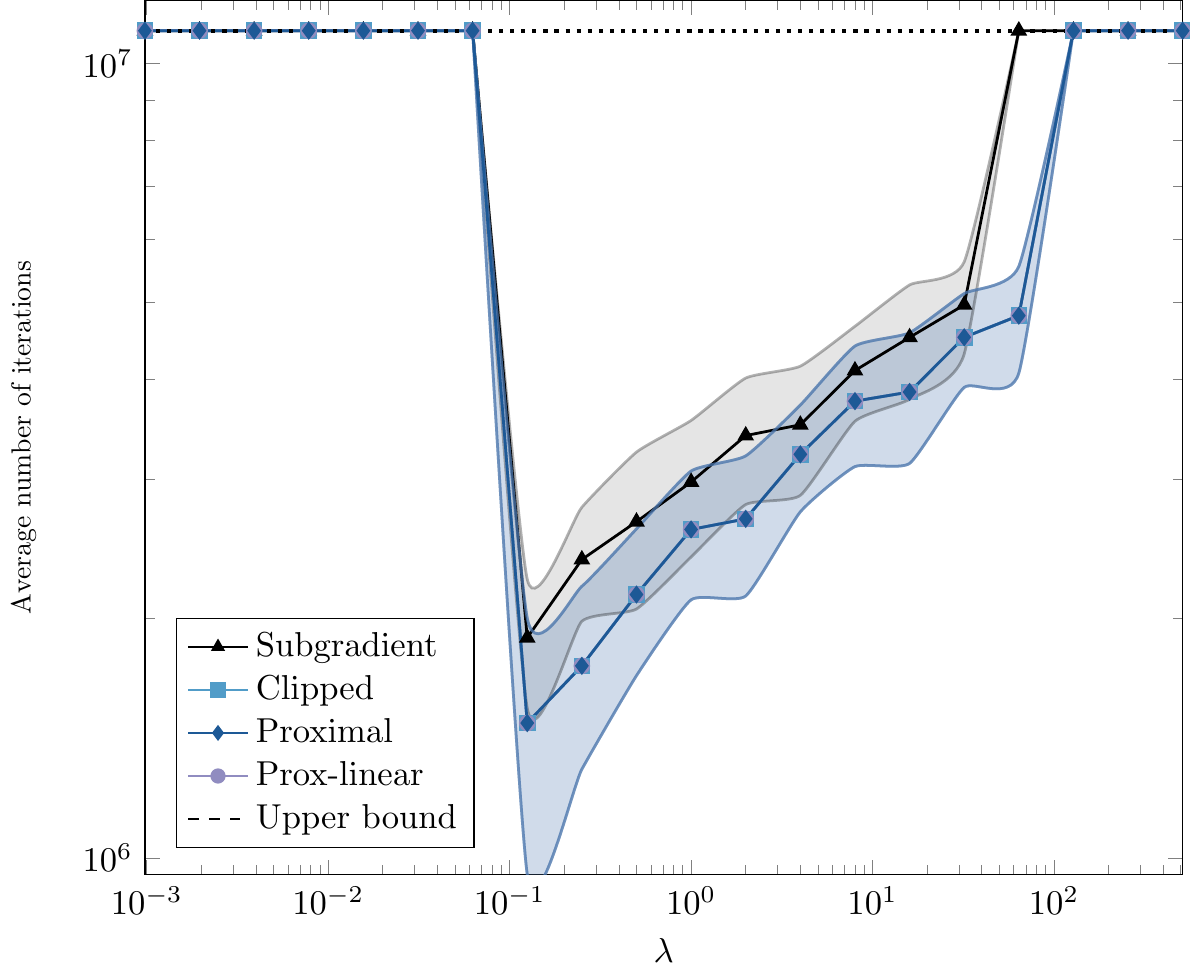}
	\end{minipage} \quad
	\begin{minipage}{0.48 \textwidth}
		\includegraphics[width=\linewidth]{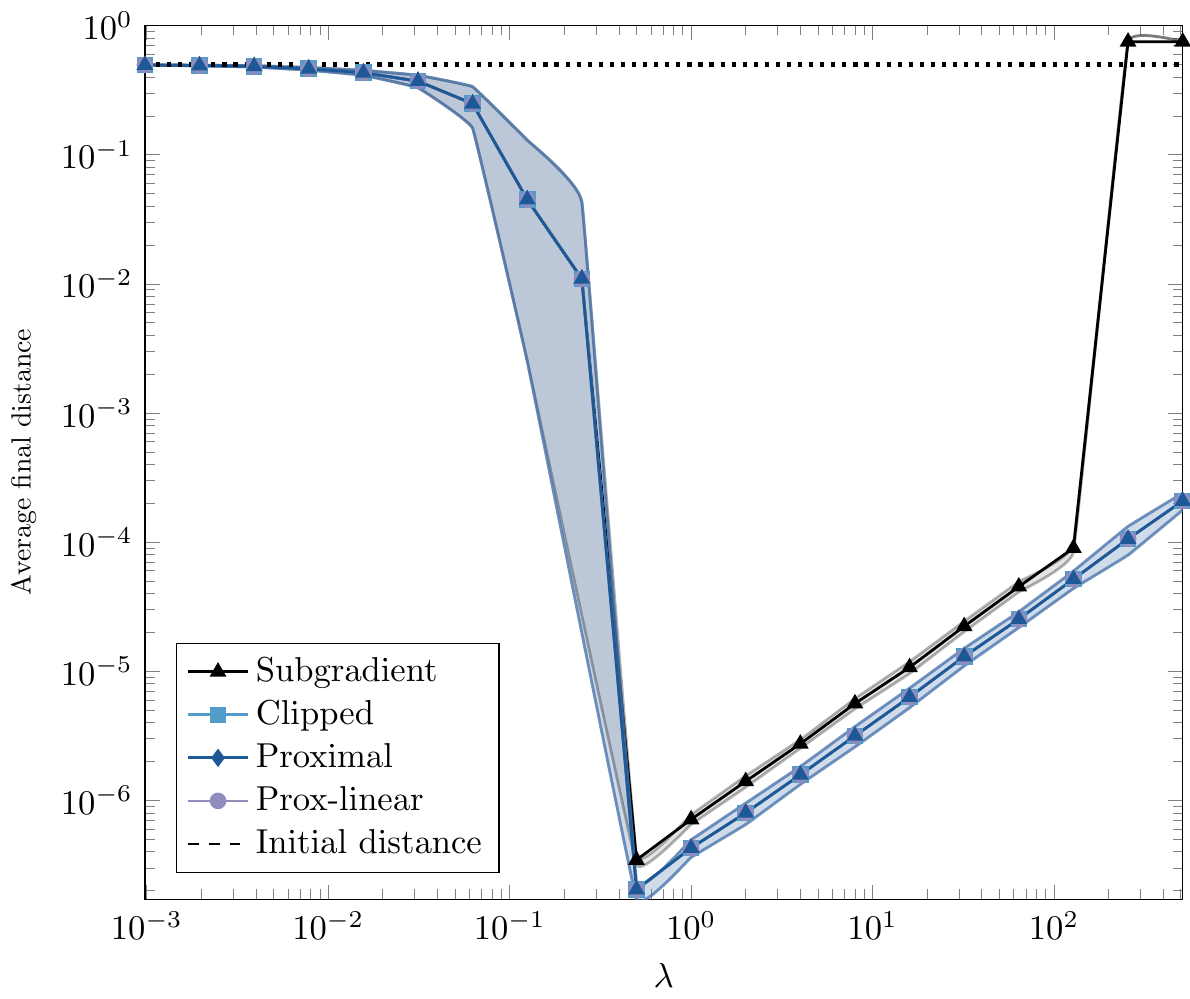}
    \end{minipage}
	\begin{minipage}{0.48 \textwidth}
		\includegraphics[width=\linewidth]{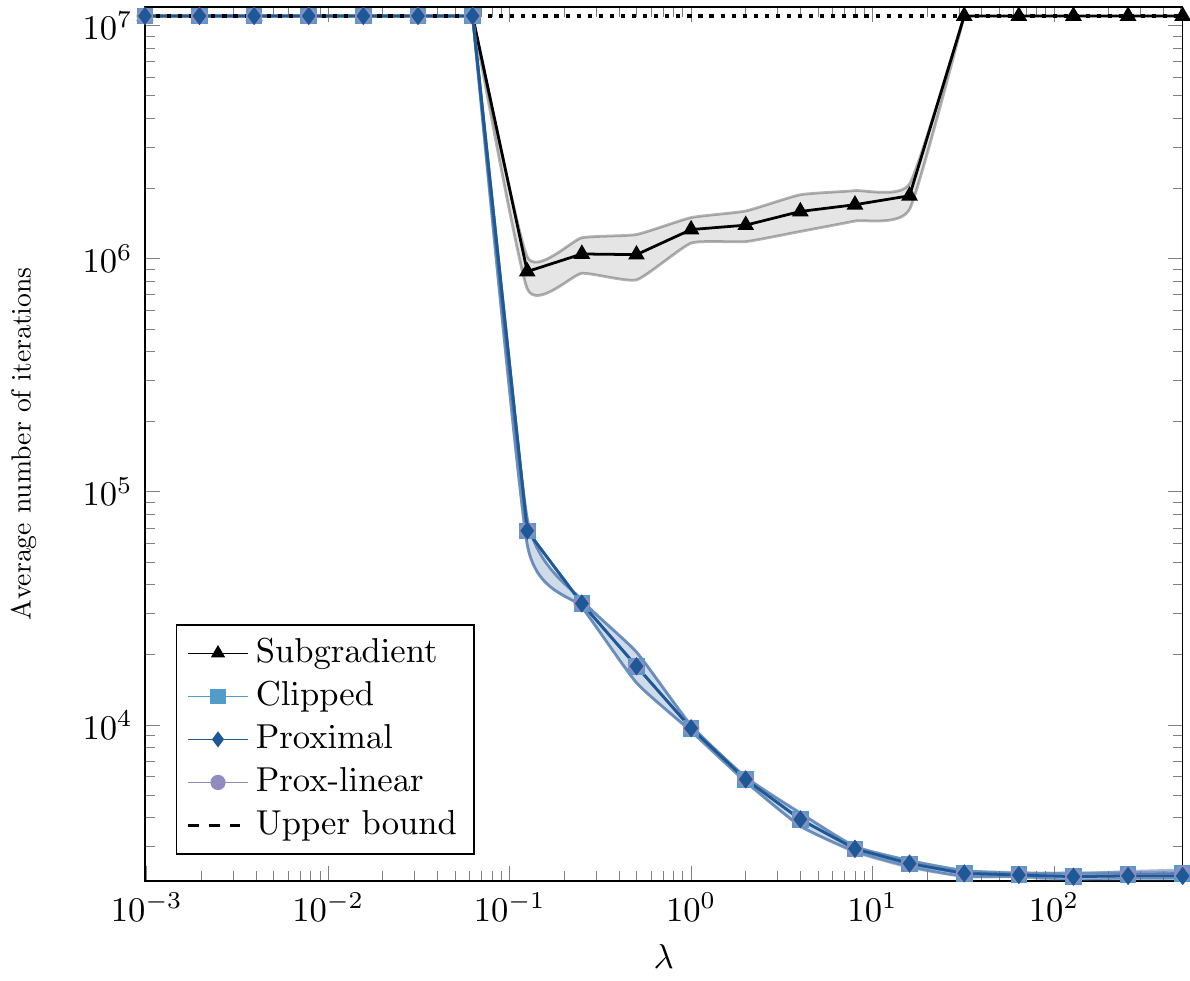}
	\end{minipage}
	\begin{minipage}{0.48 \textwidth}
		\includegraphics[width=\linewidth]{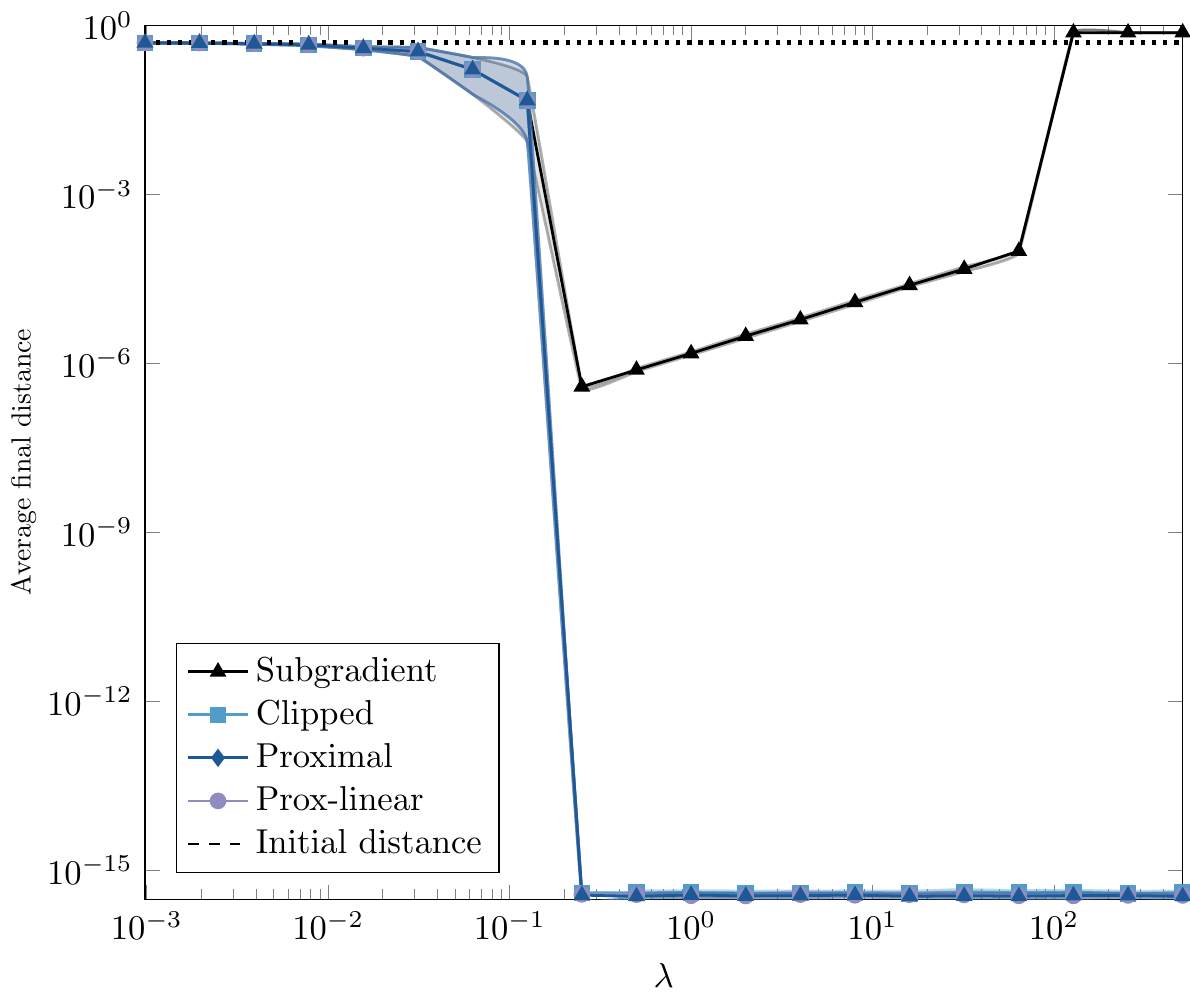}
	\end{minipage}
	\caption{Sensitivity to step size for the phase retrieval problem with $d = 100$, $\pfail
	= 0.2$ (top row), $\pfail
	= 0$ (bottom row). Left: average number of iterations to achieve distance $10^{-5}$.
Right: average final distance with a fixed computational budget.}
	\label{fig:stepsize-pr-clean}
\end{figure}

\begin{figure}[h!]
	\centering
	\begin{minipage}{0.48 \textwidth}
		\includegraphics[width=\linewidth]{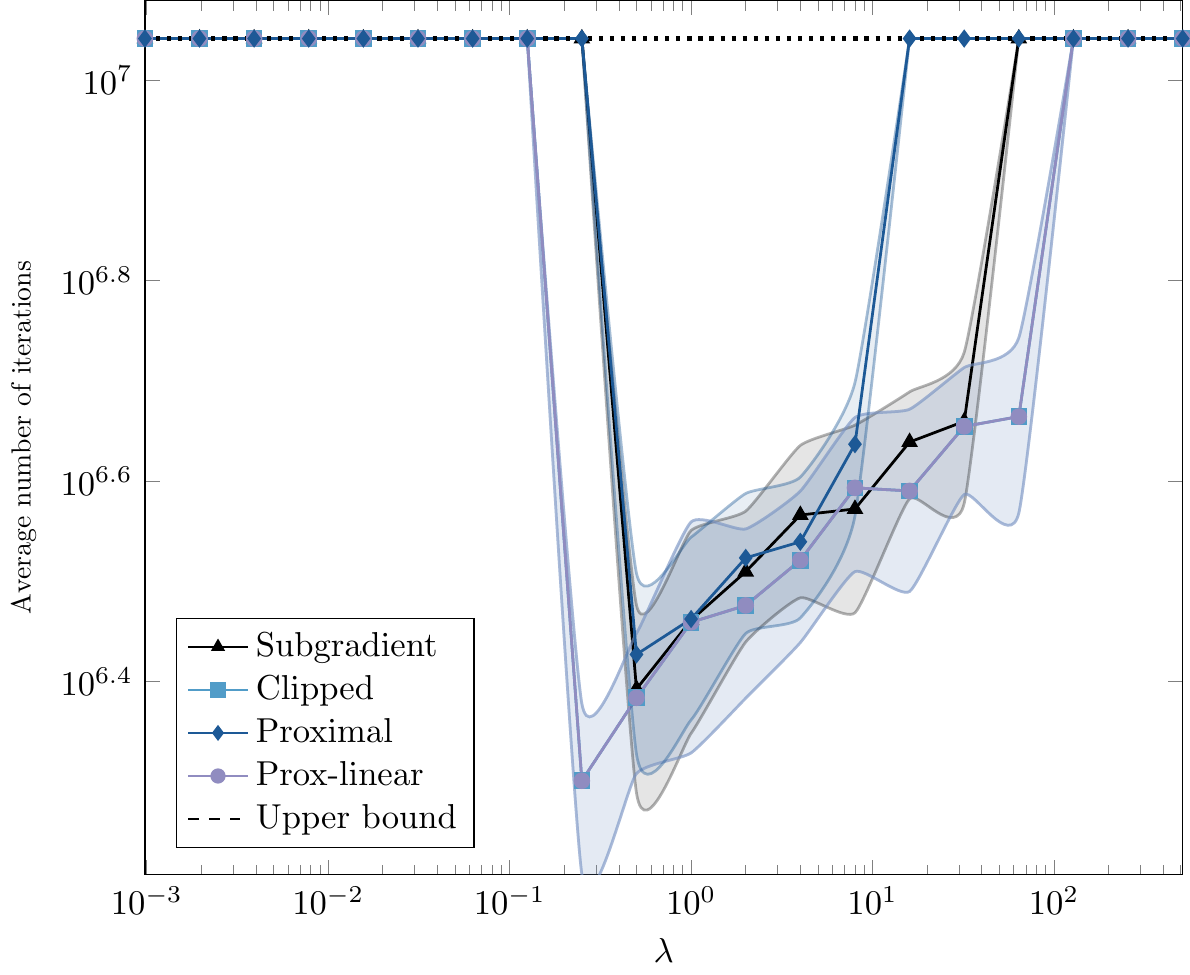}
	\end{minipage}
	\begin{minipage}{0.48 \textwidth}
		\includegraphics[width=\linewidth]{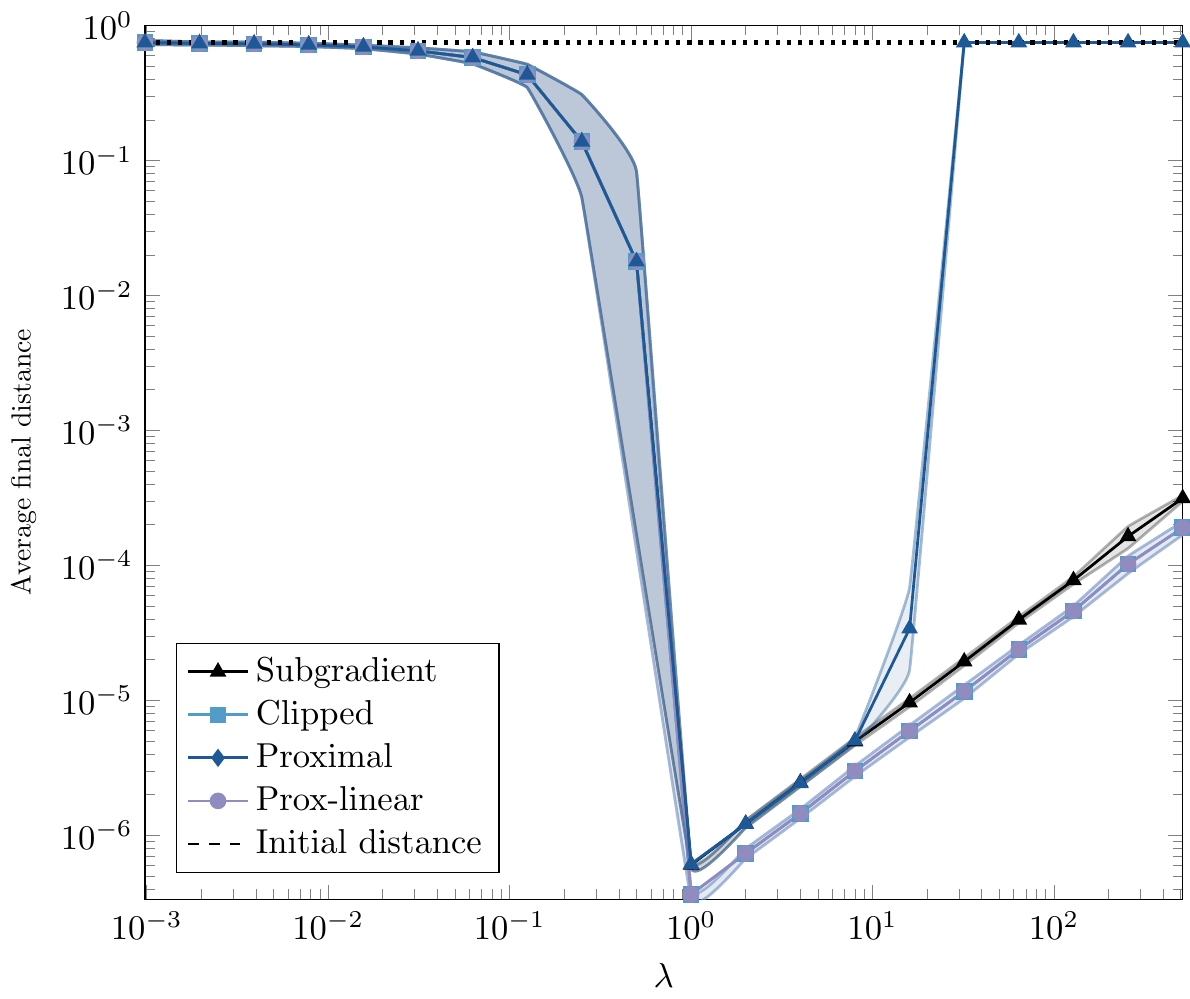}
	\end{minipage}
	\begin{minipage}{0.48 \textwidth}
		\includegraphics[width=\linewidth]{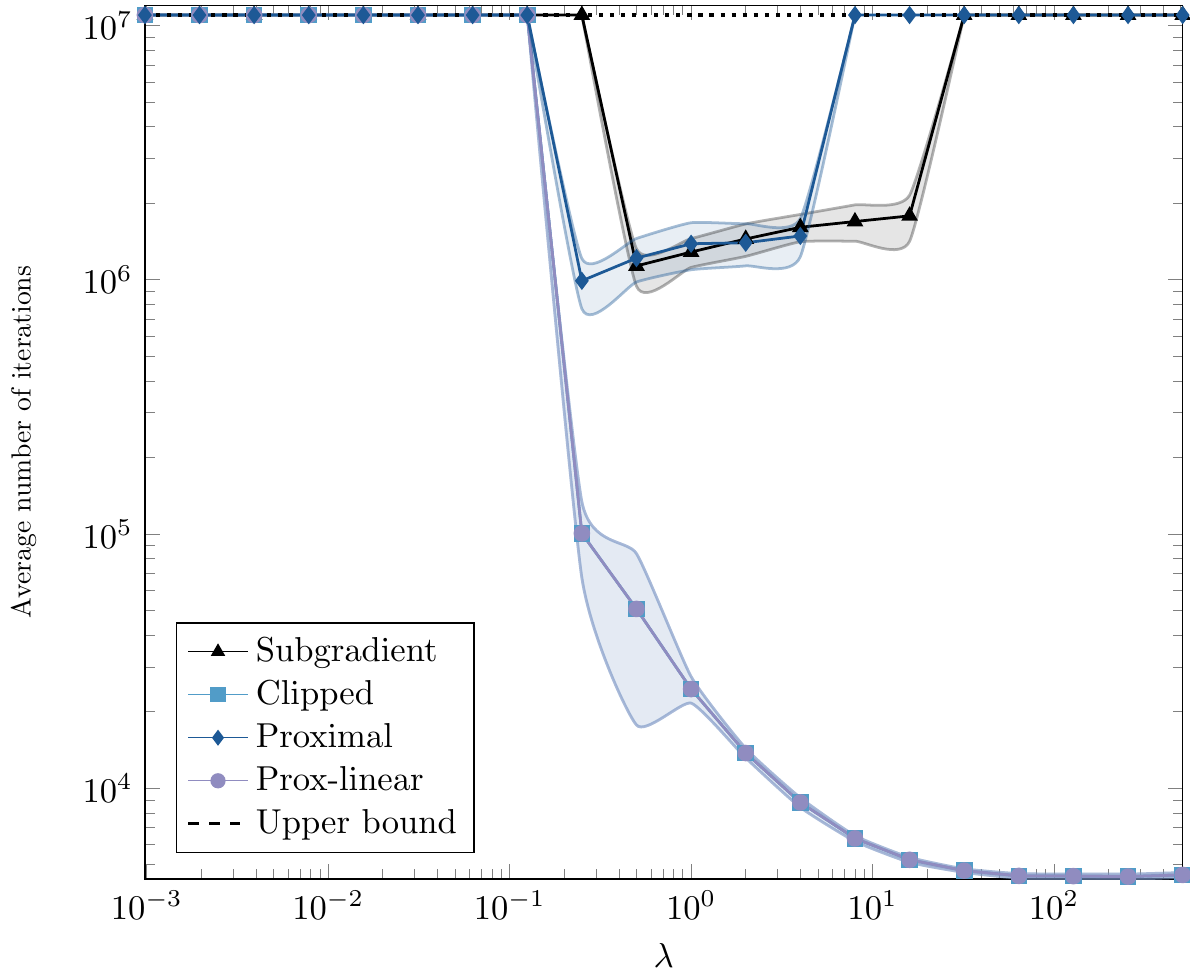}
	\end{minipage}
	\begin{minipage}{0.48 \textwidth}
		\includegraphics[width=\linewidth]{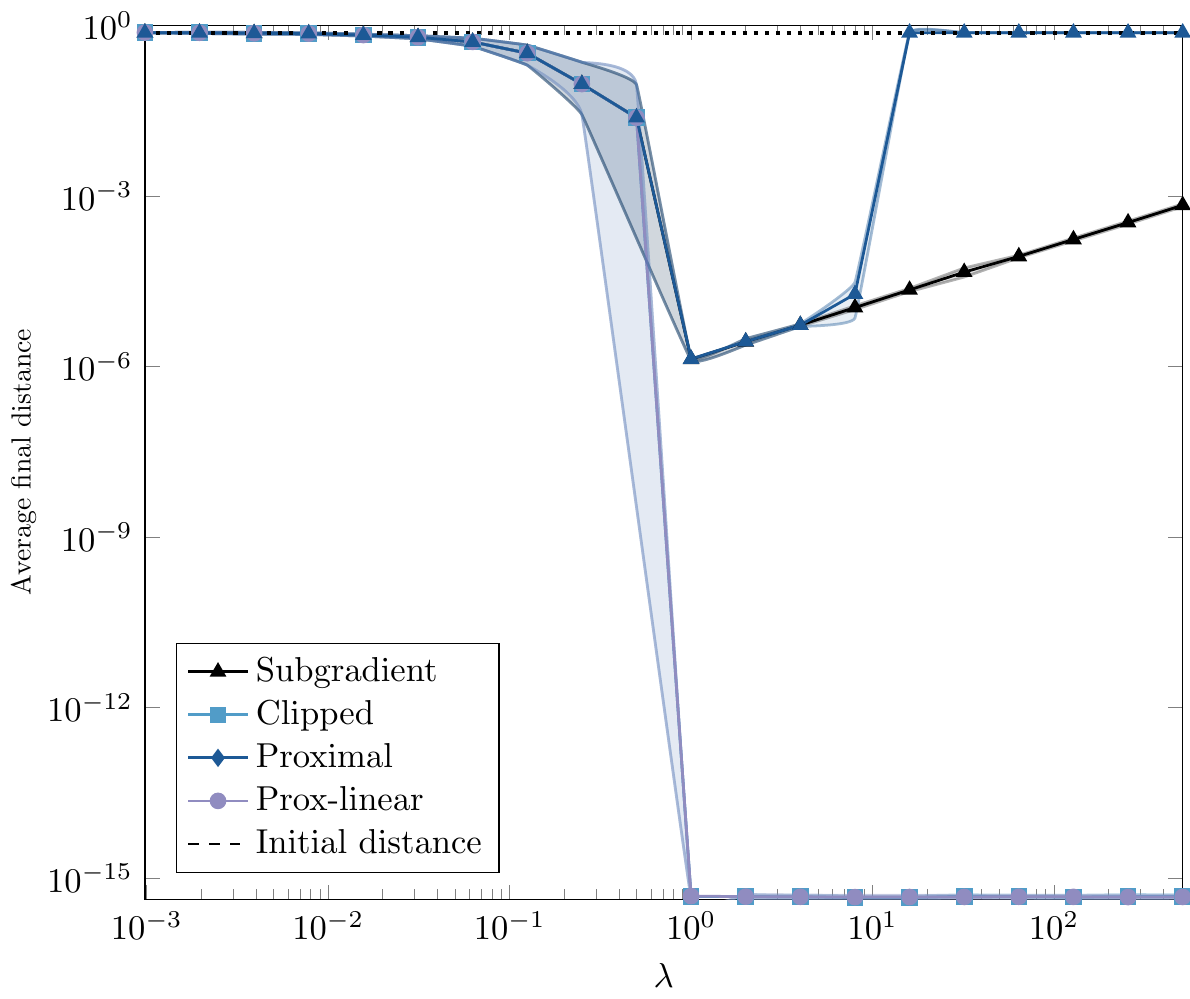}
	\end{minipage}
	\caption{Sensitivity to step size for the blind convolution problem with $d = 100$, $\pfail
	= 0.2$ (top row), $\pfail
	= 0$ (bottom row). Left: average number of iterations to achieve distance $10^{-5}$.
	Right: average final distance with a fixed computational budget.}
	\label{fig:stepsize-bd-clean}
\end{figure}

\subsection{Activity identification}
In this section, we demonstrate that Algorithm~\ref{alg:stoc_prox_outer} linearly converges to the active set of nonzero components in a sparse logistic regression problem. We model our experiment on~\cite[Section 6.2]{LeeWright12}. There, the authors find a sparse classifier for distinguishing between digits $6$ and $7$ from the MNIST dataset of handwritten digits~\cite{MNIST}. In this problem, we are given a set of $N = 12183$ samples $(x_i, y_i) \in \RR^d
\times \{-1, 1\}$, representing $28\times 28$ dimensional images of digits and their labels, and we seek a target vector $z := (w, b) \in \RR^{d+1}$, so that $w$ is sparse and $\sign(\ip{w, x_i} + b) = \sign(y_i)$ for most $i$. To find $z$, we minimize the function
$$\min_{z}~ \frac{1}{N}\sum_{i=1}^N f(z; i)+\tau\|w\|_1,$$
where each component is a logistic loss:
\begin{equation}
	f(z; i) \equiv f(w, b; i)
	:= \log\left(1 + \exp\left(-y_i(\ip{w, x_i} + b)\right) \right).
	\label{eq:logistic-loss}
\end{equation}
We let $(\tilde{w}, \tilde b)$ denote the minimizer of the logistic loss, which we find using the standard proximal gradient algorithm.  Given $\tilde{w}$, we denote its support set by $S:= \{ i \in [d] :
\abs{\tilde{w}_i} > \epsilon \}$, where $\epsilon$ accounts for the
numerical precision of the system.

Our goal is to converge linearly to the set
$$
\cS = \{ (w, b) \in \RR^{d+1} \mid w_{S^c} = 0\}.
$$
To that end, we will apply Algorithm~\ref{alg:stoc_prox_outer} with the stochastic proximal gradient model:
$$		f_{z}((y, a), i ) = f(z, i) + \ip{\nabla f(z, i), (y, a) - z}
		+ \tau \norm{y}_1.$$
Algorithm~\ref{alg:stoc_prox_outer} equipped with this model results in the
standard stochastic proximal gradient method. To apply the algorithm, we set
parameters using Theorem~\ref{thm:weak_setting}. We set $\gamma = 1$,
$\delta_2 = 1/\sqrt{10}$, $\varepsilon = 10^{-5}$.
We initialize $w_0 = 0, b_0 = 0$ and set
set $R_0 = \|(\tilde w, \tilde b) - (w_0, b_0)\|$. We estimate $\lipsymb$ by the formula
$
	\mathsf{L} := \sqrt{\frac{1}{m} \sum_{i=1}^m \norm{x_i}_2^2}.
$
Finally, we estimate $\mu$ by grid search over $(\tau, p)$ using the formula:
$\mu = \tau \cdot \sqrt{d} \cdot 2^{-p}$.

\subsubsection{Evaluation}
We compare the performance of Algorithm~\ref{alg:stoc_prox_outer} with the
Regularized Dual Averaging method (RDA)~\cite{nesterov2009primal,xiao2010dual}, which was shown to have favorable manifold identification properties in~\cite{LeeWright12}.
In our setting, the latter method solves the following subproblem:
\begin{align}
	(w_{t+1}, b_{t+1}) \in \argmin_{w, b} \left\{
		\left\langle \bar{g}_t, \begin{pmatrix} w \\ b \end{pmatrix}
		\right\rangle + \tau \norm{w}_1
		+ \frac{\gamma}{2 \sqrt{t}} \norm{\begin{pmatrix} w \\ b
		\end{pmatrix}}_2^2
	\right\},
	\label{eq:rda-update}
\end{align}
where $\bar{g}_t$ in~\eqref{eq:rda-update} is the running average over all
stochastic gradients $g_k := \nabla f(z; i_k)$ sampled up to step $t$, and
$\gamma$ is a tunable parameter which is again determined by a simple grid
search. Following the discussion in~\cite{LeeWright12}, RDA is initialized
at $(w_0, b_0) = 0$; therefore we choose the same initial point $(w_0, b_0)$ for both
methods.
In addition to RDA, we also performed a comparison with the standard stochastic proximal
gradient method, equipped with a range of polynomially decaying step sizes of the form
\[
	\lambda_k := c k^{-p}, \; p \in \{1/2, 2/3, 3/4, 1\}.
\]
We found that the stochastic proximal gradient method performed comparably with RDA in all metrics, and therefore chose to omit it below.

The convergence plots in Figure~\ref{fig:mnist-rmba-vs-rda} confirm that the iterates
of Algorithm~\ref{alg:stoc_prox_outer} converge to the set $\cS$ at a linear rate,
while the function values converge at a sublinear rate. In contrast, the
iterates generated by RDA converge sublinearly in both metrics.

% TODO: Add convergence plots for extended RDA experiment
\begin{figure}[ht]
	\centering
	\begin{minipage}{0.5\textwidth}
		\includegraphics[width=\linewidth]{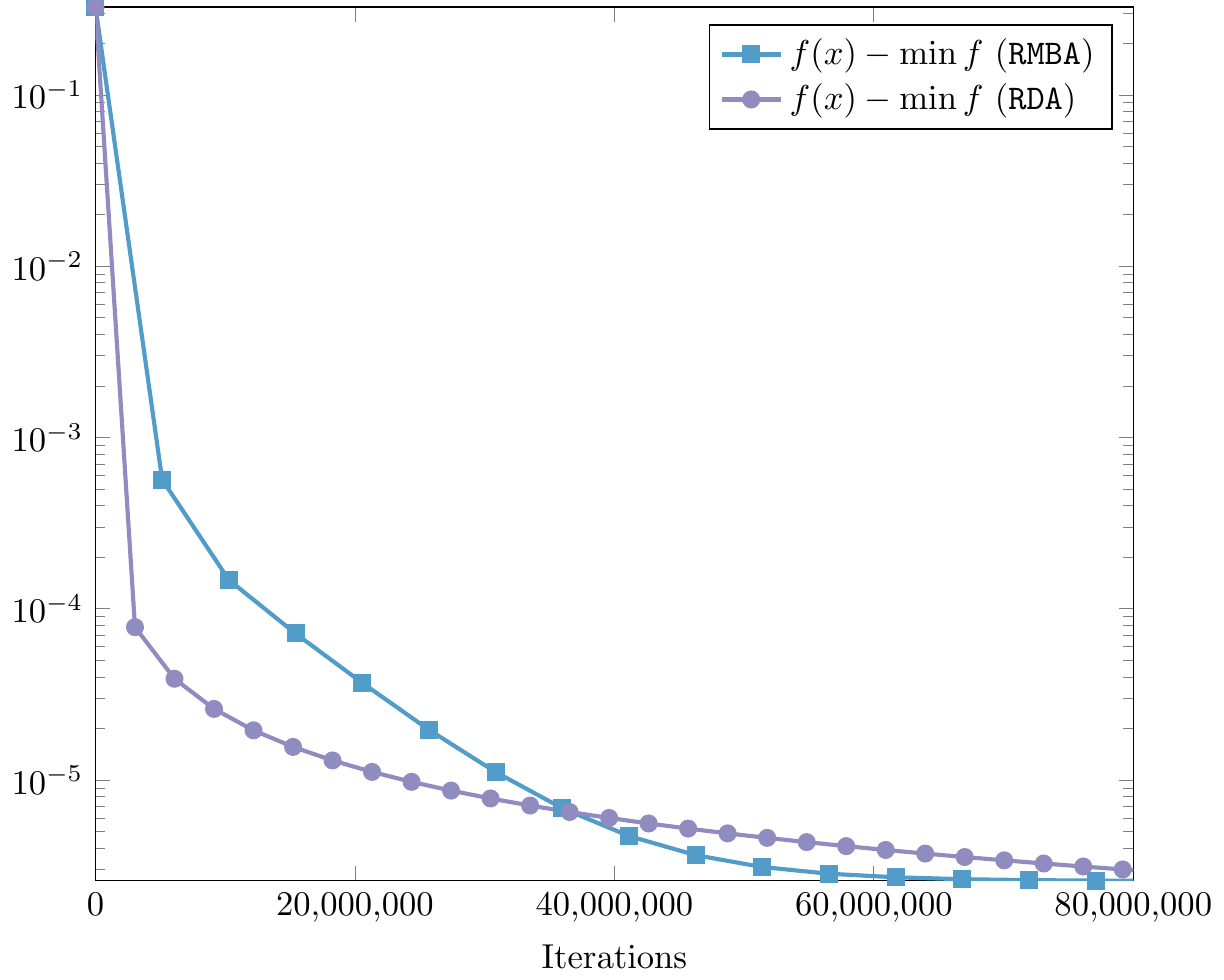}
	\end{minipage}~
	\begin{minipage}{0.5\textwidth}
		\includegraphics[width=\linewidth]{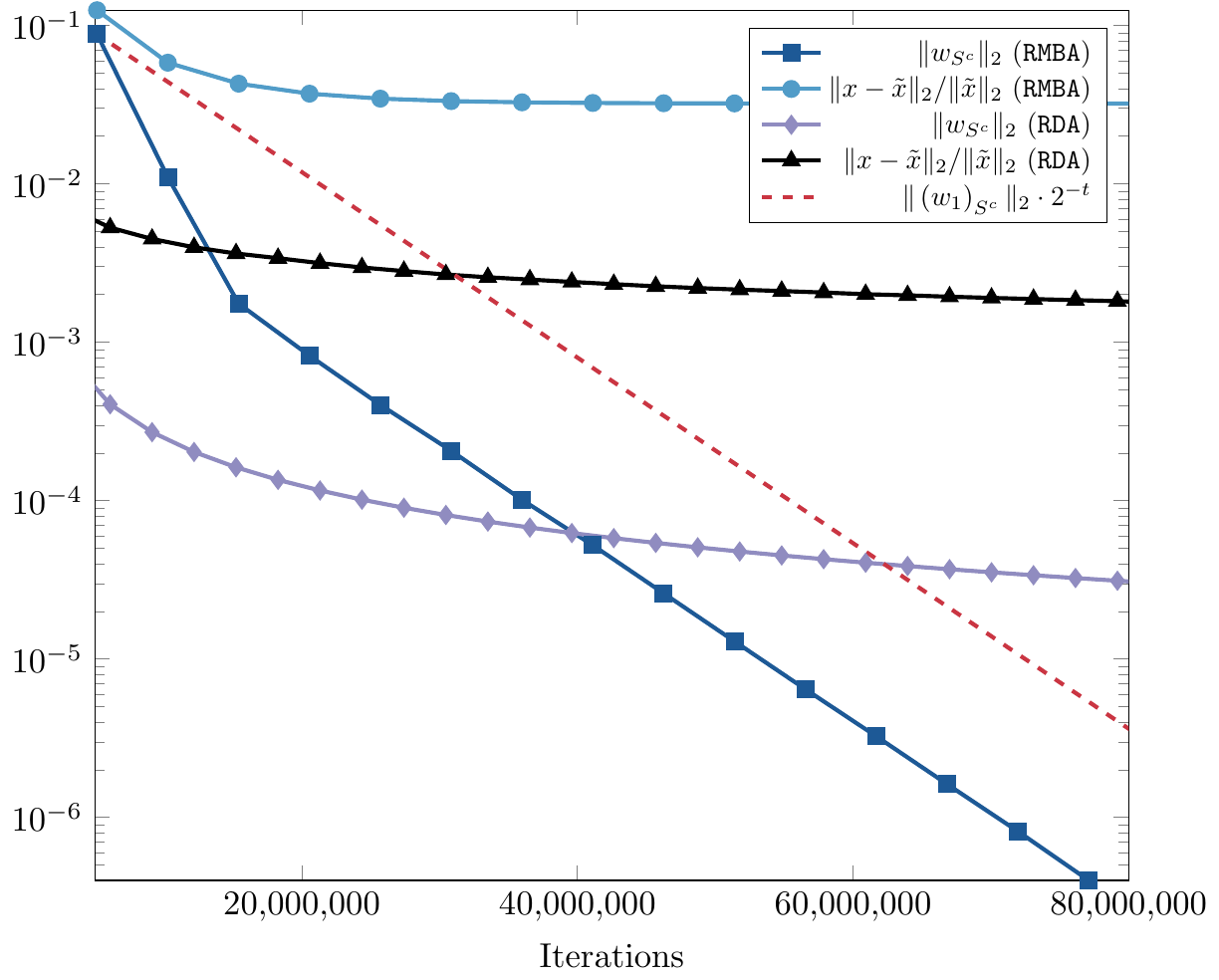}
	\end{minipage}
	\caption{Performance of RMBA vs. RDA on the sparse logistic regression
		problem. Left: function value gap. Right: distance to the support set
		and the solution found by the proximal gradient method.}
	\label{fig:mnist-rmba-vs-rda}
\end{figure}

			\bibliographystyle{plain}
	\bibliography{bibliography}
	
	\appendix
	%%\input{appendix}
	%%%%%%%%%
%%%%%%%%%
%%Appendix

\section{Proofs from Section~\ref{sec:weaklyconvex}}

We will need the following elementary lemma.
\begin{lem}\label{lem:lipschitz}
	Suppose Assumptions~\ref{assumption:convex} and~\ref{assumption:lipschitz} hold.
	Fix an arbitrary $\gamma \in (0, 2)$ and consider two points $y \in \cT_\gamma$ and  $x \in \RR^d$. Then the estimate holds:
	$$
	f_y(y, \data) \leq f_y(x, \data) + L(y, \data) \|x - y\| .
	$$
\end{lem}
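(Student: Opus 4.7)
The plan is to combine the convexity of $f_y(\cdot,z)$ from Assumption~\ref{assumption:convex} with the subgradient norm bound from Assumption~\ref{assumption:lipschitz}. Since $\gamma \in (0,2)$ we have $\cT_\gamma \subseteq \cT_2$, so the hypothesis $y \in \cT_\gamma$ guarantees that the bound $\min_{v\in \partial f_y(y,\data)} \|v\| \leq L(y,\data)$ is available at the basepoint $y$.

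First, I would select a minimum-norm subgradient $v \in \partial f_y(y,\data)$; by Assumption~\ref{assumption:lipschitz} it satisfies $\|v\| \leq L(y,\data)$. Next, applying convexity of $f_y(\cdot,\data)$ at the point $y$ (which is where the subgradient lives) evaluated at $x$, we obtain
\[
f_y(x,\data) \geq f_y(y,\data) + \langle v, x-y\rangle.
\]
Rearranging and using Cauchy--Schwarz yields
\[
f_y(y,\data) \leq f_y(x,\data) + \langle v, y-x\rangle \leq f_y(x,\data) + \|v\|\,\|x-y\|,
\]
and substituting the Lipschitz bound $\|v\|\leq L(y,\data)$ completes the proof.

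There is no real obstacle here; the only subtle point is being careful that Assumption~\ref{assumption:lipschitz} only provides the subgradient bound inside $\cT_2$, which is why the restriction $\gamma < 2$ is needed. Once this is noted, the lemma is an immediate one-line consequence of convexity plus Cauchy--Schwarz.
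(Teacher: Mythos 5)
Your proof is correct and follows exactly the paper's argument: take a minimal-norm subgradient $v\in\partial f_y(y,\data)$, apply the subgradient inequality from convexity, then Cauchy--Schwarz and the bound $\|v\|\le L(y,\data)$ available since $\cT_\gamma\subseteq\cT_2$. Nothing further is needed.
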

\begin{proof}

	Let $v \in \partial f_y(y, \data)$  be the minimal norm subgradient.  Then, by definition
	$
	f_y(y, \data) \leq f_y(x, \data) + \dotp{v, y-x} % \leq f_y(x, \data) + L(y, \data) \|x - y\|.
	$
	Applying~\eqref{eqn:lip_mod} and Cauchy-Schwarz completes the proof.
\end{proof}

\subsection{Proof of Lemma~\ref{lem:key_lem}}\label{sec:proof_key_lem}
Throughout the proof, we suppose that Assumptions~\allasumption hold. We let $\cF_\inneridx$ denote the $\sigma$-algebra generated by the history of the algorithm up to iteration $\inneridx$ and define the shorthand for conditional expectation $\EE_k\left[ \cdot \right] := \EE\left[ \cdot \mid \cF_k\right]$.
	Define the stopping time
	$$
	\tau := \min\{ \inneridx\geq 0 \mid y_\inneridx \notin \cT_{\gamma}\},
	$$
	and the sequence of events
	$$
	B : = \{ y_0 \in \cT_{\gamma\sqrt{\delta} } \} \qquad \text{ and } \qquad A_{\inneridx} := \{ \tau > \inneridx\} \cap B.
	$$
	Define also for all indices $k$, the quantity:
		$$
		D_\inneridx := \dist(y_\inneridx, \cS).
		$$
Recall that our goal is to lower bound $\PP(D_{K^*}\leq \varepsilon)$. To this end, we successively compute
\begin{align}
\PP\left(D_{\INNERIDX^\ast} \leq \varepsilon \right) &\geq\PP\left(D_{\INNERIDX^\ast} \leq \varepsilon \text{ and }  A_\INNERIDX \right)\notag\\
&=\PP\left(D_{\INNERIDX^\ast} \leq \varepsilon \mid A_\INNERIDX\right) \PP(A_\INNERIDX)\notag\\
&=\left(1-\PP\left(D_{\INNERIDX^\ast} \geq \varepsilon \mid A_\INNERIDX\right)\right)\PP(A_\INNERIDX)\notag\\
&\geq\left(1-\frac{\EE\left[D_{\INNERIDX^\ast} \mid A_\INNERIDX \right]}{\varepsilon}\right)\PP(A_\INNERIDX)\label{eqn:mark_append}\\
&=\PP(A_K)-\frac{\EE\left[D_{\INNERIDX^\ast} 1_{A_\INNERIDX} \right]}{\varepsilon}\notag%\\
 %&\geq \PP(B) - \initqual -  \left(\frac{\quadapprox}{\gamma\mu}\right)^2\INNERIDX \alpha^2\lipsymb^2  - \frac{1}{\varepsilon } \cdot  \frac{ \initqual\left(\frac{\gamma\mu}{\quadapprox }\right)^2 + (\INNERIDX+1)\alpha^2\lipsymb^2}{(2-\gamma) (\INNERIDX+1)\mu \alpha}.
 ,
\end{align}
where \eqref{eqn:mark_append} follows from Markov's inequality. The result will now follow immediately from the following two propositions, which establish an upper bound on $\EE\left[D_{\INNERIDX^\ast} 1_{A_\INNERIDX} \right]$ and a lower bound on $\PP(A_K)$, respectively.
		We note that the first proposition is a quick modification of~\cite[Lemma 4.2]{davis2019stochastic}. We include a proof for completeness.
	\begin{proposition}\label{claim:exp_1}
The following bounds hold:
	\begin{align}
	\EE_\inneridx\left[D_{k+1}^21_{A_{\inneridx}}   \right]&\leq
	D_{\inneridx}^21_{A_{\inneridx}}  + \lipsymb^2\alpha^2 -(2-\gamma)\mu\alpha D_{\inneridx}1_{A_{\inneridx}}, \label{eq:descent_ak}	\\
 \EE\left[D_{\INNERIDX^\ast}1_{A_{\INNERIDX}}\right]  &\leq \frac{ \delta\left(\frac{\gamma\mu}{\quadapprox}\right)^2 + (\INNERIDX+1)\alpha^2\lipsymb^2}{(2-\gamma)\mu(\INNERIDX+1) \alpha}.\label{eqn:prop_1recur}
\end{align}
	\end{proposition}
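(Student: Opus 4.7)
My plan is to prove the two bounds in sequence, the first being a one-step descent recursion and the second an aggregate bound obtained by telescoping.

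For inequality~\eqref{eq:descent_ak}, I would let $\bar y_k \in \argmin_{w \in \proj_\cS(y_k)} f(w)$ and observe that on $A_k$ we have $y_k \in \cT_\gamma$, so Assumption~\ref{assumption:lipschitz} gives the second-moment bound $\EE_k[L(y_k, z_k)^2] \leq \lipsymb^2$ and Lemma~\ref{lem:lipschitz} is applicable. Because $y_{k+1}$ minimizes a $1/\alpha$-strongly-convex function (Assumption~\ref{assumption:convex}), the standard prox inequality yields, for any feasible $w$,
$$\|y_{k+1}-w\|^2 \leq \|y_k-w\|^2 + 2\alpha(f_{y_k}(w,z_k)-f_{y_k}(y_{k+1},z_k)) - \|y_{k+1}-y_k\|^2.$$
Taking $w=\bar y_k$ and splitting $f_{y_k}(\bar y_k,z_k)-f_{y_k}(y_{k+1},z_k)=(f_{y_k}(\bar y_k,z_k)-f_{y_k}(y_k,z_k))+(f_{y_k}(y_k,z_k)-f_{y_k}(y_{k+1},z_k))$, I would bound the second difference by $L(y_k,z_k)\|y_{k+1}-y_k\|$ via Lemma~\ref{lem:lipschitz} and then absorb it through Young's inequality $2\alpha L\|y_{k+1}-y_k\|-\|y_{k+1}-y_k\|^2 \leq \alpha^2 L^2$. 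Taking $\EE_k[\cdot]$ of the resulting bound, the one-sided accuracy Assumption~\ref{assumption:oneside} turns $\EE_k[f_{y_k}(\bar y_k,z_k)-f_{y_k}(y_k,z_k)]$ into $f(\bar y_k)-f(y_k)+\tfrac{\eta}{2}D_k^2$, and sharpness~\ref{assumption:sharp} bounds this above by $-\mu D_k+\tfrac{\eta}{2}D_k^2$. On $A_k$ we have $D_k\leq \gamma\mu/\eta$, so $\tfrac{\eta}{2}D_k^2 \leq \tfrac{\gamma}{2}\mu D_k$, giving a coefficient $-(2-\gamma)\mu/2$ on $D_k$. Multiplying the resulting bound by $1_{A_k}$ (which is $\cF_k$-measurable) and using $D_{k+1}^2\leq \|y_{k+1}-\bar y_k\|^2$ produces~\eqref{eq:descent_ak}.

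For inequality~\eqref{eqn:prop_1recur}, I would take unconditional expectations and use the nesting $A_{k+1}\subseteq A_k$ (because $\tau>k+1$ implies $\tau>k$), which gives $\EE[D_{k+1}^2 1_{A_{k+1}}] \leq \EE[D_{k+1}^2 1_{A_k}]$. Rearranging~\eqref{eq:descent_ak} then yields
$$(2-\gamma)\mu\alpha\,\EE[D_k 1_{A_k}] \leq \EE[D_k^2 1_{A_k}]-\EE[D_{k+1}^2 1_{A_{k+1}}]+\alpha^2\lipsymb^2.$$
Summing over $k=0,\dots,K$ telescopes the first two terms, leaving only $\EE[D_0^2 1_{A_0}]$. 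Since $A_0=B$ and on $B$ we have $D_0\leq \gamma\sqrt{\delta}\mu/\eta$, this initial term is bounded by $\delta(\gamma\mu/\eta)^2$. Dividing by $(2-\gamma)\mu\alpha(K+1)$ and using $1_{A_K}\leq 1_{A_k}$ for $k\leq K$ (so that $\EE[D_{K^\ast} 1_{A_K}]=\tfrac{1}{K+1}\sum_{k=0}^K \EE[D_k 1_{A_K}]\leq \tfrac{1}{K+1}\sum_{k=0}^K \EE[D_k 1_{A_k}]$) delivers~\eqref{eqn:prop_1recur}.

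The main technical obstacle is the careful bookkeeping with the stopping time $\tau$: the Lipschitz bound in Assumption~\ref{assumption:lipschitz} only holds on $\cT_2$, and sharpness is only useful when $\tfrac{\eta}{2}D_k^2$ can be dominated by $\mu D_k$, which requires $D_k\leq \gamma\mu/\eta$. Multiplying every step by $1_{A_k}$ ensures both conditions are in force exactly when the estimates are invoked, while the monotonicity $1_{A_{k+1}}\leq 1_{A_k}$ is precisely what allows the telescoping to proceed without losing control of the indicator. Apart from this, the argument is essentially a stopped-process variant of the standard descent-plus-telescoping analysis for stochastic model-based methods.
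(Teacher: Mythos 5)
Your proposal is correct and follows essentially the same route as the paper's proof: the strong-convexity (prox) inequality for the regularized model subproblem, the Lipschitz bound from Lemma~\ref{lem:lipschitz}, one-sided accuracy plus sharpness to produce the $-\mu D_k + \tfrac{\eta}{2}D_k^2$ term, absorption of the quadratic via $D_k \le \gamma\mu/\eta$ on $A_k$, and telescoping with $A_{k+1}\subseteq A_k$. The only cosmetic differences are that you absorb the cross term pathwise via Young's inequality where the paper uses Cauchy--Schwarz in expectation and then maximizes over $c$, and you bound $\EE[D_{K^\ast}1_{A_K}]$ directly by averaging rather than passing through $\EE[D_{K^\ast}1_{A_{K^\ast}}]$; both yield identical constants.
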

\begin{proof}
	The loss function $y\mapsto f_{y_\inneridx}(y, \data_\inneridx)+\frac{1}{2\alpha}\|y-y_\inneridx\|^2$ is strongly convex on $\cX$ with constant $1/\alpha$ and $y_{\inneridx+1}$ is its minimizer. Hence for any $y\in \cX$, the inequality holds:
	\begin{align*}
	\left(f_{y_\inneridx}(y, \data_\inneridx)+ \tfrac{1}{2\alpha}\|y-y_\inneridx\|^2\right) \geq \Big(&f_{y_\inneridx}( y_{\inneridx+1}, \data_\inneridx) +\tfrac{1}{2\alpha} \|y_{\inneridx+1}-y_\inneridx\|^2\Big) + \tfrac{1}{2\alpha}\|y - y_{\inneridx+1}\|^2.
	\end{align*}
	Rearranging and taking expectations, we successively deduce that provided $y_\inneridx \in \cT_\gamma$, we have
	\begin{align}
	&\frac{1}{2\alpha}\cdot\EE_\inneridx\left[\|y - y_{\inneridx+1}\|^2 + \|y_{\inneridx+1}-y_\inneridx\|^2 - \|y - y_{\inneridx}\|^2\right] \notag\\
	&\leq \EE_\inneridx\left[ f_{y_\inneridx}(y, \data_\inneridx)  - f_{y_\inneridx}( y_{\inneridx+1}, \data_\inneridx) \right]\notag\\
	&\leq \EE_\inneridx\left[ f_{y_\inneridx}(y, \data_\inneridx) -  f_{y_\inneridx}( y_{\inneridx}, \data_\inneridx) +  L(y_k, \data)\|y_{\inneridx+1}-y_\inneridx\|\right] \label{eqn:lip_modelcvx0}\\
	&\leq f(y) -  f( y_{\inneridx}) +  \frac{\quadapprox}{2}\|y - y_{\inneridx}\|^2 + \sup_{w \in \cT_2}\sqrt{\EE_{\data}[L(w, \data)^2]}\cdot\sqrt{\EE_\inneridx[\|y_{\inneridx+1}-y_\inneridx\|^2]},\label{eqn:CS0} %\notag\\
\end{align}
where \eqref{eqn:lip_modelcvx0} follows from Lemma~\ref{lem:lipschitz} while inequality \eqref{eqn:CS0} follows from Cauchy-Schwarz and Assumption~\ref{assumption:oneside}.

	Define $c:=\sqrt{\EE_\inneridx[\| y_{\inneridx+1}-y_\inneridx\|^2]}$ and notice $c\geq \EE_\inneridx\|y_\inneridx - y_{\inneridx+1}\|$. Then, rearranging \eqref{eqn:CS0}, we immediately deduce that if $y_k \in \cT_\gamma$, we have
%	\begin{align}
%&\EE_t\left[\frac{\beta_t - \eta}{2}\| x^\ast - x_{t+1}\|^2 +\frac{\beta_t}{2} \|x_{t+1}-x_t\|^2 - \frac{\beta_t}{2}\|x^\ast- x_{t}\|^2\right] \notag\\
%&\leq \EE_t[r(x^\ast)+f(x^\ast)-r(x_{t+1})-f(x_{t+1})]+\frac{\tau}{2}\|x^\ast-x_t\|^2\label{eqn:apply_mod_exp2cvx3}\\
%	&\quad+\lipsymb\EE_t[\|x_{t+1}-x_t\|]+\lipsymb\sqrt{\EE_t[\|x_{t+1}-x_t\|^2]},
%	\end{align}
%	In convex setting $\tau = 0$. Therefore, we find that
	\begin{align*}
	  \frac{1}{2\alpha}\EE_\inneridx\left[\inf_{y \in \proj_{\cS}(y_\inneridx)}\| y - y_{\inneridx+1}\|^2  \right] &\leq  \frac{1}{2\alpha}\inf_{y \in \proj_{\cS}(y_\inneridx)}\EE_\inneridx\left[\| y - y_{\inneridx+1}\|^2  \right] \\
	  &\leq \frac{\alpha^{-1}+\quadapprox}{2}D_\inneridx^2  - \frac{c^2}{2\alpha} +  \lipsymb c - (f(y_{\inneridx}) -  \inf_{y \in \proj_{\cS}(y_\inneridx)} f(y))\\
	 &\leq \frac{\alpha^{-1}+\quadapprox}{2}D_\inneridx^2  - \frac{c^2}{2\alpha} +  \lipsymb c - \mu D_{\inneridx}\\
	 	% &\leq \frac{\beta_\inneridx+\quadapprox}{2}\|y- y_{\inneridx}\|^2  - \frac{\beta_\inneridx\epsilon^2}{2} +  (2\lipsymb + \mu)\epsilon - \mu D_{\inneridx} + \mu \epsilon\\
	&\leq \frac{\alpha^{-1}+\quadapprox}{2}D_\inneridx^2  + \frac{\alpha\lipsymb^2 }{2} - \mu D_{\inneridx}.
	%&\leq \frac{\beta_t+\tau}{2}\|x- x_{t}\|^2  + \frac{2\lipsymb^2}{\beta_t} - \EE_t[\varphi(x_{t+1}) -  \varphi(x)],
	\end{align*}
	where the third inequality follows from assumption~\ref{assumption:sharp},  and the fourth inequality follows by maximizing the right-hand-side in $c \in\R$. Then, dividing through by $\frac{1}{2\alpha}$ and multiplying by $1_{A_\inneridx}$, we arrive at
\begin{align}
\EE_\inneridx\left[D_{k+1}^21_{A_{\inneridx+1}}   \right] \leq \EE_\inneridx\left[D_{k+1}^21_{A_{\inneridx}}   \right] &\leq \left((1+\alpha\quadapprox )D_{\inneridx}^2  + \alpha^2\lipsymb^2 - 2\mu\alpha D_{\inneridx}\right) 1_{A_{\inneridx}} \notag\\
&\leq  \left(D_{\inneridx}^2  + \alpha^2\lipsymb^2 - \alpha\left(2\mu - \quadapprox D_{\inneridx} \right) D_{\inneridx}\right) 1_{A_{\inneridx}} \notag\\
&\leq D_{\inneridx}^21_{A_{\inneridx}}  + \alpha^2\lipsymb^2 - \alpha(2-\gamma)\mu D_{\inneridx}1_{A_{\inneridx}},\notag %\label{eq:descent_ak}
\end{align}
where the first inequality follows since $A_{\inneridx+1} \subseteq A_{\inneridx}$, the second inequality follows since $A_{\inneridx}$ is $\cF_k$ measurable, and the fourth inequality follows since on the event $A_k$, we have $y_k \in \cT_{\gamma}$. This completes the proof of \eqref{eq:descent_ak}. Next, applying the law of total expectation, we obtain
$$
\EE\left[D_{k+1}^21_{A_{\inneridx+1}}   \right]  \leq \EE\left[D_{\inneridx}^21_{A_\inneridx}\right]  + \alpha^2\lipsymb^2 - (2-\gamma)\alpha\mu\EE\left[D_{\inneridx}1_{A_\inneridx}\right].
$$
Iterating the inequality and rearranging, we deduce
$$\sum_{k=0}^K (2-\gamma)\alpha\mu\EE\left[D_{\inneridx}1_{A_\inneridx}\right]\leq (K+1)\alpha^2\lipsymb^2 +\EE\left[D_{0}^21_{A_0}\right]$$
Dividing through by $(K+1) (2-\gamma)\alpha\mu$, we recognize the left-hand side as $\EE\left[D_{K^\ast}1_{A_{\INNERIDX^\ast}}\right]$, and therefore
\begin{align*}
 \EE\left[D_{K^\ast}1_{A_{\INNERIDX^\ast}}\right] \leq \frac{ \delta\left(\frac{\gamma\mu}{\quadapprox}\right)^2 + (\INNERIDX+1)\alpha^2\lipsymb^2}{(2-\gamma)(K+1)\mu\alpha}.
\end{align*}
Finally, note $\EE\left[D_{\INNERIDX^\ast}1_{A_{\INNERIDX}}\right] \leq \EE\left[D_{K^\ast}1_{A_{\INNERIDX^\ast}}\right]$
since $A_\INNERIDX \subseteq A_{\INNERIDX^\ast}$. This completes the proof of the proposition.
\end{proof}

Now we will estimate the probability of the event $A_{\INNERIDX}$.
\begin{proposition}\label{claim:event_bound}
The estimate holds:
\begin{align*}
\PP(A_\INNERIDX) \geq  \PP(B) - \left( \delta+ \left(\frac{\quadapprox}{\gamma\mu}\right)^2K \alpha^2\lipsymb^2\right).
\end{align*}
\end{proposition}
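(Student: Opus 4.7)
\textbf{Proof Proposal for Proposition~\ref{claim:event_bound}.} The plan is to show the complementary bound $\PP(B \setminus A_\INNERIDX) \leq \delta + \left(\frac{\quadapprox}{\gamma\mu}\right)^2 K\alpha^2\lipsymb^2$, then use $\PP(A_K) = \PP(B) - \PP(B\setminus A_K)$. The strategy is to decompose the ``bad event'' $B\setminus A_K$ into the disjoint slices $A_k \setminus A_{k+1} = \{\tau = k+1\} \cap B$ for $k = 0, \ldots, K-1$ (noting that $A_0 = B$ since on $B$ we have $y_0 \in \cT_{\gamma\sqrt{\delta}} \subseteq \cT_\gamma$), and control each slice via the squared-distance descent inequality~\eqref{eq:descent_ak}.

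The key observation is that on $A_k \setminus A_{k+1}$ the iterate crosses the boundary of the tube at time $k+1$, so $D_{k+1} > \gamma\mu/\quadapprox$; hence
\begin{align*}
\left(\tfrac{\gamma\mu}{\quadapprox}\right)^2 \PP(A_k \setminus A_{k+1}) \leq \EE\left[D_{k+1}^2 1_{A_k \setminus A_{k+1}}\right] = \EE\left[D_{k+1}^2 1_{A_k}\right] - \EE\left[D_{k+1}^2 1_{A_{k+1}}\right],
\end{align*}
where the equality uses $A_{k+1} \subseteq A_k$. Taking expectations in~\eqref{eq:descent_ak} and rearranging gives $\EE[D_{k+1}^2 1_{A_k}] \leq \EE[D_k^2 1_{A_k}] + \lipsymb^2\alpha^2$, after dropping the non-positive sharpness term $-(2-\gamma)\mu\alpha\EE[D_k 1_{A_k}]$. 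Substituting in produces
\begin{align*}
\left(\tfrac{\gamma\mu}{\quadapprox}\right)^2 \PP(A_k \setminus A_{k+1}) \leq \EE[D_k^2 1_{A_k}] - \EE[D_{k+1}^2 1_{A_{k+1}}] + \lipsymb^2\alpha^2.
\end{align*}

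Summing the last inequality from $k=0$ to $K-1$ yields a telescoping right-hand side together with $K$ copies of the noise term, so
\begin{align*}
\left(\tfrac{\gamma\mu}{\quadapprox}\right)^2 \PP(B\setminus A_\INNERIDX) = \left(\tfrac{\gamma\mu}{\quadapprox}\right)^2 \sum_{k=0}^{K-1}\PP(A_k\setminus A_{k+1}) \leq \EE[D_0^2 1_{B}] - \EE[D_K^2 1_{A_K}] + K\lipsymb^2\alpha^2.
\end{align*}
Dropping the non-positive term $-\EE[D_K^2 1_{A_K}]$ and using that on $B$ we have $D_0 \leq \gamma\sqrt{\delta}\mu/\quadapprox$, so $\EE[D_0^2 1_B] \leq \delta(\gamma\mu/\quadapprox)^2$, we obtain $\PP(B\setminus A_K) \leq \delta + (\quadapprox/(\gamma\mu))^2 K\lipsymb^2\alpha^2$, which is the desired bound.

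The only subtlety I anticipate is the bookkeeping around the indicator $1_{A_{k+1}}$ versus $1_{A_k}$ when splitting $D_{k+1}^2 1_{A_k} = D_{k+1}^2 1_{A_{k+1}} + D_{k+1}^2 1_{A_k \setminus A_{k+1}}$; everything rests on this being valid, which it is since the events are nested. The rest is a clean telescoping argument, mirroring how one extracts tail bounds from supermartingale-like inequalities via a stopping time, and requires no new ideas beyond~\eqref{eq:descent_ak}.
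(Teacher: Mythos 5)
Your proof is correct. Every step checks out: $A_0=B$ because $\cT_{\gamma\sqrt{\delta}}\subseteq\cT_\gamma$ for $\delta\in(0,1)$; the slices $A_k\setminus A_{k+1}=\{\tau=k+1\}\cap B$ partition $B\setminus A_K$; on each slice $y_{k+1}\in\cX$ (the proximal step is constrained to $\cX$) and $y_{k+1}\notin\cT_\gamma$, so indeed $D_{k+1}>\gamma\mu/\quadapprox$; and taking total expectations in~\eqref{eq:descent_ak}, dropping the nonpositive sharpness term, and telescoping delivers exactly the claimed bound after using $\EE[D_0^2 1_B]\leq\delta(\gamma\mu/\quadapprox)^2$.

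The paper reaches the same conclusion by a slightly different bookkeeping device: it introduces the stopped process $Y_\inneridx=D^2_{\inneridx\wedge\tau}1_B$, applies Markov's inequality to $Y_\tau 1_{\tau\leq\INNERIDX}$ at threshold $(\gamma\mu/\quadapprox)^2$, bounds $\EE[Y_\tau 1_{\tau\leq\INNERIDX}]\leq\EE[Y_\INNERIDX]$, and then iterates the one-step drift bound $\EE_\inneridx[Y_{\inneridx+1}]\leq Y_\inneridx+\alpha^2\lipsymb^2$. Your first-passage decomposition and the paper's stopped-process-plus-Markov argument are two standard renderings of the same maximal inequality for an almost-supermartingale with bounded additive drift; they use identical inputs (inequality~\eqref{eq:descent_ak}, the tube radius at the escape time, and the initialization bound on $D_0^2 1_B$) and yield identical constants. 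Your version is marginally more self-contained in that it never needs to define the stopped process or invoke Markov's inequality explicitly, at the cost of slightly more careful indicator bookkeeping, which you handle correctly.
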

\begin{proof}
Observe the decomposition
$$
\PP(B) = \PP(B \text{ and } \tau \leq \INNERIDX)  + \PP(B \text{ and } \tau > \INNERIDX) = \PP(B \text{ and } \tau \leq \INNERIDX) + \PP(A_\INNERIDX) ,
$$
and therefore
\begin{equation}\label{eqn:comp_prob1}
\PP(A_\INNERIDX)\geq \PP(B)-\PP(B \text{ and } \tau \leq \INNERIDX).
\end{equation}
We aim to upper bound $\PP(B \text{ and } \tau \leq \INNERIDX)$. To this end,
let $Y_\inneridx = D^2_{\inneridx \wedge \tau}1_{B}$ denote the stopped process.
We successively compute
\begin{equation}\label{eqn:comp_prob2}
\PP(B \text{ and } \tau \leq \INNERIDX)  = \PP\left(Y_{\tau} 1_{\tau \leq \INNERIDX} > \left(\frac{\gamma\mu}{\quadapprox }\right)^2\right) \leq \frac{ \EE\left[ Y_\tau 1_{\tau \leq K} \right]}{\left(\frac{\gamma\mu}{\quadapprox }\right)^2},
% \leq \delta +  \left(\frac{\quadapprox}{\gamma\mu}\right)^2\INNERIDX  \alpha^2\lipsymb^2.
\end{equation}
where the last estimate uses Markov's inequality. Next, observe
\begin{equation}\label{eqn:comp_prob3}
\EE\left[ Y_\tau 1_{\tau \leq \INNERIDX} \right]\leq \EE\left[ Y_\INNERIDX 1_{\tau > \INNERIDX} \right] + \EE\left[ Y_\INNERIDX 1_{\tau \leq \INNERIDX} \right]=\EE\left[ Y_\INNERIDX \right].
\end{equation}
We next upper bound $\EE[Y_K]$. To this end, observe
\begin{align}
\EE_{\inneridx} \left[ Y_{\inneridx+1} \right] &= \EE_{\inneridx} \left[ Y_{\inneridx+1} 1_{\tau \leq \inneridx}\right] + \EE_{\inneridx} \left[ Y_{\inneridx+1}1_{\tau > \inneridx}\right]\notag\\
&= Y_{\inneridx} 1_{\tau \leq \inneridx} + \EE_\inneridx\left[D^2_{\inneridx+1}1_{A_{\inneridx}}   \right]\notag\\
&\leq Y_{\inneridx} 1_{\tau \leq \inneridx} + D_{\inneridx}^21_{A_{\inneridx}}  +\alpha^2\lipsymb^2 - (2-\gamma)\alpha\mu D_{\inneridx}1_{A_{\inneridx}}   \label{eqn:gettingannoyed}\\
&\leq Y_{\inneridx} 1_{\tau \leq \inneridx} +D_{\inneridx\wedge\tau}^21_{\tau > \inneridx}1_{B} + \alpha^2\lipsymb^2 = Y_\inneridx + \alpha^2\lipsymb^2,\notag
\end{align}
where \eqref{eqn:gettingannoyed} follows from~\eqref{eq:descent_ak}. We now use the law of total expectation to iterate the above inequality:
\begin{equation}\label{eqn:comp_prob4}
\EE\left[ Y_\INNERIDX \right] \leq \EE\left[ Y_0 \right] + \INNERIDX \alpha^2\lipsymb^2 \leq D_0^21_{B} + \INNERIDX \alpha^2\lipsymb^2 \leq\delta \left(\frac{\gamma\mu}{\quadapprox }\right)^2 + \INNERIDX  \alpha^2\lipsymb^2.
\end{equation}
Combining the estimates \eqref{eqn:comp_prob1}, \eqref{eqn:comp_prob2}, \eqref{eqn:comp_prob3}, and \eqref{eqn:comp_prob4}  completes the proof.

\end{proof}

\section{Proofs from Section~\ref{sec:highprob}}

\subsection{Proof of Lemma~\ref{lem:prox_subproblem}}\label{sec:aux_lemma}
Fix $\gamma \in (0, 2)$ and a point $y \in \cT_\gamma$. Recall that $f$ is $\quadapprox$-weakly convex on an open convex set containing $\cX$ \cite[Lemma 4.1]{davis2019stochastic}. Consequently, the proximal subproblem \eqref{eqn:prox_subprob:strong} is $(\rho-\quadapprox)$-strongly convex.
Before proving the remaining portion of the lemma, we first show that subgradients of the extended valued function $f + \delta_\cX$ are bounded below. This was essentially already observed in \cite[Lemma 2.1]{MR3869491}. We provide a quick proof for completeness.

\begin{lem}\label{lem:appeal:sharp_subgrad_out}
The estimate:
$$
\dist(0, \partial f(x) + N_\cX(x)) \geq \left(1- \tfrac{\gamma}{2}\right)\mu \qquad \textrm{holds for all }x \in \cT_\gamma\backslash \cX^\ast.
$$
\end{lem}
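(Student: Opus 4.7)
The plan is to exploit the $\eta$-weak convexity of $f$ together with the sharpness assumption~\ref{assumption:sharp} (specialized to $\cS = \cX^\ast$, since this is the setting of Section~\ref{sec:highprob}). The essential idea is: any subgradient $v$ of $f + \delta_\cX$ at $x$ must ``push'' at least as strongly as the sharp growth, up to a weak-convexity correction that is small inside the tube $\cT_\gamma$.

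Concretely, I would fix $x \in \cT_\gamma \setminus \cX^\ast$ and take an arbitrary decomposition $v = u + w$ with $u \in \partial f(x)$ and $w \in N_\cX(x)$. Weak convexity of $f$ on $U \supseteq \cX$ gives
\[
f(y) \geq f(x) + \langle u, y - x\rangle - \tfrac{\eta}{2}\|y - x\|^2 \qquad \forall y \in U,
\]
and the normal cone inequality $\langle w, y - x\rangle \leq 0$ for $y \in \cX$ lets me replace $u$ with $v$ on the right side while preserving the bound:
\[
f(y) \geq f(x) + \langle v, y - x\rangle - \tfrac{\eta}{2}\|y - x\|^2 \qquad \forall y \in \cX.
\]
Next I would plug in $y = \bar x := \proj_{\cX^\ast}(x)$, which lies in $\cX$ since $\cX^\ast \subseteq \cX$. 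Writing $d := \dist(x, \cX^\ast) = \|\bar x - x\|$ and using sharpness $f(x) - \min_\cX f \geq \mu d$ on the left together with Cauchy--Schwarz on the right, this collapses to
\[
\mu d \;\leq\; f(x) - f(\bar x) \;\leq\; -\langle v, \bar x - x\rangle + \tfrac{\eta}{2}d^2 \;\leq\; \|v\|\,d + \tfrac{\eta}{2}d^2.
\]
Dividing by $d > 0$ (which is valid because $x \notin \cX^\ast$) yields $\|v\| \geq \mu - \tfrac{\eta}{2}d$.

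Finally I would invoke $x \in \cT_\gamma$, which by definition gives $d \leq \gamma\mu/\eta$, and hence $\tfrac{\eta}{2} d \leq \gamma\mu/2$. This produces $\|v\| \geq (1 - \gamma/2)\mu$, and taking the infimum over all $v \in \partial f(x) + N_\cX(x)$ finishes the proof. There is no real obstacle here: the only subtlety is being careful that weak convexity of $f$ alone (not of $f + \delta_\cX$) suffices, which is handled by absorbing the normal-cone term via its sign against $y - x$ for $y \in \cX$.
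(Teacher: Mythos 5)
Your proof is correct and is essentially the paper's own argument: both combine sharpness with the weak-convexity subgradient inequality at the projection $\bar x \in \proj_{\cX^\ast}(x)$, absorb the normal-cone term using convexity of $\cX$, apply Cauchy--Schwarz, divide by $\dist(x,\cX^\ast)>0$, and finish with the tube bound $\dist(x,\cX^\ast)\leq \gamma\mu/\quadapprox$. You merely spell out the decomposition $v=u+w$ and the sign of $\langle w, y-x\rangle$ that the paper leaves implicit in the phrase ``weak convexity of $f$ and convexity of $\cX$.''
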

\begin{proof}
Fix any $x \in \cT_\gamma\backslash \cX^\ast$ and $v \in \partial f(x) + N_{\cX}(x)$, and let $\bar x \in \proj_{\cX^\ast}(x)$. We successively compute
$$
\mu\cdot \dist(x, \cX^\ast) \leq f(x) - \inf_{\cX} f \leq \dotp{v,x - \bar x} + \frac{ \quadapprox }{2}\dist(x, \cX^\ast)^2,
$$
where the first inequality follows from sharpness~\ref{assumption:sharp}, and the second from  weak convexity of $f$ and convexity of $\cX$.
Rearranging and using the Cauchy-Schwarz inequality, we deduce
$$
\left( \mu - \frac{\quadapprox }{2}\dist(x, \cX^\ast)\right) \dist(x, \cX^\ast)  \leq \dist(x, \cX^\ast)\|v\|.
$$
Dividing both sides by $\dist(x, \cX^\ast)$ yields the result.
\end{proof}

Now, let $\bar y$ be any minimizer of the proximal problem \eqref{eqn:prox_subprob:strong} and suppose $ \rho < \left(\tfrac{2-\gamma}{2\gamma}\right)\quadapprox$. Clearly, to establish Lemma~\ref{lem:prox_subproblem}, it suffices to argue the inclusion $\bar y\in \cX^{*}$.
To this end, choose any $\hat y \in \proj_{\cX^\ast}(y)$. Observe
$$
\frac{\rho}{2}\|\bar y-y\|^2 \leq  f(\hat y) - f(\bar y) + \frac{\rho}{2}\|\hat y - y\|^2\leq \frac{\rho}{2}\dist^2(y,\cX^*) \leq \frac{\rho \gamma^2 \mu^2}{2\quadapprox^2},$$
where the first inequality follows from the definition of $\bar y$, the second uses the definition of $\hat y$, and the third follows from the assumption $y\in \cT_{\gamma}$.
Thus, we deduce
 $\|\bar y-y \| \leq \frac{\gamma \mu}{\quadapprox }$.
Consequently, using sharpness we conclude
$$
\mu \dist(\bar y, \cX^\ast) \leq f(\bar y) - f(\hat  y)  \leq \frac{\rho}{2} \|\hat y - y\|^2 \leq \frac{\rho}{2}  \frac{\gamma^2 \mu^2}{\quadapprox ^2} \leq \frac{\gamma \mu^2}{2\quadapprox }
$$
where the last inequality follows from the assumption $\rho < \left(\tfrac{2-\gamma}{2\gamma}\right)\quadapprox$.
Consequently, $\bar y$ lies in the tube $\cT_\gamma$.
Now, define $v := \rho(y - \bar y) \in \partial f(\bar y) + N_{\cX}(\bar y)$. Appealing to Lemma~\ref{lem:appeal:sharp_subgrad_out}, we deduce  in the case $\bar y \notin \cX^\ast$, the contradiction $(1-\tfrac{\gamma}{2}) \mu/ \rho \leq \|v\|/\rho = \|y - \bar y\| \leq \frac{\gamma \mu}{\quadapprox }$. Therefore, $\bar y$ lies in $ \cX^\ast$, as we had to show.

\subsection{Proof of Lemma~\ref{lem:key_lem_sc}}\label{sec:proof_key_lem_sc}
As in the proof of Lemma~\ref{lem:key_lem}, we let $\cF_\inneridx$ denote the $\sigma$-algebra generated by the history of the algorithm up to iteration $\inneridx$ and define the shorthand for conditional expectation $\EE_k\left[ \cdot \right] := \EE\left[ \cdot \mid \cF_k\right]$. Define the stopping time
	$$
	\tau := \min\{ \inneridx \mid y_\inneridx \notin \cT_{\gamma}\},
	$$
	 the sequence of events
	$
	A_{\inneridx} := \{ \tau > \inneridx\},
	$
	and the quantities
	$$D_\inneridx := \|y_k - \bar y_0\| \qquad \textrm{and}\qquad E_\inneridx := \frac{\alpha^{-1} + \quadapprox}{2}D_\inneridx^2 +  \frac{\rho}{2}\|y_{\inneridx} - y_0\|^2.$$
	Note that by Lemma~\ref{lem:prox_subproblem}, the
	inclusion $\bar y_0 \in \proj_{\cX^\ast}(y_0)$ holds. We will use this observation throughout.

We begin with the estimate
\begin{align*}
\PP\left(D_{\INNERIDX^\ast}^2 \leq \varepsilon^2 \right)\ &\geq \PP\left(D_{\INNERIDX^\ast}^2 \leq \varepsilon^2 \mid A_\INNERIDX\right) P(A_\INNERIDX) \\
&=\left( 1 - \PP\left(D_{\INNERIDX^\ast}^2 \geq \varepsilon^2 \mid A_\INNERIDX\right)
\right) \PP(A_\INNERIDX)\\
&\geq \left( 1 - \frac{\EE\left[D_{\INNERIDX^\ast}^2 \mid  A_\INNERIDX\right]}{\varepsilon^2}
\right) \PP(A_\INNERIDX)\\
&=\left( 1 - \frac{\EE\left[D_{\INNERIDX^\ast}^2 1_{A_\INNERIDX}\right]}{\varepsilon^2\PP(A_\INNERIDX)}
\right) \PP(A_\INNERIDX)\\
&=\PP(A_\INNERIDX)-\frac{\EE\left[D_{\INNERIDX^\ast}^2 1_{A_\INNERIDX}\right]}{\varepsilon^2}
\end{align*}
%
%	 Throughout the proof we will use the following fact due to Lemma~\ref{lem:prox_subproblem}: the inclusion $\bar y_0 \in \proj_{\cX^\ast}(y_0)$ holds and therefore $\dist(y_0, \cX^\ast) \leq \frac{\sqrt{\initqual}\gamma\mu}{\quadapprox}$.
%
The result will now follow immediately from the following two propositions, which establish an upper bound on $\EE\left[D_{\INNERIDX^\ast}^2 1_{A_\INNERIDX} \right]$ and a lower bound on $\PP(A_K)$, respectively.
We note that the first proposition is a quick modification of~\cite[Lemma 4.2]{davis2019stochastic}. We include a proof for completeness.
	\begin{proposition}\label{claim:exp_1_sc}
	Define $\nu := \rho - \quadapprox$. Then the following bounds hold:
	\begin{align}
	\EE_\inneridx \left[ E_{\inneridx+1} 1_{A_k}\right] &\leq E_k1_{A_k}  - \frac{\strongconvex}{2} D_\inneridx^21_{A_k} + \frac{\lipsymb^2 \alpha}{2 }. \label{eq:E_kdescent}\\
 \EE\left[D^2_{\INNERIDX^\ast}1_{A_{\INNERIDX}}\right]&\leq  \frac{(K+1) \lipsymb^2 \alpha + (\alpha^{-1} + \eta) \cdot\initquallem \left(\frac{\gamma \mu}{\quadapprox}\right)^2}{\nu(\INNERIDX + 1)}.
\end{align}
%where $\beta_k = (k+1)(\rho - \quadapprox - \weakmodel) - \quadapprox$.
	\end{proposition}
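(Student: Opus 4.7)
The plan is to mirror the structure of Proposition~\ref{claim:exp_1} but exploit strong convexity of the regularized proximal subproblem and strong convexity of $F(x):=f(x)+\tfrac{\rho}{2}\|x-y_0\|^2$, whose minimizer is exactly $\bar y_0$. The key algebraic identity I will use throughout is $\frac{\alpha^{-1}+\rho}{2}=\frac{\alpha^{-1}+\eta}{2}+\frac{\nu}{2}$, which lets me ``peel off'' a $\frac{\nu}{2}D_{k+1}^2$ term from the natural strong-convexity estimate to obtain the $E_{k+1}$ quantity on the left-hand side.

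For \eqref{eq:E_kdescent}, I would start from the fact that the subproblem in Algorithm~\ref{alg:stoc_prox_sc} is $(\alpha^{-1}+\rho)$-strongly convex in $y$, so testing against $y=\bar y_0$ yields
\[
\tfrac{\alpha^{-1}+\rho}{2}D_{k+1}^2+\tfrac{\rho}{2}\|y_{k+1}-y_0\|^2+\tfrac{1}{2\alpha}\|y_{k+1}-y_k\|^2\leq f_{y_k}(\bar y_0,z_k)-f_{y_k}(y_{k+1},z_k)+\tfrac{1}{2\alpha}D_k^2+\tfrac{\rho}{2}\|\bar y_0-y_0\|^2.
\]
Next I apply Lemma~\ref{lem:lipschitz} to replace $f_{y_k}(y_{k+1},z_k)$ by $f_{y_k}(y_k,z_k)-L(y_k,z_k)\|y_{k+1}-y_k\|$, and then take conditional expectation, using~\ref{assumption:oneside} to bound $\EE_k[f_{y_k}(\bar y_0,z_k)-f_{y_k}(y_k,z_k)]\leq f(\bar y_0)-f(y_k)+\tfrac{\eta}{2}D_k^2$. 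The $\nu$-strong convexity of $F$ (which holds since $f$ is $\eta$-weakly convex on $U$ and $\rho>\eta$) together with optimality of $\bar y_0$ gives $f(\bar y_0)-f(y_k)\leq \tfrac{\rho}{2}\|y_k-y_0\|^2-\tfrac{\rho}{2}\|\bar y_0-y_0\|^2-\tfrac{\nu}{2}D_k^2$, and the two $\|\bar y_0-y_0\|^2$ terms cancel.

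The delicate step is the noise term $\EE_k[L(y_k,z_k)\|y_{k+1}-y_k\|]$. I bound this by Young's inequality with parameter $\alpha$: it is at most $\tfrac{\alpha}{2}\EE_k[L(y_k,z_k)^2]+\tfrac{1}{2\alpha}\EE_k[\|y_{k+1}-y_k\|^2]$. On the event $A_k$ we have $y_k\in\cT_\gamma\subseteq\cT_2$, so~\ref{assumption:lipschitz} gives $\EE_k[L(y_k,z_k)^2]\leq\lipsymb^2$, and the $\tfrac{1}{2\alpha}\|y_{k+1}-y_k\|^2$ piece cancels exactly with the corresponding term on the left-hand side of the strong-convexity inequality. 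Rewriting $\tfrac{\alpha^{-1}+\rho}{2}D_{k+1}^2=\tfrac{\alpha^{-1}+\eta}{2}D_{k+1}^2+\tfrac{\nu}{2}D_{k+1}^2$ and dropping the nonnegative $\tfrac{\nu}{2}D_{k+1}^2$ term yields exactly \eqref{eq:E_kdescent} after multiplying by $1_{A_k}$.

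For the second inequality, I take total expectation in \eqref{eq:E_kdescent} (noting $A_{k+1}\subseteq A_k$ so that $\EE[E_{k+1}1_{A_{k+1}}]\leq\EE[E_{k+1}1_{A_k}]$), telescope from $k=0$ to $K$, and rearrange to get $\sum_{k=0}^K\tfrac{\nu}{2}\EE[D_k^21_{A_k}]\leq E_0+(K+1)\tfrac{\alpha\lipsymb^2}{2}$. Since $y_0=y_0$ gives $E_0=\tfrac{\alpha^{-1}+\eta}{2}\|y_0-\bar y_0\|^2\leq\tfrac{\alpha^{-1}+\eta}{2}\cdot\initquallem(\gamma\mu/\eta)^2$ (using $y_0\in\cT_{\gamma\sqrt\initquallem}$ and Lemma~\ref{lem:prox_subproblem} to identify $\|y_0-\bar y_0\|=\dist(y_0,\cX^*)$), and since $K^*$ is uniform on $\{0,\ldots,K\}$ and $A_K\subseteq A_{K^*}$, we obtain $\EE[D_{K^*}^21_{A_K}]\leq\tfrac{2E_0+(K+1)\alpha\lipsymb^2}{(K+1)\nu}$, which matches the stated bound. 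The main obstacle is correctly orchestrating the cancellation of the $\|y_{k+1}-y_k\|^2$ term and the $\|\bar y_0-y_0\|^2$ term; once those cancellations line up, the rest is bookkeeping.
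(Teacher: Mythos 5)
Your proposal is correct and follows essentially the same route as the paper: test the $(\alpha^{-1}+\rho)$-strongly convex subproblem against $\bar y_0$, apply Lemma~\ref{lem:lipschitz} and Assumption~\ref{assumption:oneside}, use $\nu$-strong convexity of $g(y)=f(y)+\tfrac{\rho}{2}\|y-y_0\|^2$ at its minimizer $\bar y_0$ to produce the $-\tfrac{\nu}{2}D_k^2$ term, discard the extra $\tfrac{\nu}{2}D_{k+1}^2$ on the left, and then telescope, with $E_0$ controlled via Lemma~\ref{lem:prox_subproblem}. The only cosmetic difference is that you handle the noise term by pointwise Young's inequality (cancelling the $\tfrac{1}{2\alpha}\|y_{k+1}-y_k\|^2$ term), whereas the paper applies Cauchy--Schwarz and maximizes the concave quadratic $-\tfrac{c^2}{2\alpha}+\lipsymb c$; both yield the identical bound $\tfrac{\lipsymb^2\alpha}{2}$.
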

\begin{proof}	Define the function $g(y)  := f(y) + \frac{\rho}{2} \|y- y_0\|^2$ and notice that $g$ is strongly convex with parameter $\nu$.
	Observe also that the loss function $y\mapsto f_{y_\inneridx}(y, \data_\inneridx)+\frac{1}{2\alpha}\|y-y_\inneridx\|^2 + \frac{\rho}{2}\|y - y_0\|^2$ is strongly convex on $\cX$ with constant $\alpha^{-1} + \rho $ and $y_{\inneridx+1}$ is its minimizer. Hence for any $y\in \cX$, the inequality holds:
	\begin{align*}
	\left(f_{y_\inneridx}(y, \data_\inneridx) +  \tfrac{1}{2\alpha}\|y-y_\inneridx\|^2+\tfrac{\rho}{2}\|y - y_0\|^2\right) \geq \Big(&f_{y_k}(y_{\inneridx +1}, \data_k)  +\tfrac{1}{2\alpha} \|y_{\inneridx+1}-y_\inneridx\|^2+\tfrac{\rho}{2}\|y_{k+1} - y_0\|^2\Big) \\
	&\hspace{20pt}+ \tfrac{\alpha^{-1} + \rho}{2}\|y - y_{\inneridx+1}\|^2.
	\end{align*}
	Rearranging and taking expectations we successively deduce that if $y_\inneridx \in \cT_\gamma$, then
	\begin{align}
	&\EE_\inneridx\left[\frac{\alpha^{-1} + \rho}{2}\|y - y_{\inneridx+1}\|^2 +\frac{1}{2\alpha} \|y_{\inneridx+1}-y_\inneridx\|^2 - \frac{1}{2\alpha}\|y - y_{\inneridx}\|^2\right] \notag\\
	&\leq \EE_\inneridx\left[ f_{y_\inneridx}(y, \data_\inneridx) + \frac{\rho}{2}\|y - y_0\|^2  - (f_{y_\inneridx}( y_{\inneridx+1}, \data_\inneridx) + \frac{\rho}{2}\|y_{\inneridx + 1} - y_0\|^2) \right]\notag\\
	&\leq \EE_\inneridx\left[  f_{y_\inneridx}(y, \data_\inneridx) + \frac{\rho}{2} \|y- y_0\|^2- ( f_{y_\inneridx}( y_{\inneridx}, \data_\inneridx) + \frac{\rho}{2} \|y_{\inneridx + 1} - y_0\|^2) \right] \notag\\
	&\hspace{20pt} +  \EE_k\left[L(y_k,\data_\inneridx)\|y_{\inneridx+1}-y_\inneridx\|\right]\label{eqn:lip_modelcvx}\\
	&\leq \EE_\inneridx\left[f(y) + \frac{\rho}{2} \|y- y_0\|^2- ( f(y_\inneridx) + \frac{\rho}{2} \|y_{\inneridx} - y_0\|^2) \right] + \EE_\inneridx\left[\frac{\rho}{2} \|y_{\inneridx} - y_0\|^2 - \frac{\rho}{2} \|y_{\inneridx + 1} - y_0\|^2 \right] \notag \\
	&\hspace{20pt} + \EE_k\left[\frac{\quadapprox}{2}\|y - y_{\inneridx}\|^2 +  L(y_k, \data_\inneridx)\|y_{\inneridx+1}-y_\inneridx\|\right] \notag \\
	&\leq g(y) -  g( y_{\inneridx}) +  \frac{\quadapprox}{2}\|y - y_{\inneridx}\|^2 + \sup_{w \in \cT_2}\sqrt{\EE_{\data}[L(w, \data)^2]}\cdot\sqrt{\EE_\inneridx[\|y_{\inneridx+1}-x_\inneridx\|^2]} \notag\\
	&\hspace{20pt}   +\EE_\inneridx\left[\frac{\rho}{2} \|y_{\inneridx} - y_0\|^2 - \frac{\rho}{2} \|y_{\inneridx + 1} - y_0\|^2 \right] \label{eqn:CS}
\end{align}
where \eqref{eqn:lip_modelcvx} follows from Lemma~\ref{lem:lipschitz}, while inequality \eqref{eqn:CS} follows from Cauchy-Schwarz and Assumption~\ref{assumption:oneside}.

	Define $c:=\sqrt{\EE_\inneridx[\| y_{\inneridx+1}-y_\inneridx\|^2]}$ and notice $c\geq \EE_\inneridx\|y_\inneridx - y_{\inneridx+1}\|$.
	Thus, letting $y  = \bar y_0$ and rearranging \eqref{eqn:CS}, %, and using the identities
%$$
%\beta_k + \quadapprox = \frac{(k+1)\strongconvex}{2} + \sigma   \qquad\text{and} \qquad \beta_k + \rho - \weakmodel = \frac{(\inneridx + 1) \strongconvex}{2}  + \rho - \eta_1 - \eta_2 =  \frac{(k+3)\strongconvex}{2} + \sigma ,
%$$
we immediately deduce that if $y_\inneridx \in \cT_\gamma$, we have
%	\begin{align}
%&\EE_t\left[\frac{\beta_t - \eta}{2}\| x^\ast - x_{t+1}\|^2 +\frac{\beta_t}{2} \|x_{t+1}-x_t\|^2 - \frac{\beta_t}{2}\|x^\ast- x_{t}\|^2\right] \notag\\
%&\leq \EE_t[r(x^\ast)+f(x^\ast)-r(x_{t+1})-f(x_{t+1})]+\frac{\tau}{2}\|x^\ast-x_t\|^2\label{eqn:apply_mod_exp2cvx3}\\
%	&\quad+\lipsymb\EE_t[\|x_{t+1}-x_t\|]+\lipsymb\sqrt{\EE_t[\|x_{t+1}-x_t\|^2]},
%	\end{align}
%	In convex setting $\tau = 0$. Therefore, we find that
	\begin{align*}
	\EE_\inneridx\left[\frac{\alpha^{-1} + \quadapprox }{2}D_{\inneridx + 1}^2 + \frac{\rho}{2} \|y_{\inneridx + 1} - y_0\|^2  \right]   &\leq \frac{\alpha^{-1} + \eta}{2}D_{\inneridx}^2 + \frac{\rho}{2}\|y_{\inneridx} - y_0\|^2 - \frac{ c^2}{2\alpha} +  \lipsymb c - (g(y_{\inneridx}) -  g(\bar y_0))  \\
	 &\leq \frac{\alpha^{-1}+\eta}{2}D_{\inneridx}^2+  \frac{\rho}{2}\|y_{\inneridx} - y_0\|^2  + \frac{\lipsymb^2 \alpha}{2 } - \frac{\nu}{2} D_k^2,%\\
	 	% &\leq \frac{\beta_\inneridx+\quadapprox}{2}\|y- y_{\inneridx}\|^2  - \frac{\beta_\inneridx\epsilon^2}{2} +  (2\lipsymb + \mu)\epsilon - \mu D_{\inneridx} + \mu \epsilon\\
	%&\leq \frac{\beta_t+\quadapprox}{2}\|y- y_{\inneridx}\|^2  + \frac{2\lipsymb^2 }{(\beta_\inneridx - \weakmodel)} - \mu D_{\inneridx} .
	%&\leq \frac{\beta_t+\tau}{2}\|x- x_{t}\|^2  + \frac{2\lipsymb^2}{\beta_t} - \EE_t[\varphi(x_{t+1}) -  \varphi(x)],
	\end{align*}
	where the second inequality follows from strong convexity of $g$  and by maximizing the right-hand-side in $c \in\R$.
%	Now, define the quantity
%	$$
%	E_\inneridx := \frac{r_\inneridx r_{\inneridx+1}}{2}\|y_0^\ast- y_{\inneridx}\|^2 +  \frac{r_{\inneridx}\rho}{2}\|y_{\inneridx} - y_0\|^2.
%	$$
	Thus, multiplying through by $1_{A_k}$, we deduce that
	\begin{align*}
	\EE_\inneridx \left[ E_{\inneridx+1} 1_{A_k}\right] &\leq E_k1_{A_k}  - \frac{\strongconvex}{2} D_\inneridx^21_{A_k} + \frac{\lipsymb^2 \alpha}{2 }. %+ \frac{\rho \strongconvex }{2}\|y_\inneridx-y_0\|^2\\
	%&= E_\inneridx + \frac{\lipsymb^2r_{\inneridx+1} }{2\beta_\inneridx } - \frac{\strongconvex r_{\inneridx+1}}{2}D_\inneridx^2 + \frac{\rho \strongconvex }{4}\|y_\inneridx-y_0\|^2,
	\end{align*}
which proves~\eqref{eq:E_kdescent}. Iterating~\eqref{eq:E_kdescent}, using the tower rule, and rearranging, we deduce
$$
\frac{\nu}{2}\sum_{k=0}^K \EE\left[D_\inneridx^21_{A_{\inneridx}}\right]\leq (K+1)\frac{\lipsymb^2 \alpha}{2 } +E_0 \leq (K+1)\frac{\lipsymb^2 \alpha}{2 } + \frac{\alpha^{-1} + \eta}{2}\cdot\initquallem \left(\frac{\gamma \mu}{\quadapprox}\right)^2,
$$
where the last inequality follows from Lemma~\ref{lem:prox_subproblem} and the assumption $y_0 \in \cT_{\gamma\sqrt{\initquallem}}$.
Dividing through by $\frac{\nu}{2}(\INNERIDX + 1)$, we deduce
$$
 \EE\left[D_{K^\ast}^21_{A_{\INNERIDX^\ast}}\right] \leq \frac{(K+1) \lipsymb^2 \alpha + (\alpha^{-1} + \eta) \cdot\initquallem \left(\frac{\gamma \mu}{\quadapprox}\right)^2}{\nu(\INNERIDX + 1)}.
$$
Finally, note $\EE\left[D_{\INNERIDX^\ast}^21_{A_{\INNERIDX}}\right] \leq \EE\left[D_{K^\ast}^21_{A_{\INNERIDX^\ast}}\right]$
since $A_\INNERIDX \subseteq A_{\INNERIDX^\ast}$. This completes the proof of the proposition.
%where the first inequality follows since $A_\INNERIDX \subseteq A_{\INNERIDX^\ast}$, and the second inequality follows from the inequality $r_0r_1 \leq 4\strongconvex^2$ and the fact that we have $y_0 \in \cT_{\sqrt{\initqual} \gamma}$. This completes the proof of the proposition.
\end{proof}

Now we estimate the probability of the event $A_{\INNERIDX}$.
\begin{proposition}\label{claim:event_bound_sc}
The estimate holds:
\begin{align*}
\PP(A_\INNERIDX) \geq 1 - \initquallem -   \left(\frac{\quadapprox}{\gamma\mu}\right)^2\INNERIDX\lipsymb^2 \alpha^2.
\end{align*}
\end{proposition}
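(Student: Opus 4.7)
The plan is to mirror the argument of Proposition~\ref{claim:event_bound}, replacing the role of $D_k^2$ there by the Lyapunov function $E_k$ from Proposition~\ref{claim:exp_1_sc}. The key geometric observation is that since $\bar y_0 \in \cX^\ast = \cS$ by Lemma~\ref{lem:prox_subproblem}, one has $\dist(y_k,\cS) \leq D_k$. Consequently, the excursion event $\{y_k \notin \cT_\gamma\}$ is contained in $\{D_k > \gamma\mu/\quadapprox\}$, and hence in $\{E_k \geq \tfrac{\alpha^{-1}+\quadapprox}{2}(\gamma\mu/\quadapprox)^2\}$. In particular, on $\{\tau \leq K\}$ we can lower bound $E_\tau$ by $\tfrac{\alpha^{-1}+\quadapprox}{2}(\gamma\mu/\quadapprox)^2$, which is the quantity we will eventually feed into Markov's inequality.

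I would then define the stopped process $Y_k := E_{k \wedge \tau}$ and show it is a near-supermartingale. Splitting $\EE_k[Y_{k+1}]$ according to whether $\tau > k$ or $\tau \leq k$ and invoking the descent inequality \eqref{eq:E_kdescent} on the first piece, while using $E_{k\wedge\tau}\,1_{A_k} = E_k\,1_{A_k}$ since $A_k = \{\tau > k\}$, gives
$$\EE_k[Y_{k+1}] \leq Y_k + \tfrac{\lipsymb^2 \alpha}{2}.$$
Iterating via the tower rule, and using $y_0 \in \cT_{\gamma\sqrt{\initquallem}}$ together with $\bar y_0 \in \proj_{\cX^\ast}(y_0)$ to bound $E_0 = \tfrac{\alpha^{-1}+\quadapprox}{2}\|y_0 - \bar y_0\|^2 \leq \tfrac{\alpha^{-1}+\quadapprox}{2}\,\initquallem\,(\gamma\mu/\quadapprox)^2$, yields
$$\EE[Y_K] \leq \tfrac{\alpha^{-1}+\quadapprox}{2}\,\initquallem\,(\gamma\mu/\quadapprox)^2 + \tfrac{K \lipsymb^2 \alpha}{2}.$$

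Markov's inequality combined with the pointwise lower bound on $Y_K$ available on $\{\tau \leq K\}$ then produces
$$\PP(\tau \leq K) \leq \frac{\EE[Y_K]}{\tfrac{\alpha^{-1}+\quadapprox}{2}(\gamma\mu/\quadapprox)^2} \leq \initquallem + \frac{K\lipsymb^2 \alpha}{(\alpha^{-1}+\quadapprox)(\gamma\mu/\quadapprox)^2},$$
and the elementary identity $\alpha/(\alpha^{-1}+\quadapprox) = \alpha^2/(1+\alpha\quadapprox) \leq \alpha^2$ collapses the second term to $(\quadapprox/(\gamma\mu))^2 K \lipsymb^2 \alpha^2$, which gives the claimed bound after taking complements. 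The only real obstacle is careful bookkeeping with the stopping time in the supermartingale step; once $Y_k := E_{k\wedge\tau}$ is the Lyapunov function of choice, the remaining work is straightforward algebra using ingredients already established above.
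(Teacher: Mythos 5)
Your argument is correct and is essentially the paper's own proof: the paper also stops the Lyapunov process $E_k$, uses $\bar y_0\in\cX^\ast$ to compare $E_k$ with $\dist^2(y_k,\cX^\ast)$, applies the descent inequality \eqref{eq:E_kdescent} to get a near-supermartingale with drift $\lipsymb^2\alpha/2$, and finishes with Markov's inequality and the bound $\alpha/(\alpha^{-1}+\quadapprox)\leq\alpha^2$. The only cosmetic difference is that the paper rescales the stopped process by $r=2(\alpha^{-1}+\quadapprox)^{-1}$ so that $rE_0=\dist^2(y_0,\cX^\ast)$, whereas you carry the factor $\tfrac{\alpha^{-1}+\quadapprox}{2}$ through to the Markov step.
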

\begin{proof}
	Define $r := 2(\alpha^{-1} + \quadapprox)^{-1}$, let $\overline Y_{\inneridx} = \dist^2(y_{\inneridx\wedge \tau}, \cX^\ast)$,  and let $Y_\inneridx = rE_{\inneridx \wedge \tau}$ denote the stopped process.
We now estimate
\begin{equation}\label{eqn:highprolem1}
\PP(\tau \leq \INNERIDX ) = \PP\left(\overline Y_{\tau} 1_{\tau \leq \INNERIDX} > \left(\frac{\gamma\mu}{\quadapprox}\right)^2\right) \leq \frac{ \EE\left[ \overline Y_\tau 1_{\tau \leq \INNERIDX} \right]}{\left(\frac{\gamma\mu}{\quadapprox }\right)^2}.
\end{equation}
Next we upper bound the right-hand-side:
\begin{equation}\label{eqn:highprolem2}
\EE\left[ \overline Y_\tau 1_{\tau \leq \INNERIDX} \right] \leq \EE\left[ Y_\tau 1_{\tau \leq \INNERIDX} \right]\leq \EE\left[ Y_\INNERIDX 1_{\tau > \INNERIDX} \right] + \EE\left[ Y_\INNERIDX 1_{\tau \leq \INNERIDX} \right]= \EE\left[ Y_\INNERIDX \right],
\end{equation}
where the first inequality follows from the bound $\overline Y_k \leq Y_k$. Next, observe
\begin{align*}
\EE_{\inneridx} \left[ Y_{\inneridx+1} \right] &= \EE_{\inneridx} \left[ Y_{\inneridx+1} 1_{\tau \leq \inneridx}\right] + \EE_{\inneridx} \left[ Y_{\inneridx+1}1_{\tau > \inneridx}\right]\\
&= Y_{\inneridx} 1_{\tau \leq \inneridx} + \EE_\inneridx\left[rE_{\inneridx+1}1_{A_{\inneridx}}   \right]\\
&\leq Y_{\inneridx} 1_{\tau \leq \inneridx} + rE_{\inneridx}1_{A_{\inneridx}} -  \frac{\strongconvex}{2} D_\inneridx^21_{A_{\inneridx}}  + \frac{\lipsymb^2 \alpha}{2 }   \\
&\leq Y_{\inneridx} 1_{\tau \leq \inneridx} + rE_\inneridx1_{\tau > \inneridx} + r\frac{\lipsymb^2\alpha }{2 } = Y_\inneridx + \frac{\lipsymb^2\alpha  }{(\alpha^{-1} + \quadapprox)},
\end{align*}
where the first inequality follows from~\eqref{eq:E_kdescent}. We now use the law of total expectation to iterate the above inequality:
\begin{equation}\label{eqn:highprolem3}
\EE\left[ Y_\INNERIDX \right] \leq \EE\left[ Y_0 \right] + \frac{K\lipsymb^2\alpha  }{(\alpha^{-1} + \quadapprox)} \leq D_0^2 + \INNERIDX\lipsymb^2\alpha^2 \leq\initquallem \left(\frac{\gamma\mu}{\quadapprox }\right)^2 + \INNERIDX\lipsymb^2 \alpha^2,
\end{equation}
where the second inequality follows from the equality  $rE_{0} = D_0^2 = \dist^2(y_0, \cX^\ast)$. Combining \eqref{eqn:highprolem1}, \eqref{eqn:highprolem2}, and \eqref{eqn:highprolem3} completes the proof.
\end{proof}

\subsection{The ensemble method}

\begin{lem}[Ensemble method]\label{lem:ensemble}
	Let $\{x_i\}_{i=1}^m$ be independent random vectors in $\R^d$. Suppose that for $i = 1, \ldots, m$, the estimate holds:
	$$\PP(\|x_i-\bar x\|\leq \varepsilon)\geq p,$$
	where $p\in (\tfrac{1}{2},1)$ and $\varepsilon>0$ are some real numbers and $\bar x\in \R^d$ is a vector. Then with probability at least $1-\exp(-\frac{1}{2p}m(p-\frac{1}{2})^2)$, there exists an index $i^*$ satisfying
	\begin{equation}\label{eqn:index_want}
	|B_{2\varepsilon}(x_{i^*})\cap \{x_{i}\}_{i=1}^m|> \frac{m}{2}
	\end{equation}
	and for any index $i^*$ satisfying \eqref{eqn:index_want} it must be that $\|x_{i^*}-\bar x\|\leq 3\varepsilon$.
\end{lem}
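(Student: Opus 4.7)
The plan is to work on the high-probability event that strictly more than $m/2$ of the sampled points fall within distance $\varepsilon$ of $\bar x$, and then deduce both conclusions deterministically on that event.

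Let $Y_i := \mathbf{1}\{\|x_i - \bar x\| \leq \varepsilon\}$, so the $Y_i$ are independent Bernoulli variables with $\EE[Y_i] \geq p$. Define the good event
\[
G := \Bigl\{ \sum_{i=1}^m Y_i > m/2 \Bigr\}.
\]
I would first argue that on $G$, the two conclusions hold. For the existence claim, pick any $i^\ast$ with $Y_{i^\ast} = 1$ (at least one such index exists since more than $m/2$ of them do). For every other index $j$ with $Y_j=1$ we have $\|x_{i^\ast}-x_j\| \leq \|x_{i^\ast}-\bar x\| + \|\bar x - x_j\| \leq 2\varepsilon$, so $B_{2\varepsilon}(x_{i^\ast})$ contains more than $m/2$ of the samples and \eqref{eqn:index_want} holds. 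For the second claim, let $i^\ast$ be any index satisfying \eqref{eqn:index_want}. Since $|B_{2\varepsilon}(x_{i^\ast}) \cap \{x_i\}| > m/2$ and $|\{i : Y_i = 1\}| > m/2$, a simple pigeonhole argument guarantees the two sets intersect: there exists $j$ with $\|x_j-x_{i^\ast}\|\leq 2\varepsilon$ and $\|x_j-\bar x\|\leq\varepsilon$. The triangle inequality then gives $\|x_{i^\ast}-\bar x\|\leq 3\varepsilon$.

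It remains to lower-bound $\PP(G)$. By stochastic domination, $\sum_i Y_i$ stochastically dominates a $\mathrm{Binomial}(m,p)$ random variable $S$, so $\PP(G^c) \leq \PP(S \leq m/2)$. Writing $m/2 = (1-\kappa)mp$ with $\kappa = \frac{2p-1}{2p} \in (0,1)$, the standard multiplicative (lower-tail) Chernoff bound gives
\[
\PP(S \leq (1-\kappa)mp) \leq \exp\!\Bigl(-\tfrac{\kappa^2 mp}{2}\Bigr) = \exp\!\Bigl(-\tfrac{m(p-\tfrac12)^2}{2p}\Bigr),
\]
which matches the probability in the statement. Combining with the deterministic argument on $G$ completes the proof.

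The step I expect is least routine is the pigeonhole argument in the second claim: one must be careful that the condition $|B_{2\varepsilon}(x_{i^\ast}) \cap \{x_i\}| > m/2$, together with the $G$-event $|\{i : Y_i=1\}|>m/2$, forces a common index $j$ — this works because any two subsets of $\{1,\dots,m\}$ whose sizes strictly exceed $m/2$ must intersect. Everything else is a triangle-inequality manipulation or a standard Chernoff computation.
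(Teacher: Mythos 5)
Your proposal is correct and follows essentially the same route as the paper: both condition on the Chernoff-guaranteed event that more than $m/2$ of the points land in $B_\varepsilon(\bar x)$, use the triangle inequality to establish existence of a valid $i^*$, and use the fact that two subsets of $\{1,\dots,m\}$ of size exceeding $m/2$ must intersect to get the $3\varepsilon$ bound. Your write-up is in fact slightly more careful than the paper's, in that it makes the stochastic-domination step (handling $\PP(Y_i=1)\geq p$ rather than $=p$) and the pigeonhole intersection explicit.
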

\begin{proof}
	By Chernoff's bound, with probability at least
	$1-\exp(-\frac{1}{2p}m(p-\frac{1}{2})^2)$, the estimate holds:
	$$\left|\{i:\|x_i-\bar x\|\leq \varepsilon\}\right|>\frac{m}{2}.$$
	In particular, there exists an index $i^*$ satisfying \eqref{eqn:index_want}
	Fix such an index $i^*$.
Clearly, there must exist another index $j$ satisfying $x_j\in B_{\varepsilon}(\bar x)\cap  B_{2\varepsilon}(x_{i^*})$. We therefore conlude $\|x_{i^*}-\bar x\|\leq \|x_{i^*}-x_j\|+\|x_j-\bar x\|\leq 3\varepsilon$. This completes the proof.
\end{proof}

\section{Proofs from Section~\ref{sec:examples}}

\subsection{Proof of Theorem~\ref{thm:phase_params}}\label{appendix:thm:phase_params}
The equality $\cX^*=\{\pm \bar x\}$ and sharpness follows along similar lines as in \cite[Proposition 4]{duchi_ruan_PR} and~\cite[Lemma B.8]{davis2017nonsmooth}. We sketch a quick argument for completeness.
Fix $x \in \RR^d$ throughout the proof.
 Let $\hat f(x, z) = | (a^Tx)^2 - (a^T\bar x)^2|$ denote the ``outlier-free'' loss function and set $\hat f(x):=\EE[\hat f(x, z)]$. Setting $v := \frac{ x- \bar x}{\|x - \bar x\|}$  and $ w := \frac{ x + \bar x}{\|x + \bar x\|}$, we have
\begin{align*}
\hat f(x, z) = | \dotp{a, x - \bar x} \dotp{a, x + \bar x}| = \|x - \bar x \| \| x+ \bar x\| |\dotp{a, v} \dotp{a, w} |.
\end{align*}
Therefore, we deduce
$$
\hat f(x) := \EE\left[ \hat f(x, z) \right]  \geq \tilde \mu\|x - \bar x\| \|x + \bar x\| \geq \tilde \mu \|\bar x\| \cdot \dist(x, \{\pm \bar x\}).
$$
Now, using this bound, we find that
\begin{align*}
f(x) - f(\pm \bar x) &= (1- \pfail)(\hat f(x) - \hat f(\bar x)) + \pfail \EE_{a, \xi} \left[ |(a^Tx)^2 - (a^T \bar x)^2 - \xi | - |\xi| \right]\\
&\geq (1 - \pfail) \hat f(x) - \pfail \EE_a\left[ |(a^T x)^2 - (a^T \bar x)^2|\right]\\
&= (1-2\pfail) \hat f(x) \geq (1-2\pfail)\tilde \mu \|\bar x\| \cdot \dist(x, \{\pm \bar x\}).
\end{align*}
In particular, we deduce the equality $\cX^*=\{\pm\bar x\}$ and the sharpness estimate~\ref{assumption:sharp} with  $\mu=(1-2\pfail)\tilde \mu \|\bar x\|$.
Now we estimate the parameters of the models.

 We begin with an estimate of $\quadapprox$. 
To that end, fix $y \in \RR^d$.  Then, using the expansion
$
\dotp{a, y}^2 = \dotp{a, x}^2 + 2\dotp{a, x}\dotp{a, y -x} + \dotp{a, y - x}^2,
$
we find that for any $z \in \Omega$, we have
\begin{align}\label{eq:pl_phase_lb}
f(y, z) &= | (a^Ty)^2 - ((a^T\bar x)^2 + \bern \cdot \xi)|\notag \\
&= | \dotp{a, x}^2 + 2\dotp{a, x}\dotp{a, y -x} + \dotp{a, y - x}^2 - ((a^T\bar x)^2 + \bern \cdot \xi)|\notag \\
&\geq f^{pl}_x(y, z) - \dotp{a, y- x}^2 \notag.
\end{align}
We use this inequality to estimate $\quadapprox$ for each of the models.
Let us analyze each of the models in turn:
\begin{itemize}
\item[] {(\bf prox-linear)} We have,
$
\EE\left[f_x^{pl}(y, z)\right]\leq\EE\left[f(y, z) +  \dotp{a, y- x}^2\right] \leq f(y) + \weakphase \|y - x\|^2.
$
\item[] {(\bf subgradient)}  By inspection, we have $G(x, z) \in \partial f_x^{pl}(x,z)$. Thus, we have
$$
f_x^{s}(y, z) = f_x^{pl}(x, z) + \dotp{ G(x, z), y - x} \leq f_x^{pl}(y, z) \leq f(y, z) + \dotp{a, y- x}^2,
$$
and consequently,
$
\EE\left[ f_x^{s}(y, z)\right] \leq f(y) + \tilde\eta\|y - x\|^2.
$
\item[] {(\bf clipped Subgradient)} As before, we have
$$
\max\{f_x^{s}(y, z), 0\} = \max\{f_x^{pl}(x, z) + \dotp{ G(x, z), y - x}, 0\} \leq \max\{f_x^{pl}(y, z), 0\} \leq f(y, z) + \dotp{a, y- x}^2,
$$
and consequently, we have
$
\EE\left[ f^{cl}_x(y, z)\right] \leq f(y) + \weakphase\|y - x\|^2.
$
\end{itemize}
Therefore in all three cases, we have $\quadapprox = 2\weakphase$.

Now we analyze $L(x, z)$. Any subgradient of any of the models evaluated at a point $x \in \RR^d$ is of the form
$
v = 2s \dotp{a, x} a \text{ for some $s \in [-1, 1]$}.
$
Consequently, in all three cases, we have
$
\min_{v\in \partial f_x(x,z)} \|v\|\leq 2|\dotp{a, x}|\|a\| = L(x,z),
$
as desired.

\subsection{Proof of Theorem~\ref{thm:phase_params}}\label{appendix:thm:blind_params}

Throughout the proof, let $(x, y), (\hat x, \hat y) \in \cX$. Let $\hat f((x, y), z) = | \dotp{\ell,  x}\dotp{r,  y} - \dotp{\ell, \bar x}\dotp{r, \bar y}|$ denote the outlier free objective and notice that with $M =  \frac{x y^T - \bar x \bar y^T}{\| x y^T - \bar x \bar y^T\|}$, we have
\begin{align*}
\hat f((x, y), z) = | \ell^T( x y^T - \bar x \bar y^T) r| = \|x y^T - \bar x \bar y^T\|_F |\ell^T M r|.
\end{align*}
Now taking into account~\cite[Proposition 4.2]{charisopoulos2019composite}, we have
$$
\|xy^T - \bar x \bar y^T\| \geq  \frac{\sqrt{D}}{2\sqrt{2}(\nu + 1)}\dist((x,y), \cX^\ast).
$$
Thus, it follows that
$$
\hat f(x, y) := \EE\left[ \hat f((x, y), z) \right] = \|x y^T - \bar x \bar y^T\|_F \EE\left[ |a^T M a|\right] \geq  \frac{\tilde \mu\sqrt{D}}{2\sqrt{2}(\nu + 1)}\dist((x, y), \cX^\ast).% \geq \tilde \mu \|\bar x\| \cdot \dist(x, \cX^\ast).
$$
Finally, using this bound, we find that for any $\alpha \neq 0$ that
\begin{align*}
&f(x,y) - f( \alpha \bar x, (1/\alpha) \bar y)  \\
&= (1- \pfail)(\hat f(x, y) - \hat f( \alpha \bar x, (1/\alpha) \bar y )) + \pfail \EE_{\ell, r, \xi} \left[ | \dotp{\ell,  x}\dotp{r,  y} - \dotp{\ell, \bar x}\dotp{r, \bar y}  -  \xi | - |\xi| \right]\\
&\geq (1 - \pfail) \hat f(x,y) - \pfail \EE_{\ell, r}\left[ | \dotp{\ell,  x}\dotp{r,  y} - \dotp{\ell, \bar x}\dotp{r, \bar y}|\right]\\
&= (1-2\pfail) \hat f(x,y) \geq  \frac{\tilde \mu(1-2\pfail)\sqrt{D}}{2\sqrt{2}(\nu + 1)}\dist((x,y), \cX^\ast).
\end{align*}
This proves sharpness. Now we estimate the parameters of the models.

Let us begin with $\quadapprox$. To that end, we observe that with $M = \frac{(y - \hat y)(x - \hat x)^T}{\|(y - \hat y)(x - \hat x)^T\|}$, we have
\begin{align*}
|f((\hat x, \hat y), z) - f^{pl}_{(x, y)}((\hat x, \hat y), z)| &\leq |\ell^T(\hat x \hat y^T - x y^T - x(\hat y - y) - y(\hat x - x)^T) r| \\
&= |\ell^T((y - \hat y)(x - \hat x)^T)r| \\
&= |\ell^T M r| \|y-\hat y\|\| x - \hat x\| \\
&\leq \frac{|\ell^T M r|}{2} \left(\|y-\hat y\|^2 + \| x - \hat x\|^2\right).
\end{align*}
We use this inequality to establish the weak quadratic approximation property for each of the models.
Let us analyze each of the models in turn:
\begin{itemize}
\item[] {\bf(Prox-linear)} Taking expectations, we have
$$
\EE\left[ f^{pl}_{(x,y)}((\hat x, \hat y), z) \right] \leq f(\hat x, \hat y) + \frac{\weakphase}{2} \left(\|y-\hat y\|^2 + \| x - \hat x\|^2\right).
$$
\item[] {\bf(Subgradient)} By inspection, the inclusion holds $G((x, y), z) \in \partial f_{(x, y)}^{pl}((x, y), z) $. Therefore,
$$
f_{(x, y)}^{s}((\hat x, \hat y), z) = f_{(x, y)}^{pl}((x, y), \data) + \dotp{ G((x,y), z), (\hat x, \hat y) - (x, y)} \leq f_{(x, y)}^{pl}((\hat x, \hat y),\data),
$$
and consequently
$
\EE\left[ f_{(x, y)}^{s}((\hat x, \hat y), z) \right] \leq \EE\left[ f_{(x, y)}^{pl}((\hat x, \hat y), \data) \right] \leq f(\hat x, \hat y) +   \frac{\weakphase}{2} \left(\|y-\hat y\|^2 + \| x - \hat x\|^2\right).
$
\item[] {\bf(Clipped Subgradient)} As before, we have
$$
 \max\{f_{(x, y)}^{s}((\hat x, \hat y), z), 0\} = \max\{f_{(x, y)}^{pl}((x, y), \data)  + \dotp{ G((x,y), z), (\hat x, \hat y) - (x,y)}, 0\} \leq   f_{(x, y)}^{pl}((\hat x, \hat y),z),
$$
and consequently
$
\EE\left[ f_{(x, y)}^{cl}((\hat x, \hat y), z)\right] \leq \EE\left[ f_{(x, y)}^{pl}((\hat x, \hat y), \data)\right] \leq  f(\hat x, \hat y) +   \frac{\weakphase}{2} \left(\|y-\hat y\|^2 + \| x - \hat x\|^2\right).
$
\end{itemize}
Therefore in all three cases, we have $\quadapprox = \weakphase$.

Now we analyze $L((x, y), z)$. Any subgradient of any of the models evaluated at the point $(x, y)$ is of the form
$
v = (\dotp{r, y} \ell, \dotp{\ell, x} r)s  \text{ for some $s \in [-1, 1]$}.
$
Therefore,
$
\min_{v\in \partial f_x(x,z)}\|v\|\leq \sqrt{\dotp{\ell, x}^2 \|r\|^2 + \dotp{r, y}^2\|\ell\|^2} \leq L((x, y),z),
$ as desired.

Finally, the bound on $\lipsymb$ follows since
$$
\lipsymb^2 \leq \sup_{(x, y) \in \cT_2} \EE\left[L((x,y), z)^2\right] \leq \sup_{(v, w) \in \sphere^{d_1 - 1}  \times \sphere^{d_2 - 1} } \nu^2D \EE\left[L((v,w), z)^2\right] \leq \nu^2D \tilde \lipsymb^2,
$$
as desired.

\section{Sharpness and identifiability}\label{sec:sharpness_identif}
In this section, we explain that local sharp growth of a function $f$ relative to a set $\cS$ is equivalent to $\cS$ be an ``active manifold'' for $f$ locally around its minimizer. This equivalence is in essence well-known, though we have been unable to find a formal statement. To illustrate on a simple example, consider the function $f(x,y)=x^2+|y|$ and the set $\cS=\R\times \{0\}$. Notice that $f$ satisfies two geometric properties. On one hand, $f$ grows sharply (at least linearly) as one moves away from $\cS$. On the other hand, $\cS$ is ``active'' or ``identifiable'' in the sense that the subgradients of $f$ are uniformly bounded away from zero outside of $\cS$. We will see that these two geometric properties are essentially equivalent. To formalize the notion of an ``active set'', we follow the work \cite{ident}, which expands on the earlier papers of Lewis \cite{lewis_active} and Wright \cite{MR1227547}.

Throughout, we use the standard definitions and notation of variational analysis, as set out in the monographs \cite{RW98,Mord_1,penot_book}. Namely, consider  a function $f\colon\R^d\to\overline{\R}$ and a point $x$, with $f(x)$ finite. The {\em Fr\'{e}chet subdifferential}, denoted $\partial f(x)$, consists of all vectors $v\in \R^d$ satisfying
$$
f(y)\geq f(x)+\langle v,y-x\rangle+o(\|y-x\|)\quad\textrm{ as }\quad y\to x.$$
The {\em limiting subdifferential}, denoted $\partial_L f(x)$,  consists of all vectors $v\in \R^d$ for which there exist sequences $x_i\in\R^d$ and $v_i\in \partial f(x_i)$ satisfying $(v_i,f(x_i),v_i)\to ( x,f( x), v)$. Following \cite{prox_reg}, we say that $f$ is {\em prox-regular at $\bar x$ for $\bar v\in \partial_L f(\bar x)$} if there exist real $\epsilon, \rho>0$ such that the estimate
$$f(y)\geq f(x)+\langle v,y-x\rangle-\frac{\rho}{2}\|y-x\|^2,$$
holds for any  $x,y\in\R^d$ and $v\in \partial_L f(x)$ satisfying $\max\{\|y -
\bar x\|, \|x-\bar x\|, \|v-\bar v\|, |f(x)-f(\bar x)|\}<\epsilon$. In particular,
weakly convex functions are prox-regular.

The following is the formal definition of an identifiable (or active) manifold.

\begin{definition}[Identifiable manifold]
	{\rm
		Consider a closed function $f\colon\R^d\to\R\cup\{+\infty\}$. We call a set
		$\cS$ an {\em identifiable manifold at $\bar x$ for $\bar v\in \partial f(\bar x)$}  if the following properties hold.
		\begin{enumerate}
			\item {\bf (smoothness)} The set $\cS$ is a $C^2$-smooth manifold around $\bar x$ and the restriction $f\big|_{\cS}$ is $C^2$-smooth near $\bar x$.
			\item {\bf (finite identification)} For any sequences $(x_i,f(x_i),v_i)\to(\bar x, f(\bar x),\bar v)$ with $v_i\in \partial_L f(x_i)$, the points $x_i$ must all lie in $\cS$ for all sufficiently large indices $i$.
		\end{enumerate}}
	\end{definition}

	Let us first observe that under a very mild condition on the function $f$, identifiability at a critical points implies local sharp growth.
		\begin{thm}[Identification implies sharpness]
		Consider a closed function $f\colon\R^d\to\overline{\R}$ and suppose that
		a closed set $\cS$ is an identifiable manifold at $\bar x$ for $0\in  \partial f(\bar x)$. Then there exist real $\epsilon, \mu>0$ satisfying
		$$f(x)\geq f(\proj_{\cS}(x))+\mu\cdot\dist(x,\cS) \qquad\textrm{for all }x\in B_{\epsilon}(\bar x).$$
	\end{thm}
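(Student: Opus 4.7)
The plan is to bootstrap finite identification into a uniform limiting-subgradient lower bound for $f$ off $\cS$, and then convert this slope bound into sharp growth by studying the lifted lower-semicontinuous (lsc) function $\phi(z):=f(z)-f(\proj_{\cS}(z))$ on a tubular neighborhood of $\bar x$.

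First I would prove the following slope bound by contradiction: there exist $\epsilon_1,\mu>0$ such that $\|v\|\geq\mu$ for every $v\in\partial_L f(x)$ whenever $x\in B_{\epsilon_1}(\bar x)\setminus\cS$ and $f(x)\leq f(\bar x)+\epsilon_1$. Negating this statement produces sequences $x_n\to\bar x$ with $x_n\notin\cS$, $f(x_n)\leq f(\bar x)+1/n$, and $v_n\in\partial_L f(x_n)$ of vanishing norm. Lower semicontinuity of $f$ gives $\liminf f(x_n)\geq f(\bar x)$, and combined with the upper bound on $f(x_n)$ this forces $f(x_n)\to f(\bar x)$, so $(x_n,f(x_n),v_n)\to(\bar x,f(\bar x),0)$; the finite identification hypothesis then forces $x_n\in\cS$ eventually, a contradiction. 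The complementary case, in which no sequence $x_n\to\bar x$ off $\cS$ has $f(x_n)\to f(\bar x)$, means $f\geq f(\bar x)+c$ on a punctured neighborhood off $\cS$, and the desired sharp growth is immediate with $\mu$ proportional to $c$.

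For the second step, I would shrink $\epsilon_1$ so that $\proj_{\cS}$ is single-valued and $C^1$ on $B_{\epsilon_1}(\bar x)$, available because $\cS$ is $C^2$. Then $g(z):=f(\proj_{\cS}(z))$ is $C^2$-smooth with $\nabla g(\bar x)=(\nabla\proj_{\cS}(\bar x))^\top\nabla(f|_\cS)(\bar x)=0$, since $0\in\partial f(\bar x)$ forces $\nabla(f|_\cS)(\bar x)=0$. Subdifferential calculus for a sum with a smooth function yields $\partial_L\phi(z)=\partial_L f(z)-\{\nabla g(z)\}$, so the Step~1 bound upgrades to $\|\partial_L\phi(z)\|\geq\mu/2$ on a smaller ball $B_{\epsilon_2}(\bar x)\setminus\cS$ on which $\|\nabla g\|\leq\mu/2$. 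An Ekeland variational argument---applicable because $\phi$ is lsc and, using lsc of $f$, bounded below by an arbitrarily small negative constant on a sufficiently small ball---rules out any point where $\phi<0$: the Ekeland minimizer must lie off $\cS$ (since $\phi\equiv 0$ on $\cS$) and would otherwise carry a subgradient of norm strictly less than $\mu/2$, contradicting the slope bound. Since $\phi\geq 0$ locally with zero set coinciding with $\cS$, the classical slope-to-error-bound principle (e.g.\ Az\'e--Corvellec) converts the $(\mu/2)$-slope bound into $\phi(z)\geq(\mu/2)\dist(z,\cS)$, which is the sharp growth inequality.

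The main obstacle is the joint calibration of neighborhoods and Ekeland parameters so that (i) the Step~1 subgradient bound applies at the Ekeland minimizer $\hat z$, which requires $f(\hat z)\leq f(\bar x)+\epsilon_1$ and is ensured by shrinking the ball and using the $C^2$-Taylor estimate $g(z)=f(\bar x)+O(\|z-\bar x\|^2)$ together with an lsc upper bound on $\phi$ near $\bar x$; (ii) the Ekeland radius $\lambda$ stays within this ball while the defect $\phi(z_0)-\inf\phi$ remains small enough to satisfy $\epsilon/\lambda<\mu/2$; and (iii) the chain rule for $\phi=f-g$ holds throughout, which is standard once $g$ is smooth. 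A brief two-case split also handles points outside the sublevel set $\{f\leq f(\bar x)+\epsilon_1\}$ when reading off the final error bound: on such points $f(x)-f(\proj_{\cS}(x))$ already exceeds any prescribed multiple of $\dist(x,\cS)$ once the ambient radius is small.
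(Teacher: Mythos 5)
Your route is genuinely different from the paper's. The paper never works with subgradients off $\cS$: it invokes two structural facts about identifiable manifolds from the reference [ident] --- that $f$ is prox-regular at $\bar x$ for $0$, and that at points $z\in\cS$ near $\bar x$ the subdifferential is ``fattened'' in normal directions, $v+\epsilon\left(\sphere^{d-1}\cap N_{\cS}(z)\right)\subset\partial f(z)$ --- and then simply writes the prox-regularity inequality at $\hat x=\proj_{\cS}(x)$ with the fattened subgradient pointing toward $x$. You instead extract, directly from the finite-identification property, a uniform lower bound on $\|\partial_L f\|$ at points \emph{off} $\cS$ and convert it into growth of $\phi=f-f\circ\proj_{\cS}$ via Ekeland and the slope-to-error-bound principle. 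This buys independence from the fattening result and from prox-regularity, at the price of the error-bound machinery; your Step 1 contradiction argument and the reduction to $\phi$ (with $\nabla g(\bar x)=0$ and the exact sum rule for a $C^1$ perturbation) are correct.

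The one genuine soft spot is the lower bound on $\inf\phi$ that you feed into Ekeland. You justify ``$\phi$ bounded below by an arbitrarily small negative constant on a sufficiently small ball'' by lower semicontinuity of $f$. That only gives $\inf_{B_\delta(\bar x)}\phi\geq -c(\delta)$ with $c(\delta)\to 0$, whereas your own condition (ii), namely $\epsilon/\lambda<\mu/2$ with $\lambda\lesssim\delta$, forces $c(\delta)\lesssim\mu\delta$; a rate like $c(\delta)\sim\sqrt{\delta}$ defeats the argument, and trying to escape it by iterating the decrease principle pushes you out of the fixed neighborhood where the Step 1 slope bound is valid. The estimate you actually need, $c(\delta)=o(\delta)$, is available but comes from the hypothesis $0\in\partial f(\bar x)$ (Fr\'echet), which gives $f(z)\geq f(\bar x)+o(\|z-\bar x\|)$, combined with $g(z)=f(\bar x)+o(\|z-\bar x\|)$ since $\nabla g(\bar x)=0$; lower semicontinuity alone is not enough, and this is where the standing assumption $0\in\partial f(\bar x)$ enters essentially (without it the statement is false). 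Separately, after proving $\phi\geq 0$ you should record that $[\phi\leq 0]$ coincides with $\cS$ locally --- a point $z\notin\cS$ with $\phi(z)=0$ would be a local minimizer of $\phi$, so $0\in\partial\phi(z)$, contradicting your slope bound --- since the error-bound principle only bounds $\phi$ below by the distance to $[\phi\leq 0]$, not to $\cS$. With these two repairs your proof goes through.
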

	\begin{proof}
		First, we record an immediate consequence of~\cite[Proposition
		10.12]{ident}. Namely, there exists $\epsilon > 0$ satisfying the
		following. For all $z\in B_{\epsilon}(\bar x)\cap \cS$  and $v\in \partial_L f(z)\cap B_{\epsilon}(0)$, the inclusion holds:
		$$v+\epsilon \left(\mathbb{S}^{d-1}\cap N_{\cS}(z)\right) \subset \partial f(z).$$
		Next recall that since $\cS$ is a $C^2$-smooth manifold, every point $x$ near $\bar x$ admits a unique nearest-point projection onto $\cS$, characterized by the inclusion $x-\proj_{\cS}(x)\in N_{\cS}(\proj_S(x))$.
		For any point $x$ near $\bar x$, set $\hat x=\proj_{\cS}(x)$.
		Using~\cite[Proposition 10.11]{ident}, we deduce that $f$ is
		prox-regular at $\bar{x}$ for $\bar{v} = 0$.
		Consequently,
		there exist $\epsilon', \gamma', \rho>0$ such that
		\begin{equation}\label{eqn:prox_reg_damn}
		f(x)\geq f(\hat x)+\langle v ,x-\hat x\rangle-\frac{\rho}{2}\|x-\hat x\|^2
		\end{equation}
		for any $x\in B_{\epsilon'}(\bar x)$ and $v\in B_{\gamma}(0)\cap\partial_L f(\hat x)$. Notice that since the subdifferential of $f$ is inner-semicontinuous relative to $\cS$ at $\bar x$ for $\bar v=0$ \cite[Proposition 10.2]{ident},
		decreasing $\epsilon'$ we may ensure that $B_{\gamma}(0)\cap\partial_L f(\hat x)$ is nonempty for all $x\in B_{\epsilon'}(\bar x)$.
		We therefore deduce for every $x\in B_{\epsilon'}(\bar x)$ the estimate:
		\begin{align*}
		f(x)&\geq  f(\hat x)+\langle v+\epsilon\frac{x-\hat x}{\|x-\hat x\|},x-\hat x\rangle-\frac{\rho}{2}\|x-\hat x\|^2\\
		&\geq f(\hat x)+\epsilon\cdot\dist_{\cS}(x)-\|v\|\dist_{\cS}(x)-\frac{\rho}{2}\dist_{\cS}^2(x)\\
		&=f(\hat x)+\left(\epsilon-\gamma-\frac{\rho\epsilon'}{2}\right) \dist_{\cS}(x).
		\end{align*}
		Decreasing $\gamma$ and $\epsilon'$, if necessary, completes the proof.
	\end{proof}

	We next prove the converse, namely that a function always grows sharply away from its identifiable manifolds.
		\begin{thm}[Sharpness implies identification]
		Consider a closed function $f\colon\R^d\to\overline{\R}$ that is prox-regular at a point $\bar x$ for $0\in \partial_L f(\bar x)$.
		Suppose that there is a closed set $\cS$ containing $\bar x$ and real $\epsilon, \mu>0$ satisfying
		$$f(x)\geq \min_{z\in \proj_{\cS}(x)}f(z)+\mu\cdot\dist(x,\cS) \qquad\textrm{for all }x\in B_{\epsilon}(\bar x).$$
		Then for any sequences $(x_i,f(x_i),v_i)\to(\bar x, f(\bar x),\bar v)$ with $v_i\in \partial_L f(x_i)$, the points $x_i$ must all lie in $\cS$ for all sufficiently large indices $i$.
	\end{thm}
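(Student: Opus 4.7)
The plan is to argue by contradiction. Suppose, after passing to a subsequence (still indexed by $i$), that $x_i \notin \cS$ for every $i$, and note that eventually $x_i \in B_\epsilon(\bar x)$ and $f(x_i)$ is finite. The strategy is to evaluate the prox-regularity inequality at base point $x_i$ and free argument equal to a near-minimizing projection $\hat x_i \in \proj_\cS(x_i)$, and then cancel $f(\hat x_i)$ against the sharpness bound to contradict $\mu > 0$.

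First I would extract a suitable projection. Since $f(x_i)$ is finite, the sharpness hypothesis gives $\inf_{z \in \proj_\cS(x_i)} f(z) \leq f(x_i) - \mu\dist(x_i,\cS)$, so I can pick $\hat x_i \in \proj_\cS(x_i)$ with
$$f(\hat x_i) \leq f(x_i) - \tfrac{\mu}{2}\dist(x_i,\cS).$$
Because $\bar x \in \cS$, we have $\|\hat x_i - x_i\| = \dist(x_i,\cS) \leq \|x_i - \bar x\| \to 0$, and therefore $\hat x_i \to \bar x$.

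Next I would invoke prox-regularity. Fix $\epsilon',\rho > 0$ from the definition of prox-regularity of $f$ at $\bar x$ for $\bar v = 0$. Since $(x_i, f(x_i), v_i) \to (\bar x, f(\bar x), 0)$ and $\hat x_i \to \bar x$, all the radius requirements in the prox-regularity definition are met for large $i$. Applying that inequality with base point $x_i$, free argument $\hat x_i$, and subgradient $v_i \in \partial_L f(x_i)$ gives
$$f(\hat x_i) \geq f(x_i) + \langle v_i, \hat x_i - x_i\rangle - \tfrac{\rho}{2}\|\hat x_i - x_i\|^2.$$
Substituting the bound from the previous step for $f(\hat x_i)$ on the left and applying Cauchy--Schwarz yields
$$0 \geq \tfrac{\mu}{2}\dist(x_i,\cS) - \|v_i\|\,\dist(x_i,\cS) - \tfrac{\rho}{2}\dist(x_i,\cS)^2.$$
Dividing through by $\dist(x_i,\cS) > 0$ and letting $i \to \infty$ (using $\|v_i\| \to 0$ and $\dist(x_i,\cS) \to 0$) produces the contradiction $0 \geq \tfrac{\mu}{2}$.

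The chief technical obstacle is simply verifying that both the sharpness hypothesis and the prox-regularity hypotheses are simultaneously in force along the sequence: sharpness requires $x_i \in B_\epsilon(\bar x)$, while prox-regularity demands, in addition, that $\hat x_i$ be near $\bar x$ and that $f(x_i)$ and $v_i$ be near their limits. Each of these is automatic from the assumed convergence together with the fact that $\bar x \in \cS$ forces $\hat x_i \to \bar x$. A minor subtlety is that the infimum in the sharpness condition may not be attained, but the slack $\tfrac{\mu}{2}\dist(x_i,\cS)$ built into the choice of $\hat x_i$ absorbs this without affecting the asymptotics.
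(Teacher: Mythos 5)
Your proof is correct and follows essentially the same route as the paper's: both combine the sharpness lower bound on $f(x_i)-f(\hat x_i)$ with the prox-regularity inequality evaluated at the projection $\hat x_i$, apply Cauchy--Schwarz, and divide by $\dist(x_i,\cS)>0$ to contradict $\mu>0$. The only cosmetic difference is that the paper packages the argument as a uniform bound $\dist(0,\partial_L f(x))\geq \mu/2$ for points near $\bar x$ outside $\cS$, whereas you argue directly along the sequence; your extra care with a near-minimizing projection (to cover possible non-attainment of the minimum over $\proj_{\cS}(x_i)$) is harmless and slightly more robust.
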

	\begin{proof}
		Let $\epsilon, \mu>0$ be the constants in the assumptions of the theorem.
		From the definition of prox-regularity, we deduce that there exist real $\epsilon', \rho>0$ such that the estimate
		$$f(y)\geq f(x)+\langle v,y-x\rangle-\frac{\rho}{2}\|y-x\|^2,$$
		holds for any  $x,y\in\R^d$ and $v\in \partial_L f(x)$ satisfying $\max\{\|x-\bar x\|, \|v\|, |f(x)-f(\bar x)|\}<\epsilon'$. Shrinking $\epsilon'$, we may ensure $0<\epsilon'<\min\{\epsilon,\frac{\mu}{\rho}\}$. Consider now any point $x\in B_{\epsilon'}(\bar x)\setminus \cS$ with $|f(x)-f(\bar x)|<\epsilon'$ and $\dist(0,\partial_L f(x))<\epsilon'$. We will show that the estimate $\dist(0,\partial_L f(x))\geq \frac{\mu}{2}$ holds, thereby completing the proof.
		To verify this estimate, let $v\in \partial f(x)$ have minimal norm and let $\hat x\in \proj_{\cS}(x)$ achieve $\min_{z\in \proj_{\cS}(x)} f(z)$. We then deduce
		$$\mu\cdot\dist(x,\cS)\leq f(x)-f(\hat x)\leq \langle v, x-\hat x\rangle+\frac{\rho}{2}\dist^2(x,\cS).$$
		Using the Cauchy-Schwarz inequality, we therefore conclude
		$\|v\|\geq \mu-\frac{\rho}{2}\dist(x,\cS)\geq \frac{\mu}{2}$. The result follows.
	\end{proof}

%%%%%%%%%
	
\end{document}